\newtheorem{thm}{Theorem}[section]
 \newtheorem{lem}[thm]{Lemma}
 \newtheorem{prop}[thm]{Proposition}
\theoremstyle{remark}
\newtheorem{rem}[thm]{Remark}
\newtheorem*{rem*}{Remark}
\theoremstyle{definition}
\title 
{Stability analysis for
the Implicit-Explicit discretization of the Cahn-Hilliard equation}
\author
{Dong Li
\thanks
{Department of Mathematics, the Hong Kong University of Science \& Technology,
Clear Water Bay, Kowloon, Hong Kong. Email: {mpdongli@gmail.com}.
The author's work was supported in part by Hong Kong RGC grant GRF 16307317
 and 16309518. }\qquad
{Tao Tang}
\thanks{SUSTech International Center for Mathematics, Shenzhen, China; and
    Division of Science and Technology, BNU-HKBU United International College,
    Zhuhai, Guangdong Province, China.
    Email: tangt@sustech.edu.cn. This author's work is partially supported by the NSF of China under grant number 11731006 and the science challenge project (No. TZ2018001).}
}
\begin{document}
\maketitle
\begin{abstract}
Implicit-Explicit methods have been widely used for the efficient numerical simulation of phase field
problems such as the Cahn-Hilliard equation or thin film type equations. Due to the lack of maximum
principle and stiffness caused by the effect of small dissipation coefficient, most existing theoretical analysis relies on adding additional stabilization terms, mollifying the nonlinearity or introducing auxiliary variables which implicitly either changes the structure of the problem or trades accuracy for stability
in a subtle way. In this work 
we introduce a  robust theoretical framework to analyze directly
the stability of the standard implicit-explicit approach 
without stabilization or any other modification. We take 
 the Cahn-Hilliard equation as a model case and prove energy stability under natural time step constraints which are optimal
with respect to energy scaling.  These settle several questions which have been open
since the  work of Chen and Shen \cite{CS98}.

\end{abstract}
\section{Introduction}
The Cahn-Hilliard (CH) equation was first introduced by Cahn and Hilliard in
\cite{CH58} to describe the complicated phase separation and coarsening phenomena
in non-uniform systems such as glasses, alloys and polymer mixtures.
In this work we are concerned with the numerical solutions for  the Cahn-Hilliard 
equation in nondimensionalized form as
\begin{align} \label{1}
\begin{cases}
\partial_t u  = \Delta ( -\nu \Delta u +f (u) ), \quad (t,x) \in \Omega \times (0,\infty) \\
u \Bigr|_{t=0} =u_0,
\end{cases}
\end{align}
where $u=u(t,x)$ is a real-valued function which represents the concentration difference
in a binary system, and $\nu>0$ is usually called mobility coefficient. The function
$f(u)$ is taken as the derivative of a standard double well potential:
\begin{align*}
f(u)=u^3- u = F^{\prime}(u), \quad F(u) = \frac 14 (u^2-1)^2.
\end{align*}
Note that with this choice the equation \eqref{1} is invariant under the sign change
$u \to -u$ which is natural since $u$ corresponds to the difference of concentrations.
The minima of the double well potential are $u=\pm 1$ which typically correspond to
the formation of domains. The length scale of the transition regions between domains is
typically proportional to $\sqrt{\nu}$. For simplicity we shall fix the spatial domain $\Omega$ in \eqref{1} as the usual $1$-periodic torus $\mathbb T^d=\mathbb R^d / \mathbb Z^d
=[-\frac 12,\frac 12)^d$ in physical dimensions $d\le 3$. With this it is convenient to work with
the  Fourier basis $(e^{2\pi i k\cdot x})_{k\in \mathbb Z^d}$ and this is the convention we shall adopt for the Fourier
inversion formulae to be used throughout this paper. For smooth solutions, the mass conservation
law takes the form
\begin{align*}
\frac d {dt} M(t) = \frac d {dt} \int_{\Omega} u(t,x) dx  \equiv 0.
\end{align*}
In particular $M(t) \equiv 0$ if the initial total mass is zero. If $M(t) \equiv c$ for some
$c$ nonzero, then by a change of variable $u\to u+c$ one can work with a modified
nonlinearity $f(u+c)$ in \eqref{1} and the corresponding analysis can be adjusted suitably.
Therefore throughout this work we will only consider mean zero initial data
for simplicity. 
As is well known the system \eqref{1} is a gradient flow of a Ginzburg-Landau type energy
functional $\mathcal E(u)$ in $H^{-1}$, i.e.
\begin{align*}
& \partial_t u= - \frac {\delta \mathcal E} {\delta u } \Bigr|_{H^{-1}} =
\Delta (\frac {\delta \mathcal E} {\delta u} ), \\
\end{align*}
where $\frac {\delta \mathcal E}{\delta u} \Bigr|_{H^{-1}}$ , $\frac {\delta \mathcal E}
{\delta u}$ denote the standard variational derivatives in $H^{-1}$ and $L^2$ respectively, and
\begin{align*}
&\mathcal E(u)= \int_{\Omega} ( \frac 12 \nu |\nabla u|^2 + F(u) ) dx
=\int_{\Omega} ( \frac 12 \nu |\nabla u|^2 + \frac 14 (u^2-1)^2 ) dx.
\end{align*}
As such the basic energy conservation law takes the form
\begin{align*}
\frac d {dt} \mathcal E ( u(t) ) +
\| |\nabla|^{-1}  \partial_t u \|_2^2
=\frac d {dt} \mathcal E(u(t)) + \int_{\Omega}
| \nabla ( -\nu \Delta u + f(u ) ) |^2 dx =0.
\end{align*}
Here one should note that $\partial_t u$ has mean zero so that $|\nabla|^{-1}$ is certainly 
well-defined.  From energy conservation one immediately deduces monotonic energy decay and
a priori $\dot H^1$ bound of the solution as
\begin{align*}
&\mathcal E(u(t) ) \le \mathcal E(u(s)), \qquad \forall\, t\ge s;\\
& \| \nabla u(t) \|_2 \le \sqrt{\frac 2 {\nu}}  \mathcal E(u(t)) 
 \le \sqrt{\frac 2 {\nu}} \mathcal E(u_0 ), \qquad \forall\, t>0.
\end{align*}
 By a scaling analysis one can identify  the critical spaces for CH in 2D and 3D as
  $L^2$ and $\dot H^{\frac 12}$ respectively. Global wellposedness in $H^1$ and regularity of solutions then follow easily
 from the a priori $\dot H^1$ bound and standard arguments. As the primary goal of the CH model
 is to understand the physics and dynamics of spinodal decomposition (especially concerning
 the stages after quenching), a large body of existing
 mathematical analysis is naturally 
 devoted to the investigation of asymptotic behavior of solutions concerning
 coarsening, pattern formation, evolution of interfaces, stability and instability in various
 time regimes.  One should keep in mind that the dynamics of CH is quite complex and 
 takes place on a myriad of time scales ranging from short time scales $t= O(\sqrt {\nu})$
 on which the solution generally develops rich structures with many internal layers and sharp
 gradients   to metastable time scales $t= O(e^{\operatorname{const}/\sqrt{\nu}})$
 on which the unstable or super slow modes are fully developed and 
  coarsening starts to dominate. As a result the numerical
 simulation of CH can be quite stiff as one need to resolve layers of order $\sqrt{\nu}$ especially
 when $\nu\ll 1$. 

There exist some natural scaling transformations which lead to slightly 
different forms  of the CH system in the literature. A sample case considered
in Liu and Shen \cite{LShen13} can be expressed as
\begin{align} \label{1a}
\partial_s \tilde u= \alpha \Delta ( -\Delta \tilde u + \beta f(\tilde u) ),
\quad (s, y) \in (0,\infty) \times \Omega_L, 
\end{align}
where $\alpha>0$, $\beta>0$, $\tilde u= \tilde u(s, y)$, and $\Omega_L=[-\frac L2, \frac L2)^d$. 
Now make a change of variable
\begin{align*}
\tilde u(s,y) = u(\frac s {\lambda}, \frac y L) = u(t,x), \qquad \lambda =\frac {L^2}{\alpha\beta},
\end{align*}
then we can recast \eqref{1a} into our standard form as
\begin{align} \label{1b}
\partial_t u = \Delta ( - \nu \Delta u + f(u) ), \quad (t,x) \in (0,\infty) \times \mathbb T^d, 
\end{align}
where $\nu = \frac 1 {\beta L^2}$. If we denote the typical time step for \eqref{1a} as
$\Delta s$ and for \eqref{1b} as $\Delta t$, then apparently we have 
\begin{align*}
&\Delta t = \frac {\Delta s }{\lambda} = \Delta s \cdot  {\alpha \beta} \cdot {L^{-2} }, \qquad
\nu = \frac {1}{\beta} \cdot L^{-2}.
\end{align*}
We record these relations here so that one can  translate many existing numerical analysis results in the literature about
the system \eqref{1a} in terms of the system \eqref{1b} which is quite convenient for comparison
purposes.

On the numerical side, there is  by now an enormous body of literature on the 
simulation and analysis of the CH equation and related phase field
models (cf. \cite{DN91, CS98,CJPWW14,Du09,GHu11,HLT07, 
SY10,ZCST99, Xu18} 
and the references therein). A fundamental challenge is to design fast, efficient and
accurate numerical schemes which are robust and energy stable especially in the computationally
stiff small $\nu$ regime. Roughly speaking almost all existing numerical algorithms can be classified into
five categories with some possible overlaps. To elucidate the discussion below, it is useful to
recast \eqref{1} as a abstract and more general model:
\begin{align*}
\partial_t u= \mathcal L u + \mathcal N(u),
\end{align*}
where $\mathcal L$ denotes the linear operator and $\mathcal N(u)$ collects the nonlinear part.
In its most generality one should think of $\mathcal N (u)$ as a functional. For example we
write $G(u, \nabla u, \nabla^2 u,\cdots)$ (for some function $G$) as $\mathcal N(u)$. 
To ease the discussion we shall ignore completely the space discretization and focus 
momentarily on first order
in time discretization. Now for $n=0,1,2,\cdots$, denote by $u^n\approx u(n\Delta t)$ as the numerical solution at  time  $t= nk$ where
$\Delta t$ is the time step size. Then the following are the prototypical schemes
often considered in the literature:
\begin{align*}
\frac{u^{n+1}- u^n}{k}=
\begin{cases}
\mathcal L u^n + \mathcal N(u^n), \qquad &\text{(Explicit)}, \\
\mathcal L u^{n+1} + \mathcal N (u^{n+1}), \qquad &\text{(Fully implicit)}, \\
\mathcal L u^{n+1} + \mathcal N (u^n), \qquad &\text{(Semi-implicit/Implicit-Explicit)}, \\
\mathcal L u^{n+1} + \mathcal N_{\operatorname{I}}
(u^{n}, u^{n+1}), \qquad &\text{(Partially implicit)}, \\
\mathcal L u^{n+1} + \widetilde{ \mathcal N}_{\operatorname{I}}(u^n, u^{n+1})
+\mathcal S(u^n,u^{n+1}), \qquad &\text{(Stabilization)}.
\end{cases}
\end{align*}
In the above $\mathcal N_{\operatorname{ I}}$ represents a careful splitting/interpolation of the 
nonlinearity term using $u^n$ and $u^{n+1}$.
To ensure consistency it should satisfy $\mathcal N_{ \operatorname{I}} (u,u)=
\mathcal N (u)$. In practical algorithms such as convex splitting one often convexify the 
problem and choose
 $\mathcal N_{\operatorname{I}}(u^n,u^{n+1}) = \mathcal N_+(u
)-\mathcal N_-(u)=\mathcal N(u)$ where $\mathcal N_+$ and $\mathcal N_-$
are convex. The term $\mathcal S(u^n, u^{n+1})$ represents certain
carefully chosen additional stabilization terms which vanishes suitably fast as the time
step $k\to 0$. In the last one we use the general notation $\widetilde{ \mathcal N}_{\operatorname{I}}(u^n, u^{n+1})$
to include $\mathcal N(u^n)$ or $\mathcal N_{\operatorname{I}}(u^n,u^{n+1})$ as 
special cases.

As mentioned above, the first type in our classification is pure explicit methods such as forward
Euler in time and explicit treatment of the linear dissipation and nonlinearity.   For small systems and short time scales, 
these methods are speedy, efficient and relatively easy to implement. But due to the poor stability and low accuracy one
often has to employ very small time step and spatial grid size which puts a serious limitation for large
scale and long time simulations. The second is (fully) implicit schemes such as forward Euler
in time and fully implicit of both the linear dissipation and the nonlinearity.  The Crank-Nicolson 
(CN) and
modified Crank-Nilcolson type methods also fall into this category. A representative work for the
CH equation in this direction dates back to Du and Nicolaides \cite{DN91} which analyzed both a semi-discrete fully implicit  in space with continuous time and a modified CN 
(see also the work of Elliott and Stuart \cite{ES93} pp. 1644  for the idea
of using secant approximation)
fully discrete scheme 
for the 1D CH system with Dirichlet boundary conditions. The deficiency of fully implicit
methods is the severe restriction on the time step in order to ensure solvability and the expense
of Newton's method for which efficient preconditioning is often needed in practice. Besides, for the CN
type schemes the nonlinearity often has to be modified suitably in order to ensure energy decay.

The third category is the semi-implicit methods which treats the principal linear dissipation term implicitly
and the nonlinear term explicitly.  In the phase-field context such methods date back to the
 work of Chen and Shen \cite{CS98} in which a semi-implicit Fourier spectral method
was implemented on a Allen-Cahn system and a CH system. These methods are quite efficient and 
accurate and  observed to have  good stability properties  in practical numerical simulations. 
However due to the lack of maximum principle and stiffness caused by small viscosity coefficient 
a rigorous stability and error analysis was a longstanding open problem. To get around this issue many
stabilized methods have been developed over the past decades which we will discuss in more detail
in the fifth category below.

The fourth group in our classification contains partially implicit methods. 
  These are one of the most explored directions during the past decades. The most popular ones
  are the convex-splitting schemes (CSS) which have been developed in \cite{Eyre98a, Eyre98b, 
  Guan14a, Guan14b, Guan16, FTY15, CJPWW14} for
  the CH model, higher order models and related nonlocal versions. The advantages of a typical
   CSS  are two: 1) Unconditional energy stability with no stringent restriction on the time step;
   2) Guaranteed convergence of the Newton iteration and relatively easy solvability of the associated
   nonlinear system.  This is in stark contrast with a standard fully implicit scheme where very small
   time steps need to be taken in order to ensure energy stability.
   
    However recently Xu et al in 
   \cite{Xu18} discovered a surprising reformulation of many CSS and stabilized 
   schemes as a version of the fully implicit scheme with a proper time re-parametrization/rescaling.
   As such it was argued that these methods implicitly trade numerical accuracy for 
   stability.  The fifth category in our classification consists of stabilized or mollified methods. 
The basic idea of stabilization is to introduce an additional $O(\Delta t^p)$ (for a $p^{th}$-order
method) term to the numerical scheme to alleviate the time step constraint. These methods
were first developed in \cite{ZCST99} for a Cahn-Hilliard-Cook equation, 
\cite{HLT07} for CH and \cite{XT06} for epitaxial growth models. The work of \cite{HLT07} and
\cite{XT06} relies on some conditional $L^{\infty}$-bound of the numerical solution.
In \cite{SY10} Shen and
Yang considered a modified/mollified CH system with suitable Lipschitz truncation and proved
various stability results under such assumptions. Removing these conditional assumptions and
proving the unconditional energy stability for such stabilized methods were known as
the unconditional stability conjecture. Recently in a series of papers
\cite{LQT16, LQT17, LQ17, LQ173d} several new methods were developed to settle the
unconditional stability conjecture for the 2D and 3D CH systems, including both first order and second
order in time methods.  Developing upon the second-order scheme in \cite{LQ17}, Song and
Shu \cite{SShu17} recently constructed a new unconditionally stable second order stabilized 
semi-implicit local discontinuous Galerkin method for the CH equation. 
In another direction Shen et al (see \cite{SXY19} and the references therein) fashioned 
another novel form of stabilization which is based on the introduction of an auxiliary variable.
A nice feature of this novel workaround is that it can render unconditional energy stability more easily.
However a new challenging issue is how to navigate properly the dynamics of
the fictitious variable in order to minimize its deviation from the true
dynamics.  On the practical side in order for
  the fictitious variable to stay close to a constant value, one has to
monitor very carefully the fluctuations of the auxiliary variable and even adaptively adjust time
steps in practical simulations. 
For a more detailed account of this and other more recent
algorithms and developments, we refer to \cite{SXY19, Tang18, Xu18} and the
references therein.  

The comprehensive stability and error analysis in \cite{LQT17, LQ17, LQ173d} shows that
the incorporation of additional stabilization terms in the numerical schemes 
does increase the stability of the algorithm, however it also introduces undesirable approximation
errors which may deteriorate accuracy in the long run. This phenomenon is also inherently present in the auxiliary variable approach \cite{SXY19} and accords well with the point
of view advocated by Xu et al in \cite{Xu18} which shows that there exists a subtle and 
fundamental balance
between stability and accuracy.  All these naturally lead us to wonder whether the sole pursuit of
unconditional energy stability whilst losing accuracy is worthwhile, and perhaps one should 
look for some sort of conditional stability with affordable time step constraints, and more importantly without sacrificing accuracy too much. In this perspective a fundamental unsettled 
issue since the  work of Chen and Shen \cite{CS98} is the identification of optimal time step constraints and  a rigorous stability  analysis of
 the original semi-implicit scheme  without any stabilization, mollification 
 or auxiliary variables. Indeed the very purpose of this work is to settle this important problem in the affirmative. 
We now state the main results. 

Consider the following semi-implicit Fourier-spectral discretization of \eqref{1}
on $\mathbb T^d=[-\frac 12, \frac 12)^d$ ($d\le 3$):
\begin{align} \label{sem1}
\begin{cases}
\displaystyle\frac{u^{n+1}-u^n}{\tau} = - \nu \Delta^2 u^{n+1} 
+\Delta \Pi_N ( f(u^n) ), \quad n\ge 0,
\\
u^0= \Pi_N u_0.
\end{cases}
\end{align}
where $\tau>0$ is the time step.
For each integer $N\ge 2$, define
\begin{align*}
X_N= \operatorname{Span}\Bigl\{ \cos(2\pi k\cdot x),\, \sin(2\pi k\cdot x)
:\quad k=(k_1,
\cdots, k_d)\in \mathbb Z^d, \; |k|_{\infty}=\max\{|k_1|, \cdots, |k_d| \} \le N
\Bigr\}.
\end{align*}
Note that the space $X_N$ includes the constant function. By a minor adjustment of the analysis
one can also consider the following space 
\begin{align*}
\widetilde{X_N}=\operatorname{Span}
\Bigl\{ \cos(2\pi k\cdot x),\, \sin(2\pi k\cdot x): \, k=(k_1,\cdots, k_d) \in \mathbb Z^d,\;
{\textstyle{-\frac N 2}} \le k_j \le \textstyle{\frac N2 -1} \text{ for all $1\le j\le d$} \Bigr\}
\end{align*}
which is more often used in practical computations especially when $N$ is a dyadic number so that
FFT can be implemented.
We define
the $L^2$ projection operator $\Pi_N: \, L^2(\Omega) \to X_N$  by
\begin{align}
(\Pi_N u-u, \phi)=0, \qquad \forall\,\phi \in X_N,
\end{align}
where $(\cdot,\cdot)$ denotes the usual $L^2$ inner product (for real-valued
functions) on $\Omega$. In yet other words, the operator $\Pi_N$ is simply
the truncation of Fourier modes  to  the frequency sector 
$\{|k|_{\infty}\le N\}$. Since  $u^0=\Pi_N u_0 \in X_N$,
by a simple induction one can check that  $u^n \in X_N$
for all $n\ge 0$. It is also possible to reformulate \eqref{sem1} in terms of
 the usual weak formulation, for example:
\begin{align*}
(\frac{u^{n+1}-u^n} {\tau}, v) 
+ ( \nabla (f(u^n)), \nabla v) + \nu (\Delta u^{n+1}, \Delta v)=0,
\quad \forall\, v \in X_N.
\end{align*}
However in our analysis it is slightly more convenient to work with \eqref{sem1}.
Note that  $u^n$ has mean zero  for all $n\ge 0$ since we assume $u_0$ has mean zero.

The following proposition albeit conditional is instrumental to
 understand the relationship between the time step and the $L^{\infty}$-norm of the numerical
 solution. 

\begin{prop}[Conditional stability for semi-implicit discretization, practical version] \label{propN1}
Let $d\le 3$, $\nu>0$, $\tau>0$ and $N\ge 2$. Assume $u_0 \in H^1(\mathbb T^d)$ and has mean zero.  Suppose up to $n=N_1$ the time step $\tau>0$ satisfies
\begin{align*}
\sqrt{\frac {2\nu}{\tau}} \ge 
\frac 32 \max_{0\le n \le N_1} \|u^n\|_{L^{\infty}(\mathbb T^d)}^2 - \frac 12.
\end{align*}
Then the semi-implicit scheme \eqref{sem1} is conditionally energy stable up to $n=N_1$, i.e.
\begin{align*}
\mathcal E(u^{n+1}) \le \mathcal E(u^n), \qquad\forall\,  0\le n \le N_1.
\end{align*}

\end{prop}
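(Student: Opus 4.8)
The plan is to estimate the one-step energy difference $\mathcal E(u^{n+1})-\mathcal E(u^n)$ directly and show it is nonpositive under the stated constraint, by testing the scheme against the natural discrete chemical potential. Write $\delta=u^{n+1}-u^n$, which has mean zero and lies in $X_N$, and split $\mathcal E=\frac{\nu}{2}\|\nabla\cdot\|_2^2+\int_\Omega F$. For the quadratic gradient part I would use the exact identity $\frac{\nu}{2}\|\nabla u^{n+1}\|_2^2-\frac{\nu}{2}\|\nabla u^n\|_2^2 = (-\nu\Delta u^{n+1},\delta)-\frac{\nu}{2}\|\nabla\delta\|_2^2$, which is just completing the square and integrating by parts on the torus; the useful gain here is the implicit term $-\frac{\nu}{2}\|\nabla\delta\|_2^2$. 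For the potential part, since $F$ is a quartic the Taylor expansion about $u^n$ terminates and is exact: $F(u^{n+1})-F(u^n)-f(u^n)\delta=\frac12 f'(u^n)\delta^2+u^n\delta^3+\frac14\delta^4$, which I would regroup into the clean form $\frac14(u^{n+1}+u^n)^2\delta^2+\frac12((u^n)^2-1)\delta^2$ using $u^{n+1}+u^n=2u^n+\delta$.

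Next I would combine the two first-order terms. Because $\delta\in X_N$ and $\Pi_N$ is the $L^2$ projection onto $X_N$, we have $(f(u^n),\delta)=(\Pi_N f(u^n),\delta)$, so $(-\nu\Delta u^{n+1},\delta)+(f(u^n),\delta)=(w,\delta)$ with $w:=-\nu\Delta u^{n+1}+\Pi_N f(u^n)$. The scheme is exactly $\delta/\tau=\Delta w$, whence integration by parts gives $(w,\delta)=\tau(w,\Delta w)=-\tau\|\nabla w\|_2^2\le0$: this is the discrete gradient-flow dissipation. Bounding the quartic remainder pointwise by $\frac14(u^{n+1}+u^n)^2+\frac12((u^n)^2-1)\le \frac32 M^2-\frac12$, where $M:=\max(\|u^n\|_\infty,\|u^{n+1}\|_\infty)$ is controlled by the maximum appearing in the hypothesis, I arrive at $\mathcal E(u^{n+1})-\mathcal E(u^n)\le -\tau\|\nabla w\|_2^2-\frac{\nu}{2}\|\nabla\delta\|_2^2+(\frac32 M^2-\frac12)\|\delta\|_2^2$.

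The heart of the matter is then the interplay between the two distinct dissipation mechanisms — the gradient-flow term $-\tau\|\nabla w\|_2^2$ and the implicit term $-\frac{\nu}{2}\|\nabla\delta\|_2^2$ — against the indefinite nonlinear contribution. I would diagonalize in Fourier. Writing $\lambda_k=4\pi^2|k|^2$ for the eigenvalues of $-\Delta$, the relation $\delta=\tau\Delta w$ reads $\hat\delta_k=-\tau\lambda_k\hat w_k$, so $\tau\|\nabla w\|_2^2=\frac{1}{\tau}\sum_{k\neq0}\lambda_k^{-1}|\hat\delta_k|^2$, and the whole bound becomes $\sum_{k\neq0}\big(-\frac{1}{\tau\lambda_k}-\frac{\nu}{2}\lambda_k+\frac32 M^2-\frac12\big)|\hat\delta_k|^2$. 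It therefore suffices that $\frac{1}{\tau\lambda}+\frac{\nu}{2}\lambda\ge\frac32 M^2-\frac12$ for every admissible frequency, and by AM–GM the left side is minimized at $\lambda_*=\sqrt{2/(\nu\tau)}$ with minimum value $\sqrt{2\nu/\tau}$ — exactly the stated threshold. I expect the genuinely delicate point to be precisely this optimization: one has to recognize that neither dissipation term alone controls the nonlinearity (dropping either is too lossy and destroys the $\tau$-dependence), and that the sharp constraint emerges only from balancing both at the worst ``resonant'' frequency $\lambda\sim\sqrt{2/(\nu\tau)}$; by comparison, the remaining bookkeeping — exactness of the quartic Taylor expansion and the pointwise $L^\infty$ bound — is routine.
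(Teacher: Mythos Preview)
Your proof is correct and coincides with the paper's argument, which is the discrete energy estimate Lemma~\ref{z2}: your term $\tau\|\nabla w\|_2^2$ is exactly the paper's $\tfrac{1}{\tau}\||\nabla|^{-1}(u^{n+1}-u^n)\|_2^2$ (since $\delta=\tau\Delta w$), and your Fourier--side AM--GM on $\tfrac{1}{\tau\lambda}+\tfrac{\nu}{2}\lambda$ is the same step the paper writes as $\tfrac{1}{\tau}\||\nabla|^{-1}\delta\|_2^2+\tfrac{\nu}{2}\|\nabla\delta\|_2^2\ge\sqrt{2\nu/\tau}\,\|\delta\|_2^2$. The only cosmetic differences are that the paper tests the scheme against $(-\Delta)^{-1}\delta$ rather than building the energy difference directly, and uses the integral Taylor remainder rather than your exact quartic expansion and regrouping $\tfrac14(u^{n+1}+u^n)^2\delta^2+\tfrac12((u^n)^2-1)\delta^2$; both lead to the identical bound $\tfrac32 M^2-\tfrac12$.
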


\begin{proof}[Proof of Proposition \ref{propN1}]
This follows directly from the discrete energy estimate Lemma \ref{z2}.
\end{proof}

In practical numerical simulations, it is observed that typical numerical solutions satisfy
$\|u^n\|_{\infty} = O(1)$ even for small $\nu \ll 1$.  As such if we assume the boundedness of the
numerical solution then by Proposition \ref{propN1},  the time step constraint
for $\tau$ is roughly $\tau \lesssim \nu$. In this sense Proposition \ref{propN1} is already useful for
guiding practical numerical simulations. On the other hand, even for the PDE exact solution there is
no $\nu$-independent $L^{\infty}$ bound for the nonlinearity without any mollification. 
Therefore some trade-off must be made in order to obtain a  stability result without assuming
the boundedness of the numerical solution. Our next theorem is a first result in this direction.

\begin{thm}[Conditional stability for semi-implicit discretization] \label{thm1}
Let $d\le 3$, $\nu>0$, $\tau>0$ and $N\ge 2$. Assume $u_0 \in H^1(\mathbb T^d)$ and has mean zero.  Assume $  \|u^0\|_{L^{\infty}(\mathbb T^d)} = L_0<\infty$ (recall $u^0=\Pi_N u_0$
and we may assume $L_0 \ne 0$). 
Then the semi-implicit scheme \eqref{sem1} is conditionally energy stable, i.e.
\begin{align*}
\mathcal E(u^{n+1}) \le \mathcal E(u^n), \qquad\forall\, n\ge 0,
\end{align*}
provided the following time step constraint is satisfied:
\begin{align*}
0<\tau\le \tau_{\operatorname{max}}
= \min\Bigl\{ \frac {8 \nu} {9 L_0^4}, \, 
\tau_{\operatorname{max}}^{(1)} \Bigr\},
\end{align*}
where
\begin{align*}
\tau_{\operatorname{max}}^{(1)}
=
\begin{cases}
C_1  \nu^{\frac 53}, \qquad &d=1;\\
C_2  \nu^3, \qquad &d=2;\\
C_3 \nu^7, \qquad &d=3,
\end{cases}
\end{align*}
where $C_1$, $C_2$, $C_3>0$ are constants depending only on the initial energy 
$E_0=\mathcal E (u^0)$. 
\end{thm}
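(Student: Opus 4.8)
The plan is to run an induction on $n$ whose hypothesis is the uniform energy bound $\mathcal E(u^n)\le E_0$, and whose inductive step is a single application of the discrete energy estimate of Lemma~\ref{z2}, carried out entirely inside the energy identity rather than routing through the $L^\infty$ norm as in Proposition~\ref{propN1} (which is not optimal). From $\mathcal E(u^n)\le E_0$ I get the only two facts about $u^n$ that I intend to use: $\|\nabla u^n\|_2\lesssim\sqrt{E_0/\nu}$, and, from the double-well part of $\mathcal E$, $\|u^n\|_4\lesssim E_0^{1/4}$. Keeping the argument at this level is what will force the emerging constants to depend on $E_0$ alone.

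First I would write the exact one-step balance. Setting $\delta=u^{n+1}-u^n$ and $\mu^{n+1}=-\nu\Delta u^{n+1}+\Pi_N f(u^n)$, the scheme \eqref{sem1} reads $\delta=\tau\Delta\mu^{n+1}$, and expanding $F$ about $u^n$ (completing the square in the Taylor remainder) gives
\begin{align*}
\mathcal E(u^{n+1})-\mathcal E(u^n)= -\tau\|\nabla\mu^{n+1}\|_2^2-\tfrac\nu2\|\nabla\delta\|_2^2+R,\qquad
R=\tfrac14\!\int_\Omega(u^{n+1}+u^n)^2\delta^2+\tfrac12\!\int_\Omega(u^n)^2\delta^2-\tfrac12\|\delta\|_2^2 .
\end{align*}
The two negative terms are the dissipation I will spend to absorb $R$. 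A second ingredient is purely algebraic: since $\delta=\tau\Delta\mu^{n+1}$, Cauchy--Schwarz on the Fourier side yields
\begin{align*}
\|\delta\|_2^2\le\tau\,\|\nabla\mu^{n+1}\|_2\,\|\nabla\delta\|_2 .
\end{align*}

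The crux is the absorption. Treating both quadratic terms in $R$ alike by Hölder and the bound $\|u^{n+1}\|_4,\|u^n\|_4\lesssim E_0^{1/4}$ gives $R\le C E_0^{1/2}\|\delta\|_4^2$ (the $-\tfrac12\|\delta\|_2^2$ being harmless). I then interpolate by Gagliardo--Nirenberg, $\|\delta\|_4^2\lesssim\|\delta\|_2^{2-d/2}\|\nabla\delta\|_2^{d/2}$ (valid for $d\le3$ since $\delta$ has mean zero), convert the powers of $\|\delta\|_2$ into dissipative quantities via the identity above, and apply Young's inequality with the conjugate pair $\big(\tfrac{8}{4-d},\tfrac{8}{4+d}\big)$ to land on
\begin{align*}
R\le \tfrac{4-d}{8}\,\tau\|\nabla\mu^{n+1}\|_2^2+C\,E_0^{\frac{4}{4+d}}\,\tau^{\frac{4-d}{4+d}}\,\|\nabla\delta\|_2^2 .
\end{align*}
The first piece is swallowed by $\tau\|\nabla\mu^{n+1}\|_2^2$, and the second by $\tfrac\nu2\|\nabla\delta\|_2^2$ exactly when $C E_0^{4/(4+d)}\tau^{(4-d)/(4+d)}\le\tfrac\nu2$, i.e. $\tau\lesssim\nu^{(4+d)/(4-d)}E_0^{-4/(4-d)}$. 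Reading off $(4+d)/(4-d)=\tfrac53,3,7$ for $d=1,2,3$ reproduces $\tau_{\max}^{(1)}$ with $C_i=C_i(E_0)$.

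The hard part will be the self-reference already visible in $R$: the factor $\|u^{n+1}+u^n\|_4$ contains $\|u^{n+1}\|_4$, which is precisely the quantity the induction is trying to bound, so the Hölder step above is not yet legitimate. I expect this to be the main obstacle, and I would resolve it by a genuine bootstrap (or continuity-in-$\tau$) argument: provisionally assume, say, $\mathcal E(u^{n+1})\le 2E_0$, run the absorption to conclude $\mathcal E(u^{n+1})\le\mathcal E(u^n)\le E_0$, and thereby improve the provisional bound. The initial datum $\|u^0\|_\infty=L_0$ is needed only to ignite this scheme at $n=0$, before the self-sustaining $E_0$-bounds take over: Proposition~\ref{propN1} with $N_1=0$, after discarding the $-\tfrac12$, guarantees energy decay for the first step as soon as $\tau\le 8\nu/(9L_0^4)$. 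Taking the minimum of this ignition threshold and the interior constraint $\tau_{\max}^{(1)}$ gives $\tau_{\max}$, closes the induction, and delivers $\mathcal E(u^{n+1})\le\mathcal E(u^n)$ for all $n$.
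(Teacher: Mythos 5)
Your proposal is correct in substance but follows a genuinely different route from the paper. The paper's proof (the TEFL scheme, carried out in Theorems \ref{thm2dnu}, \ref{thm1dnu}, \ref{thm3dnu}) keeps the $L^\infty$-based energy estimate of Lemma \ref{z2} and then bounds $\|u^{n+1}\|_\infty$ directly from the resolvent representation $u^{n+1}=(1+\nu\tau\Delta^2)^{-1}u^n+\tau\Delta(1+\nu\tau\Delta^2)^{-1}\Pi_N f(u^n)$ via kernel estimates (Lemmas \ref{leKbeta}, \ref{lem2Dtau1}, \ref{lem700a}, \ref{l800b}); since that representation involves only $u^n$, no self-reference ever arises, and the induction carries both the energy decay and an $L^\infty$ bound on $u^n$. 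You instead stay entirely inside the energy identity: your exact remainder $R=\frac14\int(u^{n+1}+u^n)^2\delta^2+\frac12\int(u^n)^2\delta^2-\frac12\|\delta\|_2^2$ is precisely the identity underlying Lemma \ref{z2} before the $L^\infty$ bound is taken, and your chain (Cauchy--Schwarz $\|\delta\|_2^2\le\tau\|\nabla\mu^{n+1}\|_2\|\nabla\delta\|_2$, Gagliardo--Nirenberg, Young with the conjugate pair $(\frac{8}{4-d},\frac{8}{4+d})$) correctly reproduces the exponents $\nu^{5/3},\nu^{3},\nu^{7}$ with constants depending only on $E_0$. What the paper's route buys is explicit constants, a pointwise bound on the numerical solution, and an induction with no circularity; what your route buys is a more elementary, dimension-uniform argument with no Fourier kernel estimates, which moreover never uses the hypothesis $\|u^0\|_\infty=L_0$, so the constraint $\frac{8\nu}{9L_0^4}$ becomes superfluous and you in fact prove a slightly stronger statement.

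Two caveats. First, the self-reference through $\|u^{n+1}\|_4$ is, as you recognize, the genuine obstacle, and the bare assertion ``provisionally assume $\mathcal E(u^{n+1})\le 2E_0$ and improve it'' is circular on its own; it is the continuity-in-$\tau$ formulation that makes it rigorous, and you should state why it closes: $u^{n+1}(\tau')$ is a rational function of $\tau'$ on the finite-dimensional space $X_N$, $\mathcal E(u^{n+1}(0))=\mathcal E(u^n)\le E_0$, and the set $\{\tau'\in[0,\tau]:\,\mathcal E(u^{n+1}(\tau'))\le 2E_0\}$ is closed and, by your absorption estimate (valid for all $\tau'\le\tau\le\tau^{(1)}_{\max}$, with the constants recomputed for the bound $2E_0$), also relatively open, hence equal to $[0,\tau]$. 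Second, your ``ignition'' step is both unnecessary and slightly flawed: your induction hypothesis $\mathcal E(u^n)\le E_0$ holds trivially at $n=0$, so the first step is handled like every other step; and Proposition \ref{propN1} with $N_1=0$ cannot by itself guarantee $\mathcal E(u^1)\le\mathcal E(u^0)$, because Lemma \ref{z2} requires control of $\max\{\|u^0\|_\infty^2,\|u^1\|_\infty^2\}$, not of $\|u^0\|_\infty^2$ alone (the proposition's literal statement has this off-by-one issue built in, and your use inherits it). Deleting the ignition paragraph makes your proof both cleaner and fully correct.
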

\begin{rem*}
By Proposition \ref{prop_Eu}, we have $E(u^0) \lesssim 1 +E(u_0)$ uniformly in $N$ and $\nu$.
\end{rem*}
\begin{rem*}
Note that our $L^{\infty}$-assumption is made on $u^0$ instead of $u_0$. 
For $d=1$, we have by Sobolev embedding $\|u^0\|_{\infty} \lesssim
\| u_0 \|_{H^1} <\infty$. For dimension $d=2$ and $d=3$, one should note that the
mere assumption $u_0 \in L^{\infty}$ in general does not guarantee $u^0 \in L^{\infty}$
since the spectral projection is a non-smooth cut-off in frequency space.

\end{rem*}

\begin{rem}
The constants $C_j$, $1\le j\le 3$ in Theorem \ref{thm1} can be quantified explicitly. 
See in particular Theorems \ref{thmnu1a}, \ref{thmnu1b},
\ref{thm2dnu}, \ref{thm1dnu} and \ref{thm3dnu} for more precise statements. Heuristically speaking 
in general the threshold 
time step $\tau_{\operatorname{max}}$ can be determined via the relation:
\begin{align*}
   \sqrt{\frac {2\nu} {\tau_{\operatorname{max}} }} = \frac 32 L^2\Rightarrow
   \tau_{\operatorname{max}}= \frac {8} 9 \nu \cdot L^{-4},
\end{align*}
where 
\begin{align*}   
 L=  \max_{n\ge 0} \| u^n \|_{\infty}. 
\end{align*}
 In 2D the $L^{\infty}$-norm of $u^n$ can almost be bounded by the $H^1$ norm
 of $u^n$ which in turn is bounded by $\nu^{-\frac 12} \sqrt{2E(u^n)}$
 multiplied by some logarithm factors depending on $\nu$. However in our analysis we shall
 remove this logarithm and obtain the optimal scaling. 
 For general $d$,  we have the heuristic bound (below we neglect the dependence of the constants
 on energy and focus only on the $\nu$-dependence)
 \begin{align*}
 L \lesssim \| P_{\lesssim (\nu \tau_{\operatorname{max}} )^{-\frac 14} }  \|_{L^4 \to L^{\infty}} 
 \lesssim (\nu \tau_{\operatorname{max}} )^{-\frac{d}{16}},
 \end{align*}
where $P_{\lesssim (\nu \tau_{\operatorname{max}} )^{-\frac 14} } $ is 
a frequency localization operator.  From this one can roughly
determine $\tau_{\operatorname{max}}$ as
\begin{align*}
\tau_{\operatorname{max}} \sim \nu^{\frac {4+d} {4-d}}.
\end{align*}
\end{rem}
Our proof of Theorem \ref{thm1} proceeds by a simple yet powerful 
Trade-Energy-For-$L^{\infty}$ (TEFL) scheme which is a refinement of our earlier work
\cite{LQT17, LQ17, LQ173d}.  In several cases we even manage to calculate explicit constants 
and identified nearly optimal parametric dependences which seem to be the first done
in the literature. These  will be instrumental for future refined analysis on these algorithms.
It is expected that this new streamlined proof can be adapted to higher order cases
and generalized to many other models and settings.

For the first order IMEX scheme \eqref{sem1}, our TEFL recipe consists of three steps.

Step 1. Discrete energy estimate. We show that
\begin{align} \notag 
 &\mathcal E(u^{n+1}) -\mathcal E(u^n) 
 +\left(\frac 12+\sqrt{\frac{2\nu}{\tau}} \right) \|u^{n+1}-u^n\|_2^2 
 \le 
  \|u^{n+1}-u^n \|_2^2 \cdot \frac 32 \operatorname{max}
   \{ \|u^n\|_{\infty}^2, \;
 \| u^{n+1} \|_{\infty}^2 \}.
 \end{align}
Thus to show energy monotonicity it suffices to show
\begin{align*}
\sqrt{\frac{2\nu}{\tau}} \ge \frac 32 \operatorname{max}
   \{ \|u^n\|_{\infty}^2, \;
 \| u^{n+1} \|_{\infty}^2 \}.
\end{align*}

Step 2. Trade energy for $L^{\infty}$. This is the key step. We split $u^{n+1}$ as
\begin{align*}
u^{n+1}= \mathcal L_1 u^n + \mathcal L_2 \Bigl(\mathcal N(u^n) \Bigr),
\end{align*}
where $\mathcal L_1$ and $\mathcal L_2$ are both  linear operators mimicking the resolvent
of elliptic type operators and $\mathcal N(u^n)$
denotes the nonlinear part.  We then prove a direct $L^{\infty}$ estimate using only the energy
conservation and certain smoothing properties of the operators $\mathcal L_1$ and $\mathcal L_2$.
To achieve an ``optimal trade" (i.e. optimal dependence on $\nu$) it is of some importance
to use scaling-critical norms. In the end one obtains
\begin{align*}
 \|u^{n+1}\|_{\infty} \le C_{E(u^n)} \cdot h(\nu,\tau),
 \end{align*}
 where $C_{E(u^n)}>0$ depends only on the energy $E(u^n)$, and $h(\nu, \tau)$
 typically has the form $h(\nu,\tau)=\nu^{-\alpha} \tau^{-\beta}$ for some exponents
 $\alpha>0$, $\beta>0$. 
 
 Step 3. Identification of the optimal time step constraint. Here we work on the inequality
 \begin{align*}
\sqrt{\frac{2\nu}{\tau}} \ge \frac 32 (C_{E(u^n)} \cdot h(\nu,\tau) )^2
\end{align*}
and determine an optimal $\tau_{\operatorname{max}}=\tau_{\operatorname{max}}
(\nu, E)$. A suitable induction procedure then closes the needed estimates and
 yields the result.

The rest of this paper is organized as follows. In Section $2$ we set up the notation and collect
various preliminary materials. The discrete energy inequality is proved in Lemma \eqref{z2}.
Some Sobolev inequalities on $\mathbb T^2$ with explicit constants are also presented here.
Section $3$ and $4$ are devoted to the case $\nu=1$ with slightly different two approaches.
We showcase the proofs with explicit constants.  In Section $5$ we present the our streamlined
TEFL proof for the general case $\nu>0$ in dimension two. In Section $6$ we explain
the modifications needed for dimensions $1$ and $3$ respectively. In the last section
we give concluding remarks.

\section{Notation and preliminaries}
For any two positive quantities $X$ and $Y$, we shall write $X\lesssim Y$ or $Y\gtrsim X$ if
$X \le  CY$ for some  constant $C>0$ whose precise value is unimportant. 
We shall write $X\sim Y$ if both $X\lesssim Y$ and $Y\lesssim X$ hold.
We write $X\lesssim_{\alpha}Y$ if the
constant $C$ depends on some parameter $\alpha$. 
We shall 
write $X=O(Y)$ if $|X| \lesssim Y$ and $X=O_{\alpha}(Y)$ if $|X| \lesssim_{\alpha} Y$. 

We shall denote $X\ll Y$ if
$X \le c Y$ for some sufficiently small constant $c$. The smallness of the constant $c$ is
usually clear from the context. The notation $X\gg Y$ is similarly defined. Note that
our use of $\ll$ and $\gg$ here is \emph{different} from the usual Vinogradov notation
in number theory or asymptotic analysis.

For any $x=(x_1,\cdots, x_d) \in \mathbb R^d$, we denote
\begin{align*}
|x| =|x|_2=\sqrt{x_1^2+\cdots+x_d^2}, \quad
|x|_{\infty} =\max_{1\le j \le d} |x_j|.
\end{align*}
Also occasionally we use the Japanese bracket notation:
\begin{align*}
\langle x \rangle =(1+|x|^2)^{\frac 12}.
\end{align*}

For $1\le p \le \infty$ and any function $f:\, x\in \mathbb T^d \to \mathbb R$, we denote
the Lebesgue $L^p$-norm of $f$ as
\begin{align*}
\|f \|_{L^p_x(\mathbb T^d)} =\|f\|_{L^p(\mathbb R^d)} =\| f \|_p. 
\end{align*}
If $(a_j)_{j \in I}$ is a sequence of complex numbers 
and $I$ is the index set, we denote the discrete $l^p$-norm
as
\begin{align*}
\| (a_j) \|_{l_j^p(j\in I)} = \| (a_j) \|_{l^p(I)} =
\begin{cases}
 (\sum_{j\in I} |a_j|^p)^{\frac 1p}, \quad 0<p<\infty, \\
 \sup_{j\in I} |a_j|, \quad p=\infty.
 \end{cases}
 \end{align*}
 For example
 \begin{align*}
 \| \hat f(k) \|_{l_k^2(\mathbb Z^d)} = (\sum_{k \in \mathbb Z^d} |\hat f(k)|^2)^{\frac 12}.
 \end{align*}

If $f=(f_1,\cdots,f_m)$ is a vector-valued function, we denote
$|f| =\sqrt{\sum_{j=1}^m |f_j|^2}$, and
\begin{align*}
\| f\|_p = \|\, |f |\, \|_p =\| ({\sum_{j=1}^m f_j^2})^{\frac 12} \|_p.
\end{align*}
We use similar convention for the corresponding discrete $l^p$ norms for the vector-valued
case.

We denote
\begin{align*}
\operatorname{sgn}(x) =\begin{cases}
1, \quad x>0;\\
-1, \quad x<0.
\end{cases}
\end{align*}

We use the following convention for the Fourier transform pair:
\begin{align*}
&\hat f(k) = \int_{\mathbb T^d} f(x) e^{-2\pi i k\cdot x} dx, \quad
 f(x) =\sum_{k\in \mathbb Z^d} \hat f(k) e^{2\pi ik \cdot x}.
\end{align*}

We denote for $0\le s \in \mathbb R$, 
\begin{align*}
&\|f \|_{\dot H^s} = \|f \|_{\dot H^s(\mathbb T^d)} = \| |\nabla|^s f \|_{L^2(\mathbb T^d)}
= \|  (2\pi |k|)^s \hat f (k) \|_{l^2_k (\mathbb Z^d)}, \\
& \| f \|_{H^s} = \sqrt{\| f \|_2^2 + \| f\|_{\dot H^s}^2} = \| \langle 
2 \pi |k| \rangle^s \hat f(k) \|_{l^2_k(\mathbb Z^d)}.
\end{align*}

To simplify the notation, in the later part of this paper we shall often denote
\begin{align*}
E_n = \mathcal E (u^n)= \int_{\mathbb T^d}
( \frac 12 \nu |\nabla u^n|^2 + 
\frac 14 ( (u^n)^4 -2 (u^n)^2+1) ) dx,
\end{align*}
where $u^n$ is the discrete numerical solution computed according to the scheme
\eqref{sem1}. Note that $E_0 =\mathcal E (u^0) \ne \mathcal E(u_0)$ in general since
$u^0= \Pi_N u_0$. The next proposition clarifies this point.

\begin{prop}[Relation between $\mathcal E(\Pi_N f)$ and $\mathcal 
E(f)$] \label{prop_Eu}
Let $d\le 3$. 
For any $f \in H^1(\mathbb T^d)$, we have
\begin{align*}
& \operatorname{1)} \; \sup_{N\ge 1} \mathcal E (\Pi_N f) \le
\beta_1  \mathcal E(f)+\beta_2;\\
& \operatorname{2)} \; \lim_{N\to \infty}  \mathcal E(\Pi_N f) = \mathcal E(f).
\end{align*}
Here $\beta_1>0$, $\beta_2>0$ are constants depending only on $d$. 
\end{prop}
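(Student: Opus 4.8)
The plan is to decompose the energy into its homogeneous pieces and treat each under the projection $\Pi_N$ separately. Since $|\mathbb T^d|=1$, one has the identity
\begin{align*}
\mathcal E(u) = \frac \nu 2 \|\nabla u\|_2^2 + \frac 14 \|u\|_4^4 - \frac 12 \|u\|_2^2 + \frac 14,
\end{align*}
so it suffices to control each of $\|\nabla \Pi_N f\|_2$, $\|\Pi_N f\|_4$ and $\|\Pi_N f\|_2$. Two of these are immediate: because $\Pi_N$ is a Fourier cut-off it is an $L^2$-orthogonal projection commuting with $\nabla$, whence $\|\nabla \Pi_N f\|_2 \le \|\nabla f\|_2$ and $\|\Pi_N f\|_2 \le \|f\|_2$. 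The genuinely delicate term is the quartic one, for which I will need a bound of the form $\|\Pi_N f\|_4 \le C_d \|f\|_4$ with $C_d$ independent of both $N$ and $\nu$.

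For part 1) I would first record the coercivity estimate $\|f\|_4^4 \le 8\mathcal E(f)+2$, which follows from the pointwise inequality $2f^2 \le \tfrac 12 f^4 + 2$ after discarding the nonnegative gradient term; together with H\"older and Young on the unit-volume torus this also yields $\|f\|_2^2 \le \tfrac 12 \|f\|_4^4 + \tfrac 12$ and $\tfrac \nu 2 \|\nabla f\|_2^2 \le \mathcal E(f)+\tfrac 12\|f\|_2^2$. Then, using $\|\nabla \Pi_N f\|_2\le\|\nabla f\|_2$, discarding the favorably signed term $-\tfrac 12\|\Pi_N f\|_2^2$, and inserting $\|\Pi_N f\|_4^4 \le C_d^4 \|f\|_4^4$, every quantity on the right is dominated by an affine function of $\mathcal E(f)$; collecting constants gives $\mathcal E(\Pi_N f)\le \beta_1 \mathcal E(f)+\beta_2$ with $\beta_1,\beta_2$ depending only on $d$ through $C_d$. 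I emphasize that one cannot afford to replace the $L^4$-boundedness by the cruder Sobolev embedding $H^1\hookrightarrow L^4$: that route would bound $\|\Pi_N f\|_4^4$ by $\|f\|_{H^1}^4$ and hence introduce a factor $\nu^{-2}$, destroying the required $\nu$-independence of $\beta_1$.

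The $N$- and $\nu$-uniform bound $\|\Pi_N\|_{L^4\to L^4}\le C_d$ is the main obstacle, and I expect to establish it from the product structure of the square cut-off. On $\mathbb T^d$ the truncation to $\{|k|_\infty\le N\}$ factors as the composition $S_N^{(1)}\cdots S_N^{(d)}$ of the one-dimensional Dirichlet partial-sum operators in each coordinate, which commute; each $S_N^{(j)}$ is bounded on $L^p(\mathbb T)$ for $1<p<\infty$ uniformly in $N$ by the classical M.~Riesz theorem (equivalently, by the $L^p$-boundedness of the conjugate function), so $\|\Pi_N\|_{L^4\to L^4}\le A_4^d=:C_d$, independent of $N$ and $\nu$. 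It is essential here that the cut-off be the cube $\{|k|_\infty\le N\}$ rather than a Euclidean ball, for which the associated multiplier fails to be $L^4$-bounded when $d\ge 2$.

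Finally, part 2) is a soft consequence of the convergence of Fourier partial sums. For $f\in H^1(\mathbb T^d)$ one has $\Pi_N f\to f$ in $H^1$, and since $d\le 3$ the Sobolev embedding $H^1(\mathbb T^d)\hookrightarrow L^4(\mathbb T^d)$ gives $\|\Pi_N f-f\|_4 \lesssim \|\Pi_N f - f\|_{H^1}\to 0$. Passing to the limit termwise in the decomposition above, namely $\|\nabla\Pi_N f\|_2\to\|\nabla f\|_2$, $\|\Pi_N f\|_2\to\|f\|_2$ and $\|\Pi_N f\|_4\to\|f\|_4$ (the last by the reverse triangle inequality), then yields $\mathcal E(\Pi_N f)\to\mathcal E(f)$.
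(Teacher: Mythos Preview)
Your proposal is correct and follows essentially the same route as the paper. The key ingredient in both arguments is the uniform bound $\|\Pi_N\|_{L^4\to L^4}\le C_d$, obtained by writing the cube cut-off $\{|k|_\infty\le N\}$ as a product of one-dimensional partial-sum operators and invoking the M.~Riesz/Hilbert-transform $L^p$ theory; the paper phrases this as ``$\Pi_N$ can be expressed as the product of one-dimensional Hilbert-type transforms.'' For part~2) both you and the paper use $\Pi_N f\to f$ in $H^1$ together with the Sobolev embedding $H^1\hookrightarrow L^4$ (valid for $d\le 3$) to pass to the limit in the quartic term. The only cosmetic difference is that the paper bounds the double-well integral $\int((\Pi_N f)^2-1)^2$ directly via $f^4\le 2(f^2-1)^2+2$, whereas you expand $\mathcal E$ into its homogeneous pieces and control $\|f\|_4^4$ by $8\mathcal E(f)+2$; the algebra is equivalent.
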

\begin{rem*}
One may wonder whether it is possible to get rid of $\beta_2$ and prove a perfect
inequality of the form
\begin{align} \label{perfect1a}
\sup_{N\ge 2} \int ( (\Pi_N f)^2-1)^2 dx \lesssim \int (f^2-1)^2 dx.
\end{align}
This is in general not valid. For simplicity consider 1D and $\mathbb T=[-\frac 12, \, \frac 12)$.
Note that for $f=\operatorname{sgn}(x)$, $\Pi_2 f$ or $\Pi_{N_0} f$ for any finite $N_0$
clearly does not vanish.  One can then 
take a suitable mollification $f^{\epsilon}$ of $f =\operatorname{sgn}(x)$ 
to disprove the inequality in this case. By taking $\nu>0$ sufficiently small,
one can then show $\mathcal E (\Pi_2 f^{\epsilon}) \gg \mathcal E( f^{\epsilon})$. 
\end{rem*}

\begin{proof}
We first note that $\Pi_N$ can be expressed as the product of one-dimensional Hilbert-type
 transforms and
 \begin{align*}
 \sup_{N\ge 2} \| \Pi_N f \|_{L^4(\mathbb T^d)} \le\; c_1\|f \|_{L^4(\mathbb T^d)},
 \end{align*}
where $c_1>0$ is some constant depending only on $d$. Clearly then
\begin{align*}
\int_{\mathbb T^d}( (\Pi_N f)^4 - 2 (\Pi_N f)^2+1) dx
& \le \int_{\mathbb T^d} (  c_1^4 |f|^4 +1) dx \notag \\
& \le \int_{\mathbb T^d}  ( 2c_1^4 (f^2-1)^2 +2c_1^4 +1) dx.
\end{align*}
Since $\| \nabla \Pi_N f\|_2 \le \| \nabla f \|_2$, it follows easily that 1) holds.

2) Since $\lim_{N\to \infty} \| \nabla \Pi_N f\|_2 =\| \nabla f\|_2$, we only need to
check the double well energy part. Now denoting
$\Pi_{>N}= \operatorname{Id}- \Pi_N$, we have
\begin{align*}
 & | \int ( (\Pi_N f)^2-1)^2 dx - \int ( f^2-1)^2 dx | \notag \\
 \lesssim& \| \Pi_N f - f\|_4
 \cdot ( \| \Pi_N f\|_4 + \|f\|_4) \cdot ( \| \Pi_N f\|_4^2 + \| f\|_4^2 +1) \notag \\
 \lesssim & \| \Pi_{>N} f \|_{H^1} \cdot ( \|f\|_4+ \| f\|_{4}^3) \to 0,
 \quad \text{as $N\to \infty$}.
 \end{align*}
 Thus 2) holds.

\end{proof}

\begin{lem} \label{m001}
Let $0\le \epsilon<1$. 
For any $\omega \in \mathbb R^2$ with $|\omega|=1$, we have
\begin{align*}
F(\omega)=\int_{ [-\frac 12,\frac 12)^2}
(1+\epsilon^2 |k|^2 +2 \epsilon \omega \cdot k)^{-1} dk \ge 
1+ \frac 16 \epsilon^2 -\frac 7 {30} \epsilon^4.
\end{align*}
In particular if $\epsilon^2 \le \frac 57$, then $F(\omega)\ge 1$.
\end{lem}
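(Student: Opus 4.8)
The plan is to first rewrite the integrand in a transparent form. Since $|\omega|=1$, the denominator equals $|\omega+\epsilon k|^2 = a+b$, where I set $a = 1+\epsilon^2|k|^2$ and $b = 2\epsilon\,\omega\cdot k$; note likewise that $a-b = |\omega-\epsilon k|^2$, so $a^2-b^2 = |\omega+\epsilon k|^2\,|\omega-\epsilon k|^2$. Because $0\le\epsilon<1$ and $|k|\le\tfrac1{\sqrt2}<1$ on the square, neither $\omega+\epsilon k$ nor $\omega-\epsilon k$ can vanish, so $a>|b|\ge0$ throughout and the integrand is a bounded positive function.

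The key device is a symmetrization under $k\mapsto -k$. This map sends $[-\tfrac12,\tfrac12)^2$ onto $(-\tfrac12,\tfrac12]^2$, which differs by a null set, fixes $a$, and flips the sign of $b$. Averaging the two resulting representations of $F(\omega)$ gives
\[
F(\omega) = \int_{[-\frac12,\frac12)^2} \tfrac12\Big(\tfrac{1}{a+b}+\tfrac{1}{a-b}\Big)\,dk = \int_{[-\frac12,\frac12)^2} \frac{a}{a^2-b^2}\,dk.
\]
The virtue is that the term odd in $\epsilon$ is eliminated exactly, not merely shown to integrate to zero. Since $a^2>b^2$, I expand $\frac{a}{a^2-b^2} = \frac1a\cdot\frac{1}{1-b^2/a^2}\ge \frac1a+\frac{b^2}{a^3}$, the inequality holding pointwise because the discarded geometric tail $\frac{(b^2/a^2)^2}{1-b^2/a^2}$ is nonnegative. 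This reduces the claim to $F(\omega)\ge I_1+I_2$ with $I_1=\int a^{-1}\,dk$ and $I_2=\int b^2a^{-3}\,dk$.

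Each piece is then handled by a single tangent-line inequality. For $I_1$ I use $\frac{1}{1+x}\ge 1-x$ (valid for $x>-1$), giving $I_1\ge 1-\epsilon^2\int|k|^2\,dk = 1-\tfrac16\epsilon^2$. For $I_2=4\epsilon^2\int\frac{(\omega\cdot k)^2}{(1+\epsilon^2|k|^2)^3}\,dk$ I use the convexity bound $\frac{1}{(1+x)^3}\ge 1-3x$, yielding $I_2\ge 4\epsilon^2\int(\omega\cdot k)^2\,dk-12\epsilon^4\int(\omega\cdot k)^2|k|^2\,dk$. The moments are elementary: with $|\omega|=1$ and the vanishing of odd moments, and using $\int_{-1/2}^{1/2}k^2\,dk=\tfrac1{12}$, $\int_{-1/2}^{1/2}k^4\,dk=\tfrac1{80}$ and $\tfrac1{12}\cdot\tfrac1{12}=\tfrac1{144}$, one gets $\int(\omega\cdot k)^2\,dk=\tfrac1{12}$ and $\int(\omega\cdot k)^2|k|^2\,dk=\tfrac{7}{360}$, whence $I_2\ge \tfrac13\epsilon^2-\tfrac{7}{30}\epsilon^4$. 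Adding gives exactly $F(\omega)\ge 1+\tfrac16\epsilon^2-\tfrac{7}{30}\epsilon^4$. The ``in particular'' statement then follows by factoring $\tfrac16\epsilon^2-\tfrac7{30}\epsilon^4 = \epsilon^2(\tfrac16-\tfrac7{30}\epsilon^2)$, which is nonnegative precisely when $\epsilon^2\le\tfrac57$.

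I expect the main obstacle to be conceptual rather than computational: one must symmetrize \emph{before} expanding, so that the linear term cancels structurally and the positivity of the quadratic correction $b^2/a^3$ is preserved. A naive Taylor expansion of $(a+b)^{-1}$ produces a first-order term of indefinite sign and loses control of the constants. The pleasant surprise is that once the problem is recast as $I_1+I_2$, the crudest one-term tangent-line bounds already reproduce the stated constants $\tfrac16$ and $\tfrac{7}{30}$ exactly, so no higher-order bookkeeping is needed.
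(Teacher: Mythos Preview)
Your proof is correct and follows essentially the same strategy as the paper: symmetrize under $k\mapsto -k$, then use $(1-X)^{-1}\ge 1+X$ to split into $I_1+I_2$, then linearize each piece. The only cosmetic differences are that the paper bounds $(1+\epsilon^2|k|^2)^{-3}\ge(1-\epsilon^2|k|^2)^3$ and integrates the resulting polynomial in full (obtaining higher-order terms in $\epsilon^2$ which are then discarded), whereas you use the sharper-for-this-purpose tangent-line bound $(1+x)^{-3}\ge 1-3x$ and stop; and the paper invokes the $k_1\leftrightarrow k_2$ symmetry to replace $\int f(|k|^2)(\omega\cdot k)^2\,dk$ by $\tfrac12\int f(|k|^2)|k|^2\,dk$, while you compute the $\omega$-dependent moments directly and observe they collapse to the same constants since $|\omega|=1$.
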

\begin{proof}
Denote the integrand as $g(k)$. By symmetry we have 
\begin{align*}
F(\omega)= \int_{[-\frac 12, \frac 12)^2} \frac 12 (g(k) +g(-k) ) dk.
\end{align*}
Denote $a=\epsilon^2$. We shall slightly abuse the notation and write $k^2=|k|^2$.
Clearly (note that $|k|^2\le \frac 12$)
\begin{align*}
\frac 12 (g(k)+g(-k)) &= (1+ak^2)^{-1} \cdot( 1- \frac {4a (\omega\cdot k)^2}{(1+ak^2)^2} )^{-1}
\notag \\
& \ge (1+a k^2)^{-1}
\cdot ( 1+ \frac {4a (\omega \cdot k)^2} {(1+ak^2)^2})
 \notag \\
& \ge 1-a k^2 + (1-ak^2)^3 \cdot 4a (\omega\cdot k)^2.
\end{align*}
By using symmetry (under the swapping of variables $k_1\leftrightarrow k_2$)
and the fact that $|\omega|=1$, we have for any $f$,
\begin{align*}
\int_{[-\frac 12, \frac 12)^2} f(k^2) (\omega \cdot k)^2 dk_1 dk_2= \frac 12 
\int_{[-\frac 12, \frac 12)^2} f(k^2) k^2 dk_1 dk_2.
\end{align*}
Here we also used the fact 
\begin{align*}
\int_{[-\frac 12, \, \frac 12)^2} f(k^2) k_1 k_2 dk_1 dk_2 =0.
\end{align*}
Thus we only need to work with the integrand
\begin{align*}
1-a k^2 + (1-ak^2)^3 \cdot 2a k^2.
\end{align*}
An explicit calculation then yields the result
\begin{align*}
1+ \frac a 6 -
\frac {7a^2}{30}
+ \frac {9a^3}{140} - 
\frac {83a^4}{12600} \ge 1+ \frac a 6 -
\frac {7a^2}{30}.
\end{align*}

\end{proof}
\begin{rem*}
There is some subtle dependence of the parameters  when we consider
the general inequality $$
\int_{[-\frac 12,\frac 12)^2} (1+\epsilon^2 |k|^2+2 \epsilon
\omega \cdot k)^{-s} dk >1 \quad \text{or $\le 1$} 
$$ 
for $s>0$ and $0<\epsilon \ll 1$. 
Note that for $0<\epsilon\ll 1$, we have (below  $a=\epsilon^2$, $r=a |k|^2=
\epsilon^2 |k|^2$)
\begin{align*}
& X=  {4 a (\omega \cdot k )^2}  (1+r)^{-2} 
= 4 a (\omega \cdot k)^2 (1-2 r +3 r^2) + O(a^4); \\
& \ln(1+r)+\frac 12 \ln (1-X)
= r-\frac 12 r^2 +\frac 12 (-X - \frac 12 X^2) + O(a^3).
\end{align*} 
By an explicit computation, we have
\begin{align*}
&\int_{[-\frac 12, \frac 12)^2}
( r - \frac 12 r^2 - \frac 12 X) dk_1 dk_2= \frac 7 {120} a^2 +O(a^3);\\
&\int_{[-\frac 12, \frac 12)^2}
(-\frac 14 X^2) dk_1 dk_2 =-4 a^2\int_{[-\frac 12, \frac 12)^2}
(\omega\cdot k)^4 dk_1 dk_2 +O(a^3) \notag \\
&\qquad\qquad\qquad\qquad\qquad=
-\frac {a^2}{60} (3 \omega_1^4+ 10 \omega_1^2\omega_2^2+
3 \omega_2^4) + O(a^3) \notag \\
&\qquad\qquad\qquad\qquad\qquad=
-\frac {a^2}{60} (3 +4 \omega_1^2\omega_2^2) + O(a^3),
\end{align*}
where in the last equality we used the fact that $\omega_1^2+\omega_2^2=1$.

Then  clearly
\begin{align*}
H(\epsilon,\omega)= &\int_{[-\frac 12,\frac 12)^2} \ln (1+\epsilon^2 |k|^2+2 \epsilon
\omega \cdot k) dk  \notag \\
=& \int_{[-\frac 12, \frac 12)^2}
( \ln (1+\epsilon^2 |k|^2) + \frac 12 \ln (1-\frac{4\epsilon^2 (\omega\cdot k)^2}
{(1+\epsilon^2 |k|^2)^2}) )dk\notag \\
=& \int_{[-\frac 12, \frac 12)^2} (r-\frac 12 r^2 +\frac 12 (-X - \frac 12 X^2) )
dk  + O(a^3) \notag \\
=& \frac 1 {60} a^2 \Bigl(0.5-4 \omega_1^2\omega_2^2)  \Bigr)+O(a^3) 
\end{align*}
Note that in the above calculation in the main order the integral is dependent on $\omega$. In particular its sign is dependent on the choice of $\omega$. 
If $\omega= \omega^{(1)}= (1,0)$, then apparently
\begin{align*}
H( \epsilon, \omega^{(1)} ) = \frac 1{120} a^2 +O(a^3)
\ge \frac 1 {200} \cdot \epsilon^4>0.
\end{align*}
If $\omega=\omega^{(2)}=(\frac 1{\sqrt 2}, \frac 1{\sqrt 2})$, then
\begin{align*}
H(\epsilon, \omega^{(2)}) = -\frac 1{120} a^2 +O(a^3)
\le - \frac 1 {200} \cdot \epsilon^4<0.
\end{align*}

 Now denote for $s>0$, 
$0<\epsilon \le 1/4$, 
\begin{align*}
I(s,\omega)= &\int_{[-\frac 12,\frac 12)^2} e^{-s \ln (1+\epsilon^2 |k|^2+2 \epsilon
\omega \cdot k)} dk  
= \int_{[-\frac 12,\frac 12)^2} (1+\epsilon^2 |k|^2+2 \epsilon
\omega \cdot k)^{-s} dk.
\end{align*}
Obviously
\begin{align*}
(\partial_s I)(0,\omega)= -H(\epsilon, \omega).
\end{align*}
It is not difficult to check that for some absolute constant $C_1>0$ we have $
\sup_{\omega} |\partial_{ss} I(s, \omega)| \le C_1$
for all $0<s\le 1/4$, $0<\epsilon\le 1/4$. It follows that  for some $s_0(\epsilon)$ depending
on $\epsilon$, we have
\begin{align*}
|\partial_s I(s, \omega^{(j)} )
- \partial_s I(0,\omega^{(j)}) | \le  \frac 1{10} |\partial_s I(0, 
\omega^{(j)} )|=\frac 1{10} |H(\epsilon, \omega^{(j)})|, \quad j=1,2,\; \text{for all $0\le s \le s_0(\epsilon)$}.
\end{align*}
Consequently 
\begin{align*}
&I(s, \omega^{(1)} ) \le 1 - \frac 1 {400} s \epsilon^4;\\
&I(s, \omega^{(2)} ) \ge 1 + \frac 1 {400} s \epsilon^4,
\end{align*}
 for $0<\epsilon \ll 1$ and $
0\le s \le s_0(\epsilon)$.  In other words one should take $s$ moderately large in order to
exceed $1$. Alternatively for small $\epsilon$ one can directly expand the integrand
$(1+\epsilon^2 |k|^2+2 \epsilon
\omega \cdot k)^{-s} $ in binomial series
and obtain
\begin{align*}
I(s, \omega^{(1)} )= 1+\frac 1 {360} \epsilon^2 s (60 s + \epsilon^2 (-3-2s +4s^2 +3s^3) )+ O(\epsilon^6).
\end{align*}
Clearly then $I(s, \omega^{(1)} ) <1$ if $s \ll \epsilon^2$. Similarly
\begin{align*}
I(s, \omega^{(2)})= 1+
\frac 1 {360} \epsilon^2 s 
(60 s+ \epsilon^2 (3+9s+10s^2+4s^3) ) + O(\epsilon^6).
\end{align*}
Clearly $I(s,\omega^{(2)})>1$ for $0<\epsilon \ll 1$.
\end{rem*}
\begin{rem*}
Note that $\ln | \cdot |$ is harmonic  and $|\cdot |^{-s}$ ($s>0$)
is subharmonic on $\mathbb R^2\setminus\{0\}$. The preceding computations show that
a subharmonic approach is not suitable. This is hardly surprising since we are working with a square.
\end{rem*}

\begin{lem} \label{m003}
Let 
\begin{align*}
I(n)= \int_{ [-\frac 12,\frac 12)^2} \frac 1 {|k+n|^2} dk - \frac 1 {|n|^2}.
\end{align*}
By symmetry, we have for any $n=(n_1,n_2)$, $I(n)=I(\pm n_1, \pm n_2)$. 
Furthermore
\begin{align*}
&I(1,0)\ge 0.1731, \quad
I(1,1) \ge 0.0525,\\
&I(2,0) \ge 0.0105,\quad
I(2,1) \ge 0.007, \quad I(2,2)\ge 0.0027\\
&I(3,0) \ge 0.0020,\quad I(3,1)\ge 0.0016,\quad I(3,2)\ge 0.0010,\quad
I(3,3)\ge 0.0005.
\end{align*}

\end{lem}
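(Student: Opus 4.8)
The plan is to reduce the two–dimensional integral defining $I(n)$ to closed form in terms of the inverse–tangent integral $\operatorname{Ti}_2(z)=\int_0^z \frac{\arctan t}{t}\,dt$, and then to evaluate that closed form to rigorous high precision via the alternating series for $\operatorname{Ti}_2$. Using the stated sign symmetries together with the reflection $k_1\leftrightarrow k_2$ (under which the integrand is invariant) I may assume $n_1\ge n_2\ge0$, which already holds for all nine listed vectors. Translating by $n$,
\begin{align*}
\int_{[-\frac12,\frac12)^2}\frac{dk}{|k+n|^2}=\int_{a_1}^{b_1}\!\!\int_{a_2}^{b_2}\frac{dx\,dy}{x^2+y^2},\qquad a_j=n_j-\tfrac12,\quad b_j=n_j+\tfrac12 .
\end{align*}
Since $a_1\ge\tfrac12>0$ the inner $y$–integral is elementary, equal to $\frac1x\bigl[\arctan\frac{b_2}{x}-\arctan\frac{a_2}{x}\bigr]$, and the substitution $t=c/x$ turns $\int_{a_1}^{b_1}\frac{\arctan(c/x)}{x}\,dx$ into $\operatorname{Ti}_2(c/a_1)-\operatorname{Ti}_2(c/b_1)$. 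This yields the exact identity
\begin{align*}
\int_{[-\frac12,\frac12)^2}\frac{dk}{|k+n|^2}=\operatorname{Ti}_2\!\Big(\tfrac{b_2}{a_1}\Big)-\operatorname{Ti}_2\!\Big(\tfrac{b_2}{b_1}\Big)-\operatorname{Ti}_2\!\Big(\tfrac{a_2}{a_1}\Big)+\operatorname{Ti}_2\!\Big(\tfrac{a_2}{b_1}\Big).
\end{align*}

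Next I would evaluate each $\operatorname{Ti}_2$ at its rational argument with rigorous error control. For arguments in $[-1,1]$ I use $\operatorname{Ti}_2(z)=\sum_{m\ge0}\frac{(-1)^m z^{2m+1}}{(2m+1)^2}$, whose terms strictly decrease in absolute value when $|z|\le1$, so consecutive partial sums sandwich the true value and truncation gives two–sided bounds with explicit sign. The only argument that can exceed $1$ is $b_2/a_1=\frac{n_2+1/2}{n_1-1/2}$ (attained when $n_1=n_2$); there I first apply the reflection $\operatorname{Ti}_2(z)=\frac{\pi}{2}\ln z+\operatorname{Ti}_2(1/z)$ for $z>0$, after which the reciprocal argument lies safely below $1$. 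For the cases $n_2=0$ I use that $\operatorname{Ti}_2$ is odd, which collapses the formula to $2\bigl[\operatorname{Ti}_2(\tfrac{1/2}{a_1})-\operatorname{Ti}_2(\tfrac{1/2}{b_1})\bigr]$. Enclosing $\pi$ and the relevant logarithms to enough digits and combining the four terms with their correct signs—lower bounds on the positively signed $\operatorname{Ti}_2$ terms, upper bounds on the negatively signed ones—produces a rigorous lower bound for the integral, and subtracting $|n|^{-2}$ gives the asserted lower bound for $I(n)$ in each case.

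The main obstacle is precision rather than structure: the stated constants are the exact values of $I(n)$ rounded down, so the margins are only of order $10^{-3}$–$10^{-4}$ (for instance $I(1,0)=2\operatorname{Ti}_2(1)-2\operatorname{Ti}_2(1/3)-1\approx0.17318$ against the claimed $0.1731$, and $I(1,1)\approx0.05270$ against $0.0525$). A crude lower Riemann sum or a single–node midpoint estimate is hopelessly lossy here; indeed the pure leading–order midpoint value $\tfrac1{24}\Delta\bigl(|\cdot|^{-2}\bigr)=\tfrac{1}{6|n|^4}$ already undershoots the target, because $|k+n|^{-2}$ is only subharmonic (its Hessian is indefinite) and is sharply peaked toward the edge nearest $-n$. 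This forces the exact $\operatorname{Ti}_2$ reduction, with the series carried out to enough terms to clear each target with room to spare; since the reciprocal arguments are all bounded well below $1$, a handful of terms suffices. The remainder is the routine bookkeeping of the reflection/oddness reductions and the verification of the nine numerical inequalities.
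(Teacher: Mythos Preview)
Your approach is correct and considerably more detailed than the paper's. The paper's entire proof reads ``Direct numerical verification,'' with no indication of how the integrals are computed or how the error is controlled. Your reduction to the inverse--tangent integral $\operatorname{Ti}_2$ via the exact primitive in the inner variable, followed by the reflection formula for arguments larger than $1$ and two--sided truncation of the alternating series $\sum (-1)^m z^{2m+1}/(2m+1)^2$, furnishes a genuine certificate: each $I(n)$ is pinned between explicit rationals (plus a rigorously enclosed $\pi\ln z$ term in the diagonal cases), and the margins you cite ($I(1,0)\approx 0.17318$, $I(1,1)\approx 0.05270$, etc.) are comfortably above the stated thresholds. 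The paper presumably relies on a trusted quadrature routine; your route trades that reliance for elementary hand--checkable bounds, at the cost of the bookkeeping you describe. Either way the lemma is purely computational, and your plan would close it cleanly.
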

\begin{proof}
Direct numerical verification.
\end{proof}

\begin{lem} \label{m005}
For any $\frac 1 {\sqrt 2} <r_1 <r_2$, we have
\begin{align*}
\sum_{\substack{r_1\le |n|\le r_2 \\ n \in \mathbb Z^2}}
\frac 1 {|n|^2}
\le 2\pi  \Bigl(\ln (r_2+ \frac 1 {\sqrt 2}) -  \ln (r_1-\frac 1{\sqrt 2} ) \Bigr).
\end{align*}
Also for $r_2>\frac 1{\sqrt 2}$, 
\begin{align*}
\sum_{\substack{ |n|\ge r_2\\
n \in \mathbb Z^2} }
\frac 1 {|n|^4} \le \pi (r_2- \frac 1 {\sqrt 2} )^{-2},
\end{align*}
and more generally for any $s>2$, 
\begin{align*}
\sum_{\substack{ |n|\ge r_2\\
n \in \mathbb Z^2} }
\frac 1 {|n|^s} \le 2\pi\cdot \frac 1 {s-2} (r_2- \frac 1 {\sqrt 2} )^{-(s-2)}.
\end{align*}
\end{lem}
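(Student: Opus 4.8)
The plan is to derive all three estimates from a single per-square mean-value inequality,
\[
\int_{Q_n}\frac{dx}{|x|^p}\ \ge\ \frac{1}{|n|^p},\qquad Q_n:=n+[-\tfrac12,\tfrac12)^2,\quad n\in\mathbb Z^2\setminus\{0\},
\]
valid for every exponent $p>0$; I would use $p=2$ for the first bound, $p=4$ for the second, and $p=s$ for the third. First I would record the elementary geometry of the tiling $\{Q_n\}_{n\in\mathbb Z^2}$ of $\mathbb R^2$: since $x\in Q_n$ forces $|x-n|\le\frac1{\sqrt2}$, one has $|n|-\frac1{\sqrt2}\le|x|\le|n|+\frac1{\sqrt2}$. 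Consequently the squares with $r_1\le|n|\le r_2$ all lie in the annulus $A_1=\{r_1-\frac1{\sqrt2}\le|x|\le r_2+\frac1{\sqrt2}\}$, while the squares with $|n|\ge r_2$ all lie in the exterior region $A_2=\{|x|\ge r_2-\frac1{\sqrt2}\}$; the hypotheses $r_1>\frac1{\sqrt2}$ and $r_2>\frac1{\sqrt2}$ guarantee that these regions avoid the origin.

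Granting the per-square inequality and using that the $Q_n$ are pairwise disjoint, summation turns each lattice sum into an integral over a subset of the relevant region, for instance $\sum_{r_1\le|n|\le r_2}|n|^{-2}\le\sum\int_{Q_n}|x|^{-2}\,dx\le\int_{A_1}|x|^{-2}\,dx$. A routine polar-coordinate evaluation then reproduces exactly the stated right-hand sides:
\[
\int_{A_1}\frac{dx}{|x|^2}=2\pi\ln\frac{r_2+\frac1{\sqrt2}}{r_1-\frac1{\sqrt2}},\qquad
\int_{A_2}\frac{dx}{|x|^s}=\frac{2\pi}{s-2}\Bigl(r_2-\tfrac1{\sqrt2}\Bigr)^{-(s-2)},
\]
the middle bound being the special case $s=4$, where $\frac{2\pi}{s-2}=\pi$.

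The heart of the matter, and the step I expect to be the main obstacle, is the per-square inequality itself. The natural tool is that $|x|^{-p}$ is subharmonic on $\mathbb R^2\setminus\{0\}$; a direct computation gives $\Delta|x|^{-p}=p^2|x|^{-p-2}>0$, so its average over any circle centered at $n$ of radius $\rho<|n|$ is at least the central value $|n|^{-p}$ (for $p=2$ this average is even explicit, equal to $(|n|^2-\rho^2)^{-1}$, and in general it is nondecreasing in $\rho$). Integrating this circle mean-value inequality in $\rho$ over $[0,\frac12]$ disposes of the inscribed disk $B(n,\frac12)\subseteq Q_n$. The difficulty is that one cannot simply invoke a mean-value property over the square: for a general subharmonic function the inequality $\int_{Q_n}g\ge g(n)$ is false (the harmonic polynomial $\operatorname{Re}\big((z-n)^4\big)$ already violates it), and, worse, every convexity- or Jensen-type estimate applied to the radial profile $t\mapsto t^{-p/2}$ pushes the bound in the wrong direction. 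The gain must therefore come from the angular structure, and the genuine work is to control the four ``corner'' contributions coming from the radii $\frac12<\rho<\frac1{\sqrt2}$, where each circle meets $Q_n$ only in partial arcs, using the specific radial-power form of the integrand.

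Finally I would assemble the pieces: establish the per-square inequality for all the lattice points in play (noting that the relevant $n$ satisfy $|n|\ge1>\frac1{\sqrt2}$, so the squares are bounded away from the origin and the inequality holds with a margin that only improves as $|n|$ grows), sum over the appropriate index set, invoke the containments $\bigcup Q_n\subseteq A_1$ and $\bigcup Q_n\subseteq A_2$ together with the positivity of the integrand, and read off the three estimates from the polar integrals displayed above.
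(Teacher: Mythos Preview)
Your overall architecture---reduce to a per-square inequality $\int_{Q_n}|x|^{-p}\,dx\ge|n|^{-p}$, exploit the disjoint tiling to pass to an integral over the enlarged annulus or exterior, then evaluate in polar coordinates---is exactly the paper's route. The gap is precisely where you yourself flag it: the per-square inequality is never proved, and the tool you propose (subharmonicity of $|x|^{-p}$) will not close it.

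Two points. First, your claim that the per-square inequality holds ``for every exponent $p>0$'' is actually false. In the remark following Lemma~\ref{m001} the paper expands $\int_{[-\frac12,\frac12)^2}(1+\epsilon^2|k|^2+2\epsilon\omega\cdot k)^{-s}\,dk$ and shows that for $\omega=(1,0)$ and $s\ll\epsilon^2$ the integral is strictly \emph{less} than $1$; in other words, for lattice points on a coordinate axis and sufficiently small $p=2s$ one has $\int_{Q_n}|x|^{-p}\,dx<|n|^{-p}$. The paper explicitly concludes that ``a subharmonic approach is not suitable'' for the square. Second, even for the exponents you actually need ($p\ge2$), you leave the corner estimate entirely open, and that is where all the content sits.

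The paper's fix is a two-step device. For $p=2$ it establishes the per-square inequality by hand: Lemma~\ref{m001} symmetrizes under $k\mapsto-k$ and expands to obtain $\int_{[-\frac12,\frac12)^2}|n+k|^{-2}\,dk\ge|n|^{-2}$ whenever $1/|n|^2\le\frac57$, and the remaining lattice points with $|n|=1$ are disposed of by direct numerical evaluation (Lemma~\ref{m003}). For $p=4$ and the general $s>2$ the paper does \emph{not} redo this computation; it bootstraps from $p=2$ via Jensen:
\[
|n|^{-4}=\bigl(|n|^{-2}\bigr)^{2}\le\Bigl(\int_{[-\frac12,\frac12)^2}|n+k|^{-2}\,dk\Bigr)^{2}\le\int_{[-\frac12,\frac12)^2}|n+k|^{-4}\,dk,
\]
and likewise $|n|^{-s}\le\int_{[-\frac12,\frac12)^2}|n+k|^{-s}\,dk$ for any $s\ge2$ by convexity of $t\mapsto t^{s/2}$. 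This explicit-at-$p=2$ then convexity-to-higher-$p$ argument is what replaces your missing corner analysis.
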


\begin{rem*}
Consequently for any $r\ge 2$, we have
\begin{align}
\sum_{\substack{0<|k|\le r\\ k \in \mathbb Z^2}} |k|^{-2}
&= 6+ \sum_{2\le |k|\le r} |k|^{-2} \le 6 + 2\pi
( \ln(r+\frac 1{\sqrt 2}) - \ln (2-\frac 1{\sqrt 2})) \notag \\
& \le 4.4 + 2\pi \ln (r+\frac 1 {\sqrt 2}). \label{eA001}
\end{align}
\end{rem*}

\begin{proof}
By Lemma  \ref{m001} and \ref{m003} we have
\begin{align*}
\sum_{r_1\le |n|\le r_2}
|n|^{-2}
\le \sum_{r_1\le |n|\le r_2} \int_{ [-\frac 12, \frac 12)^2}
|n+k|^{-2} dk.
\end{align*}
Now for each $n \in \mathbb Z^2$, define $I_n=\{z\in \mathbb R^2:\; |z-n|_{\infty} <
\frac 12\}$. Clearly $I_n$ and $I_{n^{\prime}}$ are disjoint if $n\ne n^{\prime}$. 
Obviously
\begin{align*}
\bigcup_{r_1 \le |n|\le r_2} I_n \subset 
\{ z\in \mathbb R^2:\;  r_1-\frac 1{\sqrt 2}
\le |z| \le r_2+\frac 1 {\sqrt 2}  \}.
\end{align*}
Thus
\begin{align*}
\sum_{r_1\le |n|\le r_2}
|n|^{-2}
\le \int_{  r_1-\frac 1{\sqrt 2}
\le |z| \le r_2+\frac 1 {\sqrt 2} } |z|^{-2} dz
=2\pi  \Bigl(\ln (r_2+ \frac 1 {\sqrt 2}) -  \ln (r_1-\frac 1{\sqrt 2} ) \Bigr).
\end{align*}
For the second inequality, we note that 
\begin{align*}
|n|^{-4}= (|n|^{-2})^2 \le 
(\int_{[-\frac 12, \frac 12)^2} |n+k|^{-2} dk )^2 \le 
\int_{[-\frac 12, \frac 12)^2} |n+k|^{-4} dk.
\end{align*}
Thus
\begin{align*}
\sum_{|n|\ge r_2} |n|^{-4} \le \int_{|z|\ge r_2-\frac 1 {\sqrt 2}} |z|^{-4} dz
= \pi (r_2- \frac 1 {\sqrt 2} )^{-2}.
\end{align*}

\end{proof}

\begin{lem} \label{m007}
Let $f \in H^2(\mathbb T^2)$ with mean zero. Then 
for any $r\ge  2$, we have
\begin{align*}
\|f \|_{L^{\infty}(\mathbb T^2)}
\le \| f \|_{\dot H^1} \cdot  \frac 1 {2\pi}
\cdot (4.386 + 2\pi( \ln(r+\frac 1 {\sqrt 2}) ) )^{\frac 12}
 + \|f \|_{\dot H^2} \cdot \frac 1 {4\pi^{\frac 32}} (r-\frac 1 {\sqrt 2})^{-1}.
\end{align*}

\end{lem}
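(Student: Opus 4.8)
The plan is to work entirely on the Fourier side and reduce the $L^\infty$ bound to two weighted $\ell^2$ estimates. Since $f$ has mean zero, $\hat f(0)=0$, so the Fourier inversion formula gives $f(x)=\sum_{k\neq 0}\hat f(k)e^{2\pi i k\cdot x}$ and hence the crude but serviceable pointwise bound $|f(x)|\le \sum_{k\neq 0}|\hat f(k)|$, uniformly in $x$. For $f\in H^2(\mathbb T^2)$ this series converges absolutely (as the Cauchy-Schwarz step below confirms, using $\sum_{k\neq 0}|k|^{-4}<\infty$ in two dimensions), so $f$ is continuous and its supremum is genuinely attained.

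Next I split the frequency sum at the free parameter $r\ge 2$ into a low part $0<|k|\le r$ and a high part $|k|>r$, and apply Cauchy-Schwarz to each with the weight matched to the seminorm I want to produce. For the low part I insert the weight $(2\pi|k|)^{\pm 1}$:
\begin{align*}
\sum_{0<|k|\le r}|\hat f(k)|
\le \Bigl(\sum_{0<|k|\le r}(2\pi|k|)^{-2}\Bigr)^{1/2}
\Bigl(\sum_{0<|k|\le r}(2\pi|k|)^{2}|\hat f(k)|^2\Bigr)^{1/2}
\le \frac{1}{2\pi}\Bigl(\sum_{0<|k|\le r}|k|^{-2}\Bigr)^{1/2}\|f\|_{\dot H^1},
\end{align*}
the last factor being controlled by the full $\dot H^1$ seminorm. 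For the high part I use the weight $(2\pi|k|)^{\pm 2}$:
\begin{align*}
\sum_{|k|>r}|\hat f(k)|
\le \Bigl(\sum_{|k|>r}(2\pi|k|)^{-4}\Bigr)^{1/2}
\Bigl(\sum_{|k|>r}(2\pi|k|)^{4}|\hat f(k)|^2\Bigr)^{1/2}
\le \frac{1}{4\pi^2}\Bigl(\sum_{|k|>r}|k|^{-4}\Bigr)^{1/2}\|f\|_{\dot H^2}.
\end{align*}

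The remaining task is to feed in the two lattice-counting bounds already established. For the low-frequency factor I invoke the refined estimate \eqref{eA001} in the remark after Lemma \ref{m005}, keeping the sharper constant $6-2\pi\ln(2-\tfrac1{\sqrt2})\le 4.386$ rather than the rounded $4.4$; this yields exactly $\tfrac{1}{2\pi}(4.386+2\pi\ln(r+\tfrac1{\sqrt2}))^{1/2}\|f\|_{\dot H^1}$. For the high-frequency factor I apply the second inequality of Lemma \ref{m005} with $r_2=r$, namely $\sum_{|k|\ge r}|k|^{-4}\le \pi(r-\tfrac1{\sqrt2})^{-2}$, together with $\{|k|>r\}\subseteq\{|k|\ge r\}$, so that $\tfrac{1}{4\pi^2}\cdot\sqrt\pi\,(r-\tfrac1{\sqrt2})^{-1}=\tfrac{1}{4\pi^{3/2}}(r-\tfrac1{\sqrt2})^{-1}$ multiplies $\|f\|_{\dot H^2}$. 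Adding the two contributions gives precisely the claimed inequality.

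There is no serious obstacle here; the argument is a textbook frequency splitting. The only points requiring care are bookkeeping ones: matching the $2\pi$ factors from the normalization $\|f\|_{\dot H^s}=\|(2\pi|k|)^s\hat f\|_{\ell^2}$ so that the prefactors emerge as $\tfrac1{2\pi}$ and $\tfrac1{4\pi^{3/2}}$, and propagating the explicit numerical constant $4.386$ from \eqref{eA001} rather than the looser $4.4$. The cutoff $r$ is deliberately left free so that it can later be optimized to balance the logarithmically growing $\dot H^1$ term against the $(r-\tfrac1{\sqrt2})^{-1}$-decaying $\dot H^2$ term.
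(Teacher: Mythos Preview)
Your proof is correct and follows essentially the same approach as the paper: bound $\|f\|_\infty$ by $\sum_{k\neq 0}|\hat f(k)|$, split at $|k|=r$, apply Cauchy--Schwarz with weights $(2\pi|k|)^{\pm 1}$ and $(2\pi|k|)^{\pm 2}$, and plug in the lattice sums from Lemma~\ref{m005}. The only difference is that you spell out the Cauchy--Schwarz step and the absolute convergence explicitly, and you correctly note that one needs the slightly sharper constant $6-2\pi\ln(2-\tfrac{1}{\sqrt 2})\le 4.386$ rather than the rounded $4.4$ recorded in \eqref{eA001}.
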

\begin{proof}
Splitting $f$ into low and high frequencies,  we have
\begin{align*}
\|f \|_{\infty} \le 
\| f \|_{\dot H^1} \cdot ( \sum_{0<|k| \le r}
\frac 1 {(2\pi |k|)^2} )^{\frac 12}
+ \| f\|_{\dot H^2}
\cdot ( \sum_{|k|>r}
\frac 1 {(2\pi |k|)^4} )^{\frac 12}.
\end{align*}
By Lemma \ref{m005}, we have
\begin{align*}
&\sum_{0<|k|\le r} |k|^{-2}= 6 + \sum_{2 \le |k| \le r}
|k|^{-2} \le  6 +2\pi ( \ln (r+\frac 1 {\sqrt{2}}) -\ln (2-\frac 1{\sqrt 2}) )
\le 4.386+ 2\pi \ln (r+ \frac 1 {\sqrt 2} )
, \\
&\qquad \sum_{|k|>r} |k|^{-4} \le \pi  (r-\frac 1 {\sqrt 2})^{-2}.
\end{align*}
\end{proof}

\begin{lem} \label{m009}
Denote $\mathbb T$ the one-periodic torus on $\mathbb R$ which we identify
as $\mathbb T=[-\frac 12, \frac 12)$. 
For any smooth $f:\; \mathbb T \to \mathbb R$, we have
\begin{align*}
\| f -\bar f \|_{L^{\infty}(\mathbb T)} \le \frac 12 \| f^{\prime} \|_{L^1(\mathbb T)},
\end{align*}
where $\bar f$ denotes the average of $f$ on $\mathbb T$. 
On $\mathbb T^2=[-\frac 12, \frac 12)^2$, we have
\begin{align*}
\|f -\bar f\|_{L^2(\mathbb T^2)}
&\le \frac 12 \| (|\partial_1 f | + |\partial_2 f |) \|_{L^1(\mathbb T^2)} \notag \\
&  \le \frac 1 {\sqrt 2}
\| \nabla f \|_{L^1(\mathbb T^2)} = \frac 1{\sqrt 2}\|  \sqrt{ |\partial_1 f|^2 +|\partial_2 f|^2}
\|_{L^1(\mathbb T^2)}.
\end{align*}
Note that we use the convention $|\nabla f | =\sqrt{ |\partial_1 f|^2 +|\partial_2 f|^2}$.
\end{lem}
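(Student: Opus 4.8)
The plan is to establish the one-dimensional bound first by a direct fundamental-theorem-of-calculus argument on the circle, and then to lift it to two dimensions through a slicing argument in which the slice averages are themselves controlled by the one-dimensional bound applied in the transverse direction.

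For the first inequality I would reduce to the mean-zero case by replacing $f$ with $f-\bar f$ (which leaves $f'$ unchanged). Since $f-\bar f$ is continuous with vanishing average on $\mathbb T$, it has a zero at some point $x_0$. For an arbitrary $x$ I write $(f-\bar f)(x)=(f-\bar f)(x)-(f-\bar f)(x_0)$ as the integral of $f'$ along each of the two complementary arcs of $\mathbb T$ joining $x_0$ to $x$; this yields two estimates whose right-hand sides add up to $\|f'\|_{L^1(\mathbb T)}$. Adding them and dividing by two gives $|f(x)-\bar f|\le \tfrac12\|f'\|_{L^1(\mathbb T)}$, and taking the supremum over $x$ finishes the one-dimensional case.

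For the two-dimensional estimate I may assume $\bar f=0$. For fixed $x_2$ set $P(x_2)=\int_{\mathbb T}f(t,x_2)\,dt$ and $A(x_2)=\int_{\mathbb T}|\partial_1 f(t,x_2)|\,dt$; applying the one-dimensional estimate established above to the slice $t\mapsto f(t,x_2)$ produces the pointwise bound $|f(x_1,x_2)|\le |P(x_2)|+\tfrac12 A(x_2)=:\Phi(x_2)$. Symmetrically, with $Q(x_1)=\int_{\mathbb T}f(x_1,s)\,ds$ and $B(x_1)=\int_{\mathbb T}|\partial_2 f(x_1,s)|\,ds$, one gets $|f(x_1,x_2)|\le |Q(x_1)|+\tfrac12 B(x_1)=:\Psi(x_1)$. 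Multiplying the two bounds gives the tensorized estimate $f(x_1,x_2)^2\le \Phi(x_2)\Psi(x_1)$, and integrating over $\mathbb T^2$ by Fubini yields
\begin{align*}
\|f\|_{L^2(\mathbb T^2)}^2 \le \Bigl(\int_{\mathbb T}\Phi\,dx_2\Bigr)\Bigl(\int_{\mathbb T}\Psi\,dx_1\Bigr).
\end{align*}

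The crucial---and only delicate---step is to control the slice averages $\int_{\mathbb T}|P|$ and $\int_{\mathbb T}|Q|$ by the transverse derivative. Here $P$ is itself a mean-zero function of $x_2$ with $P'(x_2)=\int_{\mathbb T}\partial_2 f(t,x_2)\,dt$, so that $\|P'\|_{L^1}\le \|\partial_2 f\|_{L^1(\mathbb T^2)}$; applying the one-dimensional estimate once more, now in the $x_2$ variable, gives $\int_{\mathbb T}|P|\le \|P\|_{L^\infty}\le \tfrac12\|\partial_2 f\|_{L^1(\mathbb T^2)}$ (using $|\mathbb T|=1$), and likewise $\int_{\mathbb T}|Q|\le \tfrac12\|\partial_1 f\|_{L^1(\mathbb T^2)}$. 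Since $\int_{\mathbb T}A=\|\partial_1 f\|_{L^1}$ and $\int_{\mathbb T}B=\|\partial_2 f\|_{L^1}$, both factors $\int\Phi$ and $\int\Psi$ are bounded by $\tfrac12(\|\partial_1 f\|_{L^1}+\|\partial_2 f\|_{L^1})$, and the displayed product bound gives
\begin{align*}
\|f-\bar f\|_{L^2(\mathbb T^2)}^2 \le \tfrac14\bigl(\|\partial_1 f\|_{L^1}+\|\partial_2 f\|_{L^1}\bigr)^2,
\end{align*}
which is the first asserted inequality. The last inequality is just the pointwise Cauchy--Schwarz bound $|\partial_1 f|+|\partial_2 f|\le \sqrt2\,|\nabla f|$ integrated over $\mathbb T^2$. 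The main obstacle is precisely this treatment of the slice means: unlike the Euclidean Gagliardo--Nirenberg argument, where decay at infinity eliminates them outright, on the torus each slice only controls oscillation about a generally nonzero average, and it is the reuse of the one-dimensional lemma in the orthogonal direction that furnishes the matching factors of $\tfrac12$ producing the stated constant.
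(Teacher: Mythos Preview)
Your proof is correct and follows essentially the same strategy as the paper: the one-dimensional inequality via a zero of $f-\bar f$ and the two complementary arcs is exactly the paper's first argument, and in two dimensions both proofs slice in each coordinate, apply the 1D estimate to the slices, and then reuse the 1D estimate on the slice averages $P,Q$ (the paper's $B,A$) to pick up the matching factors of $\tfrac12$.

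The only difference is in how the two slice bounds are combined into a bound on $f^2$. The paper multiplies $(f-A)(f-B)\le C_1C_2$, then uses Young's inequality $(A+B)f\le \tfrac12 f^2+\tfrac12(A+B)^2$ to absorb the cross term, arriving at $\|f\|_2^2\le \int A^2+\int B^2+\tfrac12\|\partial_1 f\|_1\|\partial_2 f\|_1$ before bounding $\int A^2,\int B^2$ by the sup-norms. Your route---triangle inequality first to get the tensorized pointwise bound $f^2\le \Phi(x_2)\Psi(x_1)$, then Fubini---is a cleaner packaging that avoids the Young step entirely and makes the appearance of the constant $\tfrac14$ transparent. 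Both arguments use identical ingredients and yield the identical constant; yours is a mild streamlining of the paper's.
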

\begin{rem*}
The first inequality can be achieved by a smooth approximation of
$f= \frac 12 \operatorname{sgn}(x)$ on $[-\frac 12, \frac 12)$. The constant in the second inequality is not sharp. 
\end{rem*}
\begin{proof}
The first inequality can be proved in two ways. Without loss of generality one may assume
$\bar f=0$. One can then choose some $x_0 \in \mathbb T$ such that $f(x_0)=0$. Writing
$f(x)=\int_{x_0}^x f^{\prime}(s) ds = -\int_x^{x_0+1} f^{\prime}(s) ds$ then yields the
result. Alternatively one can resort to Fourier analysis and show that 
\begin{align*}
K(x)=\mathcal F^{-1} ( \frac 1 {2\pi i k} 1_{k \ne 0}) =\frac 12 \operatorname{sgn}(x)
-x =\begin{cases}
\frac 12 -x, \quad 0<x \le \frac 12;\\
-\frac 12-x, \quad -\frac 12\le x<0.
\end{cases}
\end{align*}
Obviously $\|K\|_{\infty} \le \frac 12$ and the desired inequality follows.

For the second inequality we may also assume $\bar f=0$. Observe
\begin{align*}
&|f(x_1, x_2) - \underbrace{\int f(y_1, x_2) dy_1}_{=:B(x_2)}| \le \frac 12 \int |\partial_1 f(y_1, x_2) | dy_1, \\
&|f(x_1,x_2)- \underbrace{\int f(x_1, y_2) dy_2}_{=:A(x_1)} | \le \frac 12 \int | \partial_2 f(x_1, y_2)| dy_2.
\end{align*}
Note that  since $\bar f =0$ we have $\int A(x_1) dx_1= 0= \int B(x_2) dx_2$. Clearly then
\begin{align*}
& \| A(x_1) \|_{L_{x_1}^{\infty}} \le \frac 12 \int | \partial_1 A| dx_1  \le \frac 12\| \partial_1 f\|_{L^1},\quad
 \| B(x_2) \|_{L_{x_2}^{\infty} } \le \frac 12\| \partial_2 f \|_{L^1}.
\end{align*}
It follows that 
\begin{align*}
f^2 &\le -A(x_1) B(x_2)+ (A(x_1)+B(x_2)) f + \frac 14 \int |\partial_1 f(y_1, x_2) | dy_1 \cdot
\int | \partial_2 f(x_1, y_2)| dy_2 \notag \\
& \le \frac 1 2 f^2 + \frac 12 (A(x_1) +B(x_2))^2 
-A(x_1) B(x_2)+\frac 14 \int |\partial_1 f(y_1, x_2) | dy_1 \cdot
\int | \partial_2 f(x_1, y_2)| dy_2.
\end{align*}
We then obtain
\begin{align*}
\|f \|_{L^2}^2 & \le  \int A(x_1)^2 dx_1 + \int B(x_2)^2 dx_2 +
\frac 12 \| \partial_1 f \|_{L^1} \| \partial_2 f \|_{L^1} \notag \\
& \le \frac 14 ( \| \partial_1 f\|_{L^1} + \| \partial_2 f \|_{L^1} )^2,
\end{align*}
where in the last inequality we used $\| A(x_1)\|_{\infty} \le 
\frac 12 \| \partial_1 f \|_{L^1}$, $\|B(x_2) \|_{\infty} \le 
\frac 12 \| \partial_2 f \|_{L^1}$. 

\end{proof}

\begin{lem} \label{m011}

\begin{align*}
\|f -\bar f\|_{L^{\infty}(\mathbb T^2)}  \le \frac {\sqrt{6.05}}{(2\pi)^2}\|f \|_{\dot H^2(\mathbb T^2)}
\cdot 
\end{align*}
\end{lem}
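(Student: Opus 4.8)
The plan is to prove this $\dot H^2\to L^\infty$ embedding directly on the Fourier side, converting it into a weighted Cauchy--Schwarz inequality whose constant is governed entirely by the lattice sum $\sum_{0\ne k\in\mathbb Z^2}|k|^{-4}$. First I would expand $f-\bar f=\sum_{k\ne 0}\hat f(k)e^{2\pi i k\cdot x}$ (the average $\bar f$ is exactly the $k=0$ Fourier mode, so subtracting it removes precisely that term) and estimate pointwise
\[
|f(x)-\bar f|\le \sum_{k\ne 0}|\hat f(k)|=\sum_{k\ne 0}\frac{1}{(2\pi|k|)^2}\,\bigl((2\pi|k|)^2|\hat f(k)|\bigr).
\]
Applying Cauchy--Schwarz to separate the weight $(2\pi|k|)^{-2}$ from the factor $(2\pi|k|)^2|\hat f(k)|$ then gives
\[
|f(x)-\bar f|\le\Big(\sum_{k\ne 0}(2\pi|k|)^{-4}\Big)^{1/2}\Big(\sum_{k\ne 0}(2\pi|k|)^{4}|\hat f(k)|^2\Big)^{1/2}=\frac{1}{(2\pi)^2}\Big(\sum_{k\ne 0}|k|^{-4}\Big)^{1/2}\|f\|_{\dot H^2},
\]
where the second factor is exactly $\|f\|_{\dot H^2}$ by definition of the homogeneous norm. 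Since the right-hand side is independent of $x$, it also bounds $\|f-\bar f\|_{L^\infty(\mathbb T^2)}$, and the claimed inequality follows once the arithmetic bound $\sum_{0\ne k}|k|^{-4}\le 6.05$ is established.

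It therefore remains to verify the purely arithmetic estimate. I would split the sum into a low-frequency block and a tail: sum $|k|^{-4}$ explicitly over all lattice points with $|k|^2$ below a cutoff $M$, organizing the computation shell by shell over the representable values $|k|^2\in\{1,2,4,5,8,9,10,\dots\}$ (recording the number of lattice points on each shell), and then bound the remaining tail $\sum_{|k|\ge r_2}|k|^{-4}\le \pi\,(r_2-\tfrac{1}{\sqrt 2})^{-2}$ by the integral estimate of Lemma \ref{m005}, with $r_2$ chosen as the smallest admissible norm strictly beyond the cutoff.

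The one delicate point is that the true value of the sum, which equals $4\zeta(2)G\approx 6.027$ with $G$ Catalan's constant, sits uncomfortably close to the target $6.05$, while the integral tail bound of Lemma \ref{m005} is somewhat lossy. Consequently one must push the explicit summation out far enough that the accumulated tail slack stays controlled. A cutoff of $M=36$ (that is, all $|k|\le 6$) suffices: the explicit shell sums total less than $5.94$, and since the next representable norm is $\sqrt{37}$, the tail from $r_2=\sqrt{37}$ contributes at most $\pi(\sqrt{37}-\tfrac{1}{\sqrt 2})^{-2}<0.109$, giving a grand total strictly below $6.05$. This bookkeeping of how many shells to sum before invoking the tail bound is the only step requiring genuine care; the remainder is the standard Sobolev mechanism on the torus.
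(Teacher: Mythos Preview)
Your proof is correct and follows essentially the same route as the paper: reduce to the lattice sum $\sum_{0\ne k}|k|^{-4}$ via Cauchy--Schwarz on the Fourier side, then split into an explicit low-frequency block plus the tail bound $\pi(r_2-\tfrac{1}{\sqrt2})^{-2}$ from Lemma~\ref{m005}. The only difference is the numerical cutoff---the paper takes $r=10$ (summing over the box $|k|_\infty\le 10$ to get $\approx 6.0036$, with tail $\approx 0.037$), whereas you stop at $|k|\le 6$; both choices land comfortably below $6.05$.
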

\begin{proof}
Note that by Lemma \ref{m005}, for any $r>\frac 1 {\sqrt 2}$, we have 
\begin{align*}
\sum_{0\ne k \in \mathbb Z^2}
\frac 1 {(2\pi |k|)^4}\le 
\frac 1 {(2\pi)^4} ( \sum_{0<|k|\le r }\frac 1 {|k|^4}
+\pi (r-\frac 1{\sqrt 2})^{-2}).
\end{align*}
Choosing $r=10$ then yields the result. One should note that
\begin{align*}
&\sum_{0<|k|\le 10} |k|^{-4} \le 
\sum_{0<|k|_{\infty} \le 10} |k|^{-4} \approx 6.00355<6.0036;\\
& \pi (10-\frac 1{\sqrt{2}})^{-2} \approx 0.0363788<0.037.
\end{align*}
\end{proof}

\begin{lem}[Discrete energy estimate] \label{z2}
For any $n\ge 0$,
\begin{align}
 &E_{n+1}-E_n 
 +\left(\frac 12+\sqrt{\frac{2\nu}{\tau}} \right) \|u^{n+1}-u^n\|_2^2 
 \le 
  \|u^{n+1}-u^n \|_2^2 \cdot \frac 32 \operatorname{max}
   \{ \|u^n\|_{\infty}^2, \;
 \| u^{n+1} \|_{\infty}^2 \}.
 \end{align}

\end{lem}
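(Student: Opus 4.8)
The plan is to test the scheme \eqref{sem1} against a carefully chosen multiplier, expand $E_{n+1}-E_n$ by an exact algebraic identity, and then reduce the whole inequality to a single mode-wise elementary estimate together with a pointwise bound on a quartic remainder. Throughout write $w:=u^{n+1}-u^n$, which has mean zero. The first move is to recast \eqref{sem1} in the divergence form $\tau^{-1}w=\Delta\mu$, where $\mu:=-\nu\Delta u^{n+1}+\Pi_N f(u^n)$ plays the role of a discrete chemical potential. This yields $\mu=\tau^{-1}\Delta^{-1}w$ and hence the sign-definite relation
\[
(\mu,w)=\tau^{-1}(\Delta^{-1}w,w)=-\tau^{-1}\||\nabla|^{-1}w\|_2^2\le 0 .
\]

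Next I would compute the energy difference exactly. For the gradient part the polarization identity, followed by integration by parts, gives $\tfrac\nu2(\|\nabla u^{n+1}\|_2^2-\|\nabla u^n\|_2^2)=-\nu(\Delta u^{n+1},w)-\tfrac\nu2\|\nabla w\|_2^2$. For the double-well part I would use the exact quartic Taylor identity
\[
F(u^{n+1})-F(u^n)=f(u^n)\,w+w^2\Bigl(\tfrac14(u^{n+1}+u^n)^2+\tfrac12(u^n)^2-\tfrac12\Bigr),
\]
which one verifies by expanding in powers of $w$. The decisive observation is that the two contributions linear in $w$ combine as $-\nu(\Delta u^{n+1},w)+(f(u^n),w)=(\mu,w)$; here one invokes that $\Pi_N$ is a self-adjoint projection and $w\in X_N$, so that $(\Pi_N f(u^n),w)=(f(u^n),w)$. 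By the first step this equals $-\tau^{-1}\||\nabla|^{-1}w\|_2^2$.

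Collecting the pieces and adding $(\tfrac12+\sqrt{2\nu/\tau})\|w\|_2^2$, the benign $-\tfrac12 w^2$ from the remainder cancels the added $\tfrac12\|w\|_2^2$, and it remains to prove
\[
-\tfrac1\tau\||\nabla|^{-1}w\|_2^2-\tfrac\nu2\|\nabla w\|_2^2+\sqrt{\tfrac{2\nu}\tau}\,\|w\|_2^2+\int w^2\Bigl(\tfrac14(u^{n+1}+u^n)^2+\tfrac12(u^n)^2\Bigr)\le \tfrac32\max\{\|u^n\|_\infty^2,\|u^{n+1}\|_\infty^2\}\,\|w\|_2^2 .
\]
The first three terms are $\le 0$: by Plancherel they equal $-\sum_k\bigl(\tfrac1\tau(2\pi|k|)^{-2}+\tfrac\nu2(2\pi|k|)^2-\sqrt{2\nu/\tau}\bigr)|\hat w(k)|^2$, and each bracket is nonnegative by the AM--GM inequality, the zero mode dropping out since $w$ has mean zero. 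The integral is controlled pointwise: with $M:=\max\{\|u^n\|_\infty,\|u^{n+1}\|_\infty\}$ one has $|u^n|,|u^{n+1}|\le M$, so $\tfrac14(u^{n+1}+u^n)^2+\tfrac12(u^n)^2\le M^2+\tfrac12M^2=\tfrac32M^2$, which is exactly the claimed right-hand side.

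I expect the main difficulty to be bookkeeping rather than conceptual depth: deriving the quartic remainder in precisely the form that isolates the coefficient $\tfrac32$ and the harmless $-\tfrac12 w^2$, and carefully tracking the projection $\Pi_N$ so that the nonlinear pairing collapses to $(f(u^n),w)$. Once the combination $(\mu,w)$ is recognized as sign-definite and the two dissipation terms are traded, via AM--GM, for the single factor $\sqrt{2\nu/\tau}$, the estimate closes with the elementary pointwise bound and no further structure is needed.
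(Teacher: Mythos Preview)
Your proof is correct and follows essentially the same approach as the paper: test the scheme against $(-\Delta)^{-1}w$ (equivalently, pair with $w$ via the chemical potential $\mu$), expand $F(u^{n+1})-F(u^n)$ to isolate the $-\tfrac12 w^2$ and a nonnegative quadratic remainder bounded by $\tfrac32 M^2 w^2$, and then trade the two dissipation terms $\tau^{-1}\||\nabla|^{-1}w\|_2^2+\tfrac\nu2\|\nabla w\|_2^2$ for $\sqrt{2\nu/\tau}\,\|w\|_2^2$ via AM--GM. The only cosmetic differences are that the paper writes the quartic remainder in integral Taylor form $\int_0^1 3(u^n+sw)^2(1-s)\,ds$ rather than your closed form $\tfrac14(u^{n+1}+u^n)^2+\tfrac12(u^n)^2$, and carries out the AM--GM step on the norms rather than mode-wise; also note that $\mu$ need not have mean zero, so strictly $\mu=\tau^{-1}\Delta^{-1}w$ holds only up to a constant, but this is harmless since $w$ has mean zero.
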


\begin{proof}
In this proof we denote by $(\cdot,\cdot)$ the usual $L^2$ inner product.
Recall
\begin{align*}
\frac{u^{n+1}-u^n}{\tau} = -\nu \Delta^2 u^{n+1}+\Delta \Pi_N f(u^n).
\end{align*}
Taking the $L^2$ inner product with $(-\Delta)^{-1}(u^{n+1}-u^n)$ on both sides and 
applying the identity
\begin{align} \notag
b\cdot (b-a) = \frac 12 ( |b|^2-|a|^2+|b-a|^2), \qquad \forall\, a, b\in \mathbb R^d,
\end{align}
we get
\begin{align} \notag 
  & \frac 1 {\tau} \| |\nabla|^{-1} (u^{n+1}-u^n) \|_2^2
  + \frac {\nu}2 ( \| \nabla u^{n+1} \|_2^2 -\| \nabla u^n\|_2^2
+ \| \nabla(u^{n+1}-u^n ) \|_2^2 ) \notag \\
& \qquad = ( \Delta \Pi_N f(u^n), (-\Delta)^{-1} (u^{n+1}-u^n) ).
\end{align}

Since $u^n$ and $u^{n+1}$ have Fourier modes trapped in the sector
$\{k:\, |k|_{\infty}\le N \}$, we have
\begin{align} \notag
 (\Delta \Pi_N f(u^n), (-\Delta)^{-1}(u^{n+1}-u^n) ) = - (f(u^n), u^{n+1}-u^n).
 \end{align}

By using the auxiliary function $g(s)=F(u^n+s(u^{n+1}-u^n))$ (recall
$f=F^{\prime}$) and the Taylor expansion
\begin{align*}
g(1)=g(0)+g^{\prime}(0) + \int_0^1 g^{\prime\prime}(s) (1-s)ds,
\end{align*}
we get
\begin{align*}
F(u^{n+1}) &=F(u^n) +f(u^n) (u^{n+1}-u^n) -\frac 1 2 (u^{n+1}-u^n)^2 \notag \\
& \qquad + (u^{n+1}-u^n)^2 \int_0^1 \tilde f^{\prime}( u^n+s(u^{n+1}-u^n) )  (1-s)ds,
\end{align*}
where $\tilde f(z)=z^3$ and $\tilde f^{\prime}(z)=3z^2$ (for $z\in \mathbb R$). From this it is easy to see
that
\begin{align*}
 - (f(u^n), u^{n+1}-u^n) 
 \le F(u^n) - F(u^{n+1}) -\frac 1 2 \|u^{n+1}-u^n\|_2^2
+\|u^{n+1}-u^n\|_2^2 \cdot \frac 32
\max\{\|u^n\|_{\infty}^2,\|u^{n+1}\|_{\infty}^2\}.
\end{align*}

Thus
\begin{align} \notag 
  & E_{n+1}-E_n+ \frac 1 {\tau} \| |\nabla|^{-1} (u^{n+1}-u^n) \|_2^2
+\frac{\nu}2 \| \nabla(u^{n+1}-u^n ) \|_2^2  + \frac 12 \|u^{n+1}-u^n \|_2^2  \notag \\
&\qquad\qquad\qquad  \le \;\|u^{n+1}-u^n\|_2^2 \cdot \frac 32
\max\{\|u^n\|_{\infty}^2,\|u^{n+1}\|_{\infty}^2\}.
\end{align}

Finally observe
\begin{align*}
&\frac 1 {\tau} \| |\nabla|^{-1} (u^{n+1}-u^n) \|_2^2 + \frac{\nu}2 \| \nabla(u^{n+1}-u^n ) \|_2^2 \notag \\
\ge\; & \sqrt{\frac{2\nu} {\tau} } \| |\nabla|^{-1}(u^{n+1}-u^n) \|_2 \| \nabla (u^{n+1}-u^n) \|_2
\ge \sqrt{\frac{2\nu}{\tau} } \| u^{n+1}-u^n\|_2^2.
\end{align*}

The desired inequality then follows easily.

\end{proof}

\begin{lem} \label{leKbeta}
Let $d\le 3$ and $\beta>0$. Consider on the torus $\mathbb T^d$,
\begin{align*}
K(x) = \mathcal F^{-1} ( (1+\beta (2\pi |k|)^4)^{-1} ) 
=(1+\beta \Delta^2)^{-1} \delta_0,
\end{align*}
 where $\delta_0$ is the periodic Dirac comb. Then for any $1\le p\le \infty$, 
\begin{align*}
\| K \|_{L^p(\mathbb T^d)} \le c_{d,p}\,  (1+\beta^{-d(\frac 14 -\frac 1{4p})}),
\end{align*}
where $c_{d,p}>0$ depends only on $d$ and $p$.  Define
\begin{align*}
\widetilde{K}= \mathcal F^{-1} ( (1+\beta (2\pi |k|)^4)^{-1} 1_{k\ne 0} ).
\end{align*}
Then 
\begin{align*}
\| \widetilde{K} \|_{L^p(\mathbb T^d)} \le \tilde c_{d,p}\,  \beta^{-d(\frac 14 -\frac 1{4p})},
\end{align*}
where $\tilde c_{d,p}>0$ depends only on $d$ and $p$.  

\end{lem}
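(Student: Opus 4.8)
The plan is to reduce the periodic estimates to their whole-space analogues by scaling and Poisson summation, and then to obtain every intermediate $L^p$ bound by interpolating between the two endpoints $p=1$ and $p=\infty$. First I would set up the kernel on all of $\mathbb R^d$: define $G_\beta(x)=\int_{\mathbb R^d}(1+\beta(2\pi|\xi|)^4)^{-1}e^{2\pi i x\cdot\xi}\,d\xi$, and note the exact scaling $G_\beta(x)=\beta^{-d/4}G_1(\beta^{-1/4}x)$ obtained from the substitution $\xi=\beta^{-1/4}\eta$, where $G_1=\mathcal F^{-1}\bigl((1+(2\pi|\cdot|)^4)^{-1}\bigr)$. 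The multiplier $m(\xi)=(1+(2\pi|\xi|)^4)^{-1}$ is smooth, and since $d\le 3<4$ each derivative $\partial^\alpha m$ decays like $|\xi|^{-4-|\alpha|}$ and therefore lies in $L^1(\mathbb R^d)$; consequently $x^\alpha G_1$ is bounded for every $\alpha$, so $G_1$ decays faster than any polynomial and in particular $\|G_1\|_{L^1(\mathbb R^d)}<\infty$ (indeed $G_1\in L^p(\mathbb R^d)$ for all $p$). I expect this integrability of $G_1$, equivalently the rapid decay of the whole-space resolvent kernel, to be the one genuine point of the argument, and it is precisely where the restriction $d\le 3$ enters.

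Next, by Poisson summation, legitimate thanks to the rapid decay of $G_\beta$, the periodic kernel is the periodization $K(x)=\sum_{n\in\mathbb Z^d}G_\beta(x+n)$ on $\mathbb T^d=[-\tfrac12,\tfrac12)^d$. Since the translates of the fundamental cube tile $\mathbb R^d$, Minkowski's inequality yields the scale-invariant endpoint $\|K\|_{L^1(\mathbb T^d)}\le\sum_n\int_{\mathbb T^d}|G_\beta(x+n)|\,dx=\|G_\beta\|_{L^1(\mathbb R^d)}=\|G_1\|_{L^1(\mathbb R^d)}=:A_d$, uniformly in $\beta>0$; this is the sharp $p=1$ bound, the exponent $d(\tfrac14-\tfrac1{4p})$ vanishing at $p=1$. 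For the opposite endpoint I would bound $K$ by its absolutely convergent Fourier series, $\|K\|_{L^\infty(\mathbb T^d)}\le\sum_{k\in\mathbb Z^d}(1+\beta(2\pi|k|)^4)^{-1}$, and split the sum at $|k|\sim\beta^{-1/4}$: the low modes contribute $\lesssim_d 1+\beta^{-d/4}$ (at most $\lesssim 1+\beta^{-d/4}$ terms, each $\le 1$), while the high modes are controlled by $\beta^{-1}\sum_{|k|>\beta^{-1/4}}(2\pi|k|)^{-4}\lesssim_d\beta^{-d/4}$, the tail being finite precisely because $d<4$. Hence $\|K\|_{L^\infty(\mathbb T^d)}\lesssim_d 1+\beta^{-d/4}$ for all $\beta>0$.

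With both endpoints in hand, interpolation by log-convexity of the $L^p$ norms gives $\|K\|_p\le\|K\|_{L^1(\mathbb T^d)}^{1/p}\,\|K\|_{L^\infty(\mathbb T^d)}^{1-1/p}$, and using the subadditivity $(1+t)^\theta\le 1+t^\theta$ valid for $0\le\theta\le 1$ and $t\ge 0$ (here $t=\beta^{-d/4}$, $\theta=1-\tfrac1p$), I conclude $\|K\|_p\le c_{d,p}\bigl(1+\beta^{-d(\frac14-\frac1{4p})}\bigr)$, which is the first claim.

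Finally, for $\widetilde K$ I would exploit that it differs from $K$ only by the constant zero mode, $\widetilde K=K-1$, so $\|\widetilde K\|_{L^1(\mathbb T^d)}\le A_d+1$ remains uniformly bounded. At the other endpoint the zero mode is now absent, so $\|\widetilde K\|_{L^\infty(\mathbb T^d)}\le\sum_{k\ne 0}(1+\beta(2\pi|k|)^4)^{-1}\lesssim_d\beta^{-d/4}$ by the same splitting, with no additive $1$: when $\beta\le 1$ the low-mode count is $\lesssim\beta^{-d/4}$ and the high-mode tail is $\lesssim\beta^{-d/4}$, while when $\beta\ge 1$ one simply bounds $\sum_{k\ne0}(1+\beta(2\pi|k|)^4)^{-1}\le\beta^{-1}(2\pi)^{-4}\sum_{k\ne0}|k|^{-4}\lesssim_d\beta^{-1}\le\beta^{-d/4}$, using $d\le 4$. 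Interpolating these two endpoints as before then gives $\|\widetilde K\|_p\le\tilde c_{d,p}\,\beta^{-d(\frac14-\frac1{4p})}$ with no additive constant, as claimed. The only delicate ingredient throughout is the uniform $L^1$ endpoint, which rests on the integrability of the whole-space kernel $G_1$ established in the first step.
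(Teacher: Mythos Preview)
Your proof is correct and follows essentially the same approach as the paper: introduce the whole-space kernel $G_1$, use its rapid decay (available because $d\le 3$) together with scaling and Poisson summation to control the periodization, and then interpolate. The only organizational difference is that for $K$ the paper estimates $\|K\|_p$ directly for every $p$ by splitting the Poisson sum into near and far lattice translates, whereas you secure only the two endpoints $p=1$ (via tiling) and $p=\infty$ (via the $\ell^1$ bound on Fourier coefficients) and interpolate; both routes are equally valid and yield the same bound.
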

\begin{rem*}
\begin{align*}
\max_{1\le p \le \infty} (c_{d,p}+\tilde c_{d,p}) \le B_d<\infty,
\end{align*}
where $B_d$ depends only on $d$. 
\end{rem*}
\begin{rem*}
By examining the $L^2$ Fourier coefficients of $K$ one can see that the constant $1$
needs to be present in the $L^p$ upper bound. This also follows from the fact
that $\overline K= \int K =1$ and $\|K\|_{L^p(\mathbb T^d)}
\ge \| K\|_{L^1(\mathbb T^d)} \ge \overline K=1$. 
\end{rem*}

\begin{rem*}
For $\beta \ge 1$, one has the stronger bound on $\tilde K$ as
\begin{align*}
\| \tilde K\|_{L_x^1(\mathbb T^d)}  \le \| \tilde K \|_{L_x^{\infty}(\mathbb T^d)}  \lesssim_d \frac 1 {\beta}.
\end{align*}
By using the identity 
\begin{align*}
\frac {\beta (2\pi)^4} {1+ \beta (2\pi |k|)^4} = |k|^{-4} - \frac 1 {(2\pi)^4 \beta}
\cdot \frac 1 {|k|^4 (|k|^4+ \frac 1 {(2\pi)^4\beta})},
\end{align*}
we also have for $\beta\ge 1$, 
\begin{align*}
\| \tilde K \|_{L_x^1(\mathbb T^d)} \gtrsim_d \, \frac 1 {\beta}.
\end{align*}

\end{rem*}

\begin{proof}

Define
\begin{align*}
K_1(x) = \int_{\mathbb R^d} e^{-2\pi i \xi \cdot x} \cdot \frac 1 { 1+ (2\pi |\xi |)^4} d\xi.
\end{align*}
It is easy to check that $|K_1(x)| \lesssim \, \langle x \rangle^{-10}$ and $K_1 \in 
L_x^1(\mathbb R^d)$ for $d\le 3$. 

Now note that for $d\le 3$, if $|x|_{\infty} \le \frac 12$, then $|x| \le \sqrt d \cdot \frac 12
\le \frac {\sqrt 3}2$. Thus if $|l|\ge 4$, then 
\begin{align*}
&\frac 12 |l| \le |x+l| \le 2 |l|, \qquad \forall\,  |x|_{\infty} \le \frac 12.
\end{align*}
It follows that for all $1\le p\le \infty$ and $|l|\ge 4$,
\begin{align*}
\| \langle \beta^{-\frac 14} (x+ l) \rangle^{-10}
\|_{L_x^p(|x|_{\infty}<\frac 12)} \le 
\langle \beta^{-\frac 14} \frac 12 |l| \rangle^{-10}
\; \le \| \langle \beta^{-\frac 14} \frac 14 |x+ l| \rangle^{-10}
\|_{L_x^1(|x|_{\infty}<\frac 12)}.
\end{align*}

Clearly then
\begin{align}
\| K\|_{L^p(\mathbb T^d)} &\le \beta^{-\frac d 4} \sum_{l \in \mathbb Z^d}
\| K_1( \beta^{-\frac 14}( x+l) ) \|_{L_x^{p} (|x|_{\infty} <\frac 12)} \notag \\
& \lesssim \; 
\beta^{-\frac d4} 
\sum_{|l|\le 4} \| K_1 (\beta^{-\frac 14} (x+l)) \|_{p}
+ \beta^{-\frac d4} \sum_{|l|>4} 
\| \langle \beta^{-\frac 14} \frac 14 |x+ l| \rangle^{-10}
\|_{L_x^1(|x|_{\infty}<\frac 12)} \notag \\
& \lesssim \; \beta^{-d(\frac 14 - \frac 1 {4p} )} +1.  \label{tpK_e1}
\end{align}

Now we consider the estimate for $\widetilde{K}(x) = K(x)-1$. 
Obviously by using the previous bound we have $\|\widetilde{K}\|_1 \lesssim \|K\|_1+1
\lesssim 1$. Alternatively
one can compute
\begin{align*}
\| \tilde  K \|_{L_x^1(\mathbb T^d)} \le  1+
\| \sum_{l \in \mathbb Z^d} \beta^{-\frac d 4} |K_1 (\beta^{-\frac 14} (x+l) )|
\|_{L_x^1(\mathbb T^d)} \le  1+
\|  K_1 \|_{L_x^1(\mathbb R^d)} \lesssim 1.
\end{align*}
We then bound the $L^2$ norm as
\begin{align*}
\| \widetilde{K}\|_{L_x^2(\mathbb T^d)} = \| \frac 1 {1+ \beta (2\pi |k|)^4} \|_{l_k^2(
0\ne k \in \mathbb Z^d)}
\lesssim\,
\begin{cases}
\beta^{-1}, \quad \text{if $\beta\ge 1$};\\
\beta^{-\frac d8}, \quad \text{if $0<\beta<1$}.
\end{cases}
\end{align*}
Since $d\le 3$, we have the uniform bound for all $\beta>0$ as
\begin{align*}
\| \widetilde{K}\|_{L_x^2(\mathbb T^d)} \lesssim \; \beta^{-\frac d8}.
\end{align*}
Similarly
\begin{align*}
\| \widetilde{K}\|_{L_x^{\infty} (\mathbb T^d)} &\le  \| \frac 1 {1+ \beta (2\pi |k|)^4} \|_{l_k^1(
0\ne k \in \mathbb Z^d)}
\lesssim
\begin{cases}
\beta^{-1}, \quad \text{if $\beta\ge 1$};\\
\beta^{-\frac d4}, \quad \text{if $0<\beta<1$};
\end{cases} \notag\\
& \lesssim \beta^{-\frac d4}, \qquad\text{for all $\beta>0$ (since $d\le 3\Rightarrow \frac d4<1$)}.
\end{align*}
By using interpolation we then get the $L^p$ estimate. 
\end{proof}
\begin{rem*}
For $2\le p\le \infty$, one can also use
\begin{align*}
\| f \|_{L^p(\mathbb T^d)} \le \| \widehat f \|_{l_k^{\frac p {p-1}} (\mathbb Z^d)}
\end{align*}
which is a variant of Hausdoff-Young. Note the inequality itself also follows from interpolation.
\end{rem*}

\begin{rem*}
Interestingly it is also possible to bound $\| \tilde K\|_p$ directly on the real
side without using interpolation or Fourier transform by modifying
the argument in \eqref{tpK_e1}. Note that we only need to treat the case $\beta \gg 1$.
For simplicity consider 1D and $p=\infty$. Denote $\epsilon = \beta^{-\frac 14}$. It suffices
to check that
\begin{align*}
\| \Bigl(\sum_{n \in \mathbb Z} \epsilon K_1 (\epsilon (x+n ) ) \Bigr)-1\|_{L^{\infty}
(|x|\le \frac 12)} \lesssim\; \epsilon^4.
\end{align*}
To simplify we shall just consider the point $x=0$. The argument is similar and estimates
are uniform for all $|x|<\frac 12$.  Note that $K_1^{(4)} (x) = -K_1(x) +\delta(x)$. 
One can then apply the Euler-MacLaurin formula to the function $g(x)=
\epsilon K_1(\epsilon x)$. The tail term is then given by 
\begin{align*}
-\frac 1 {24} \int_{\mathbb R}   g^{(4)} (x) B_4( x-[x]) dx,
\end{align*}
where $B_4$ is the Bernoulli polynomial:
\begin{align*}
B_4(x) = x^2 (x-1)^2-\frac 1 {30},
\end{align*}
and $[x]$ denotes the smallest integer less than or equal to $x$. It is easy to check
that $B_4(x-[x])$ is continuous at the origin so that it can be paired with the Dirac delta function.
It follows easily that
\begin{align*}
| \int_{\mathbb R}   g^{(4)} (x) B_4( x-[x]) dx| \lesssim\; \epsilon^{4}.
\end{align*}
Note that one can also use truncation and 
mollification to make the whole argument rigorous. We omit
the details.
\end{rem*}

\section{Proof for 2D and  $\nu=1$: first approach}

Consider the semi-implicit scheme:
\begin{align} \label{semi_e1}
\frac{u^{n+1}-u^n}{\tau} = -   \Delta^2 u^{n+1} +\Delta \Pi_N ( f(u^n) ), \quad n\ge 0.
\end{align}
where $\tau>0$ is the time step. Then
\begin{align*}
(1+ \tau \Delta^2) u^{n+1} = u^n + \tau \Delta\Pi_N ( f(u^n)).
\end{align*}

\begin{lem} \label{m100}
Denote $\mathcal E (u^n)=E_n$. Then
\begin{align*}
& \|\Delta \frac {\tau \Delta}{1+\tau \Delta^2} \Pi_N ( (u^n)^3 - 3u^n)\|_{L^2(\mathbb T^2)}
\le  3  E_n\; ;\\
&\| u^{n+1} \|_{\dot H^2(\mathbb T^2)} \le  (2+\frac 1 {\tau} ) \sqrt {E_n}+ 3 E_n\; ;\\
&  \|u^{n+1}\|_{\dot H^1(\mathbb T^2)} \le  2\sqrt 2 \sqrt{E_n} +3E_n.
\end{align*}

\end{lem}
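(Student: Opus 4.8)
The plan is to read $u^{n+1}$ off the resolvent form of \eqref{semi_e1}, namely
\[
u^{n+1}=(1+\tau\Delta^2)^{-1}u^n+\frac{\tau\Delta}{1+\tau\Delta^2}\Pi_N\big(f(u^n)\big),
\]
and to estimate each of the three quantities by tracking Fourier multipliers. The whole lemma hinges on the first inequality; the $\dot H^2$ and $\dot H^1$ bounds then follow by applying $\Delta$ (resp. $|\nabla|$) to this identity and combining the first inequality with crude multiplier estimates.

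For the first bound, observe that the operator $\Delta\frac{\tau\Delta}{1+\tau\Delta^2}$ has Fourier symbol $m(k)=\frac{\tau(2\pi|k|)^4}{1+\tau(2\pi|k|)^4}\in[0,1)$, which vanishes at $k=0$. Hence it annihilates constants and is an $L^2$-contraction, so with $h=(u^n)^3-3u^n$ and $\bar h$ its mean,
\[
\Big\|\Delta\tfrac{\tau\Delta}{1+\tau\Delta^2}\Pi_N h\Big\|_2\le \|h-\bar h\|_2 .
\]
The point of subtracting $3u^n$ (rather than $u^n$) is purely algebraic: $\nabla h=3\big((u^n)^2-1\big)\nabla u^n$, so the gradient of the nonlinearity reproduces exactly the double-well factor $(u^n)^2-1$. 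I would then invoke the two-dimensional $W^{1,1}\to L^2$ inequality of Lemma \ref{m009} together with Cauchy--Schwarz:
\[
\|h-\bar h\|_2\le\tfrac1{\sqrt2}\|\nabla h\|_{L^1}=\tfrac{3}{\sqrt2}\int|(u^n)^2-1|\,|\nabla u^n|\,dx\le \tfrac{3}{\sqrt2}\,\|(u^n)^2-1\|_2\,\|\nabla u^n\|_2 .
\]
Writing $E_n=G+D$ with $G=\tfrac12\|\nabla u^n\|_2^2$ and $D=\tfrac14\|(u^n)^2-1\|_2^2$, the product equals $2\sqrt2\,\sqrt{DG}\le\sqrt2\,(D+G)=\sqrt2\,E_n$, which delivers exactly the constant $3E_n$.

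For the $\dot H^2$ and $\dot H^1$ estimates I would split the nonlinearity as $f(u^n)=\big((u^n)^3-3u^n\big)+2u^n$, so that $u^{n+1}$ becomes the resolvent acting on $u^n$, plus the cubic piece already controlled by the first bound, plus a harmless linear term $2\tau(1+\tau\Delta^2)^{-1}\Delta u^n$. Applying $\Delta$ and estimating multipliers pointwise — using $\frac{(2\pi|k|)^2}{1+\tau(2\pi|k|)^4}\le\frac1\tau$ and $\frac{\tau(2\pi|k|)^4}{1+\tau(2\pi|k|)^4}\le2$ for $k\ne0$, together with the Poincar\'e bound $\|u^n\|_2\le\frac1{2\pi}\|\nabla u^n\|_2\le\frac{1}{2\pi}\sqrt{2E_n}$ and $\|\nabla u^n\|_2\le\sqrt{2E_n}$ — produces three summands $\lesssim\frac1\tau\sqrt{E_n}$, $\,3E_n$, and $\lesssim\sqrt{E_n}$, which are absorbed into $(2+\frac1\tau)\sqrt{E_n}+3E_n$. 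The $\dot H^1$ bound is identical after applying $|\nabla|$ instead: there the resolvent is a contraction on $\|u^n\|_{\dot H^1}\le\sqrt{2E_n}$, and the remaining pieces each carry an extra factor $(2\pi|k|)^{-1}\le\frac1{2\pi}$, making them subordinate.

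The main obstacle is securing the \emph{clean} constant $3$ in the first inequality. This requires recognizing the algebraic identity $\nabla\big(u^3-3u\big)=3(u^2-1)\nabla u$, balancing the double-well and gradient contributions optimally (via $\sqrt{DG}\le(D+G)/2$) so that no constant is wasted, and using the precise form of the $W^{1,1}\to L^2$ Sobolev inequality of Lemma \ref{m009}. Once this is in hand, the $\dot H^1$ and $\dot H^2$ bounds reduce to routine multiplier computations.
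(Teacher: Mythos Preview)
Your proposal is correct and follows essentially the same route as the paper. Both hinge on the splitting $f(u^n)=\big((u^n)^3-3u^n\big)+2u^n$ so that $u^{n+1}=\frac{1+2\tau\Delta}{1+\tau\Delta^2}u^n+\frac{\tau\Delta}{1+\tau\Delta^2}\Pi_N\big((u^n)^3-3u^n\big)$, and both obtain the clean constant $3E_n$ for the nonlinear piece by combining the $W^{1,1}\to L^2$ inequality of Lemma~\ref{m009} with $\nabla(u^3-3u)=3(u^2-1)\nabla u$ and AM--GM. The only cosmetic differences are that the paper keeps $1+2\tau\Delta$ as a single multiplier (bounded by $2+\tfrac1\tau$) whereas you treat the two linear summands separately, and the paper bounds $\|u^n\|_2$ by a second application of Lemma~\ref{m009} rather than by Poincar\'e as you do; both routes land inside the stated constants.
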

\begin{proof}
For simplicity of notation we denote $v=u^{n+1}$ and $u=u^n$. 
We shall prove the first two inequalities at one stroke.
Rewrite
\begin{align*}
v= \frac {1+2\tau \Delta}{1+\tau \Delta^2} u
+ \frac {\tau \Delta}{1+\tau \Delta^2}\Pi_N  ( u^3 - 3u ).
\end{align*}
The special splitting here is to make the bound for nonlinear part easier to express 
in terms of the energy.

Then by Lemma \ref{m009}, we have
(below $\overline{u^3}$ denotes the average of $u^3$ on $\mathbb T^2$)
\begin{align*}
\| v\|_{\dot H^2} & \le (2+\frac 1 {\tau})\|u\|_2 +
\| u^3-\overline{u^3}- 3u \|_2 \notag \\
& \le  (2+\frac 1 {\tau}) \frac 1 {\sqrt{2}} 
\| \nabla u \|_1 + \frac 1 {\sqrt 2} \| 3\nabla u (u^2-1) \|_1  \notag \\
& \le (2+\frac 1 {\tau}) \frac 1 {\sqrt{2}} \| \nabla u\|_2
+ 3 \| 2\cdot \frac 1 {\sqrt 2} \nabla u \cdot \frac 12 (u^2-1) \|_1 \notag \\
& \le (2+\frac 1 {\tau} ) \sqrt {E_n}+ 3 E_n.
\end{align*}

The third inequality is similarly proved. We omit details.
\end{proof}

\begin{thm}[Conditional energy stability for 2D $\nu=1$, first approach] \label{thmnu1a}
Let $d=2$, $\nu=1$ and $N\ge 2$.  Assume $u_0\in H^1(\mathbb T^2)$ and
has zero mean. Recall $u^0=\Pi_N u_0$ and assume $\|u^0\|_{\infty} \le 1.25$.
Take
\begin{align*}
 \tau_{\operatorname{max}} =
\begin{cases}
\frac 12,\quad \text{if $F_0\le 1$};\\
\min\{\frac 12,\, \frac 1 { (F_0 (1+\ln F_0))^2}\}, \quad\text{if $F_0>1$},
\end{cases}
\end{align*}
where $F_0= (2\sqrt{2E_0} +3E_0)^2$.
Then for any $0<\tau\le \tau_{\operatorname{max}}$, the scheme
\eqref{semi_e1} is energy stable, i.e.  
\begin{align*}
E(u^{n+1}) \le E(u^n), \qquad\forall\, n\ge 0.
\end{align*}

\end{thm}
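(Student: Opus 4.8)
The plan is to argue by induction on $n$ that $E_{n+1}\le E_n$, the inductive hypothesis at stage $n$ being that $E_0\ge E_1\ge\cdots\ge E_n$; in particular this furnishes the uniform bound $E_m\le E_0$ for all $0\le m\le n$. The reduction engine is the discrete energy estimate of Lemma \ref{z2}: with $\nu=1$ it gives
\begin{align*}
E_{n+1}-E_n+\Bigl(\tfrac12+\sqrt{\tfrac2\tau}\Bigr)\|u^{n+1}-u^n\|_2^2\le \|u^{n+1}-u^n\|_2^2\cdot\tfrac32\max\{\|u^n\|_\infty^2,\|u^{n+1}\|_\infty^2\},
\end{align*}
so it suffices to verify, at each $n$, the pointwise bound $\tfrac12+\sqrt{2/\tau}\ge\tfrac32\max\{\|u^n\|_\infty^2,\|u^{n+1}\|_\infty^2\}$. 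The whole problem is thereby converted into a uniform $L^\infty$ control of the iterates, which is exactly the Trade-Energy-For-$L^\infty$ step.

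For that $L^\infty$ control I would feed the $\dot H^1$/$\dot H^2$ bounds of Lemma \ref{m100} into the Sobolev-type inequality of Lemma \ref{m007} (each $u^{m}$ is a mean-zero trigonometric polynomial, hence admissible there). Since $t\mapsto 2\sqrt{2t}+3t$ is increasing, the hypothesis $E_n\le E_0$ yields the clean low-frequency bound $\|u^{n+1}\|_{\dot H^1}\le 2\sqrt{2E_n}+3E_n\le 2\sqrt{2E_0}+3E_0=\sqrt{F_0}$, which is precisely the quantity built into the definition of $F_0$. The companion bound $\|u^{n+1}\|_{\dot H^2}\le(2+\tfrac1\tau)\sqrt{E_n}+3E_n$ carries the dangerous factor $\tau^{-1}$; since $\tau\le\tfrac12$ it is at worst $\tfrac2\tau\sqrt{E_0}+3E_0$. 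Applying Lemma \ref{m007} with a free cutoff $r\ge2$ produces
\begin{align*}
\|u^{n+1}\|_\infty\le \sqrt{F_0}\cdot\tfrac1{2\pi}\bigl(4.386+2\pi\ln(r+\tfrac1{\sqrt2})\bigr)^{1/2}+\Bigl(\tfrac2\tau\sqrt{E_0}+3E_0\Bigr)\cdot\tfrac1{4\pi^{3/2}}\bigl(r-\tfrac1{\sqrt2}\bigr)^{-1}.
\end{align*}
The same estimate with the index shifted down controls $\|u^n\|_\infty$ for $n\ge1$, so the only exceptional term in the maximum is $\|u^0\|_\infty$ at the base step.

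The decisive move is to choose the cutoff $r$ of order $\tau^{-1}$, so that the high-frequency weight $(r-\tfrac1{\sqrt2})^{-1}$ exactly cancels the $\tau^{-1}$ in the $\dot H^2$ bound, leaving that contribution at the harmless size $O(\sqrt{E_0}+E_0)$; the price is that the low-frequency term acquires a logarithm $\ln(r+\tfrac1{\sqrt2})\sim\ln(1/\tau)$. This gives a self-consistent bound of the shape $\|u^{n+1}\|_\infty^2\lesssim F_0\bigl(1+\ln(1/\tau)\bigr)$, whereupon the stability requirement $\sqrt{2/\tau}\ge\tfrac32\|u^{n+1}\|_\infty^2$ reads, roughly, $\sqrt{2/\tau}\gtrsim F_0(1+\ln(1/\tau))$. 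Inserting the candidate $\tau_{\operatorname{max}}=\min\{\tfrac12,(F_0(1+\ln F_0))^{-2}\}$ makes $\ln(1/\tau)\sim\ln F_0$ and closes this inequality. The branch $F_0\le1$ is handled directly at $\tau=\tfrac12$ with the minimal cutoff $r=2$, where the explicit constants in Lemma \ref{m007} already place $\|u^{n+1}\|_\infty^2$ comfortably below $\tfrac23(\tfrac12+\sqrt{2/\tau})$.

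Finally, the base case is where the hypothesis $\|u^0\|_\infty\le1.25$ enters: for $n=0$ the term $\tfrac32\|u^0\|_\infty^2\le\tfrac32(1.5625)=2.34375$ must be dominated by $\tfrac12+\sqrt{2/\tau}$, which holds because $\tau\le\tfrac12$ forces $\tfrac12+\sqrt{2/\tau}\ge2.5$; it is \emph{here} that retaining the extra $\tfrac12$ from Lemma \ref{z2} is essential, since $\sqrt{2/\tau}=2$ alone would fail. I expect the main obstacle to be the quantitative bookkeeping of the self-consistent step: one must pin down the implicit constant in the choice $r\sim\tau^{-1}$, carry the constants $4.386$, $\tfrac1{2\pi}$, $\tfrac1{4\pi^{3/2}}$ through the squaring, and confirm that the logarithmic factor $\ln(1/\tau)\sim\ln F_0$ does not overwhelm the gain $\sqrt{2/\tau}\sim F_0(1+\ln F_0)$ — a balance that is genuinely tight in the leading constant and is what forces the particular $(1+\ln F_0)^2$ scaling in $\tau_{\operatorname{max}}$.
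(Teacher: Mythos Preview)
Your approach is essentially the paper's: induction via Lemma \ref{z2}, the $\dot H^1$/$\dot H^2$ bounds of Lemma \ref{m100} fed into Lemma \ref{m007}, and the cutoff $r$ chosen proportional to the $\dot H^2$ bound (hence $\sim\tau^{-1}$) so that the high-frequency contribution collapses to an $O(1)$ constant while the low-frequency term picks up the $\ln(1/\tau)$ factor. The only difference is that the paper first disposes of a preliminary small-data regime (where $(2+\tfrac1\tau)\sqrt{E_0}+3E_0\le\tfrac83\pi^{3/2}$) directly via Lemma \ref{m011} rather than by taking $r=2$, which guarantees the subsequently chosen $r$ satisfies $r\ge2$, and then carries out exactly the explicit constant-chasing you anticipate.
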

\begin{rem*}
We chose the assumption $\|u^0\|_{\infty} \le 1.25$ for simplicity. It can be replaced
by a general upper $L_0$ and $\tau_{\operatorname{max}}$ can be adjusted accordingly with
some simple changes in numerology.
\end{rem*}

\begin{proof}
We use induction. By Lemma \ref{z2}, in order to have $E_{n+1} \le E_n$, the main condition
to verify is the inequality
\begin{align} \label{e500}
\frac 12 + \sqrt {\frac 2 {\tau}} \ge 
\frac 32 \max\{\| u^n \|_{\infty}^2, \,  
\| u^{n+1}\|_{\infty}^2\}.
\end{align}

Step 1. Base step $n=0$. Since by assumption $\|u^0\|_{\infty} \le 1.25$ and 
$0<\tau\le \frac 12$, it is clear that
\begin{align*}
\frac 12 + \sqrt{\frac {2}{\tau}} \ge 2.5 \ge \frac 32 \cdot 1.25^2 \ge 
\frac 32 \| u^0 \|_{\infty}^2.
\end{align*}
Thus to have $E_1\le E_0$ we only need to verify 
\begin{align} \label{e500a}
\frac 12 + \sqrt {\frac 2 {\tau}} \ge 
\frac 32 \| u^1\|_{\infty}^2.
\end{align}

We first note that if 
\begin{align*} 
(2+\frac 1 {\tau}) \sqrt{E_0} + 3E_0 \le \frac 83 \pi^{\frac 32},
\end{align*}
then by Lemma \ref{m011} and \ref{m100}, we have
\begin{align*}
 \|u^{1}\|_{\infty}^2 
 & \le \frac {6.05}{(2\pi)^4} ((2+\frac 1 {\tau}) \sqrt{E_0} + 3E_0 )^2 \notag \\
 & \le \frac {6.05}{(2\pi)^4}
\cdot \frac {64\pi^{3}} {9} = \frac {6.05}9 \cdot \frac 4 {\pi}\approx 0.8559.
\end{align*}
Since we assume $\tau \le \frac 12$, the inequality \eqref{e500a} then clearly holds
in this case. Thus in the following we may assume that we are given the condition
\begin{align} \label{e502}
(2+\frac 1 {\tau}) \sqrt{E_0} + 3E_0 > \frac 83 \pi^{\frac 32}.
\end{align}
We stress that  there is no need to deduce from 
\eqref{e502} any constraint on $\tau$.

By Lemma \ref{m007} and \ref{m100}, we have for any $r\ge  2$, 
\begin{align*}
\|u^{1} \|_{L^{\infty}(\mathbb T^2)}
&\le \| u^{1} \|_{\dot H^1} \cdot  \frac 1 {2\pi}
\cdot (4.386 + 2\pi( \ln(r+\frac 1 {\sqrt 2}) ) )^{\frac 12}
 + \|u^{1} \|_{\dot H^2} \cdot \frac 1 {4\pi^{\frac 32}} (r-\frac 1 {\sqrt 2})^{-1} \notag \\
 &\le (2\sqrt 2 \sqrt{E_0} +3E_0)
 \cdot  \frac 1 {2\pi}
\cdot (4.386 + 2\pi( \ln(r+\frac 1 {\sqrt 2}) ) )^{\frac 12} \notag \\
&\qquad + (  (2+\frac 1 {\tau} ) \sqrt {E_0}+ 3 E_0) \cdot \frac 1 {4\pi^{\frac 32}} (r-\frac 1 {\sqrt 2})^{-1}.
\end{align*}
Now we take
\begin{align*}
r= \frac 3 {4\pi^{\frac 32}}
( (2+\frac 1 {\tau}) \sqrt{E_0} +3 E_0 )+ \frac 1 {\sqrt 2} >2.
\qquad \text{ ( by \eqref{e502} ) }
\end{align*}
Then clearly 
\begin{align*}
(  (2+\frac 1 {\tau} ) \sqrt {E_0}+ 3 E_0) \cdot \frac 1 {4\pi^{\frac 32}} (r-\frac 1 {\sqrt 2})^{-1}
\le \frac 13
\end{align*}
We then only need to check the inequality 
\begin{align*}
\sqrt{ \frac 23 (\frac 12 +\sqrt{\frac 2{\tau}})}
\ge\; \frac 13 +(2\sqrt{2E_0} +3E_0)
 \cdot \frac 1 {2\pi}
\cdot (4.386 + 2\pi( \ln(r+\frac 1 {\sqrt 2}) ) )^{\frac 12}.
\end{align*}
By using the inequality $\sqrt{a+b} \ge \frac {\sqrt a +\sqrt b}{\sqrt 2}$ for any $a\ge 0$, $b\ge 0$,
we have
\begin{align*}
\sqrt{\frac 23 (\frac 12+\sqrt{\frac 2 {\tau}})} \ge \sqrt{\frac 16}
+\sqrt{ \frac 13 \sqrt{\frac 2{\tau}}} 
> \frac 13
+\sqrt{ \frac 13 \sqrt{\frac 2{\tau}}}.
\end{align*}
Thus we need to verify
\begin{align*}
 \frac 13 \sqrt{\frac 2{\tau}} \ge 
 (2 \sqrt{2E_0} +3E_0)^2
 \cdot \frac 1 {4\pi^2}
\cdot (4.386 + 2\pi( \ln(r+\frac 1 {\sqrt 2}) ) ).
 \end{align*}
Now note that $\ln (\frac3 {4\pi^{\frac 32}}) \approx -2.00478<-2$
and $\sqrt{2} \frac {4\pi^{\frac 32}}{3} 
\approx 10.49974<10.5$, and
\begin{align*}
\ln(r+\frac 1{\sqrt 2})
&=\ln( {\sqrt 2} +\frac 3 {4\pi^{\frac 32}}
( (2+\frac 1 {\tau}) \sqrt{E_0} +3 E_0 ) ) \notag \\
&<-2 + \ln (10.5 + 2\sqrt{E_0} +3E_0+ \frac 1 {\tau} \sqrt {E_0} ) 
\notag \\
&\le -2 + \ln \frac{10.5}6 + \ln (2\sqrt{E_0} +3E_0+ \frac 1 {\tau} \sqrt {E_0} ),
\qquad \text{(by \eqref{e502} and \eqref{e3.15exl})} \notag \\
&\le -1.44 + \frac 1 2 \ln E_0 + \ln( 2+\frac 1 {\tau} + 3\sqrt{E_0}) \notag \\
& \le -1.44 + \frac 12 \ln E_0 + \ln(2+3\sqrt{E_0})
+\ln \frac 1 {\tau}. \qquad \text{(since $\frac 1 {\tau}\ge 2$)}
\end{align*}
Here in the second inequality above, we have used the simple inequality
\begin{align} \label{e3.15exl}
10.5+b \le \frac {10.5} 6 b, \qquad \text{if $b >\frac 83 \pi^{\frac 32}$}.
\end{align}

It then suffices for us to prove
\begin{align*}
 \frac 13 \sqrt{\frac 2{\tau}} \ge 
 (2 \sqrt{2E_0} +3E_0)^2
 \cdot \frac 1 {4\pi^2}
\cdot (4.386 + 2\pi\biggl(  -1.44 + \frac 12 \ln E_0 + \ln(2+3\sqrt{E_0})
+\ln \frac 1 {\tau} \biggr) ).
 \end{align*}
Denote $F_0= (2 \sqrt{2E_0} +3E_0)^2$. Note that
\begin{align*}
&\frac {4.386-2\pi\cdot 1.44}{\pi} \approx -1.48389<-1.45, \\
&\frac 12 \ln E_0 + \ln(2+3\sqrt{E_0}) =\ln(2\sqrt {E_0}+3E_0)<\frac 1 2 \ln F_0.
\end{align*}
We then need to show
\begin{align*}
\frac 1 3 \sqrt{\frac 2 {\tau}} \ge F_0 \cdot \frac 1 {4\pi}
(-1.45+\ln F_0 -2 \ln \tau),
\end{align*}
or equivalently
\begin{align*}
\frac{4\pi}3 \sqrt 2
\ge F_0 (-1.45+\ln F_0) \sqrt{\tau}
-2 F_0 \sqrt{\tau} \ln \tau.
\end{align*}

Note that $\frac{4\pi}3 \sqrt 2 \approx 5.92384$. Now we discuss two cases.

Case A: $0<F_0\le 1$.  
 It
is not hard to check that
\begin{align*}
\sup_{0<x\le \frac 12} \sqrt x \ln (\frac 1 x) \le 0.8.
\end{align*}
Clearly then for $0<\tau\le \frac 12$, we have
\begin{align*}
2 \sqrt{\tau} \ln (\frac 1 {\tau}) \le   1.6<5.9
\end{align*}
which is clearly ok for us.

Case B: $F_0>1$. In this case set
\begin{align*}
\sqrt{\tau}= \frac {\delta}{ F_0(1+\ln F_0)}.
\end{align*}
Then 
\begin{align*}
-2 F_0 \sqrt{\tau} \ln \tau= \frac{4\delta}{1+\ln F_0}
(-\ln \delta + \ln F_0 + \ln (1+\ln F_0) ).
\end{align*}
It is not difficult to check that
\begin{align*}
\sup_{x>1}\frac{\ln x + \ln (1+\ln x)} {1+\ln x} \le 1.2.
\end{align*}
Thus for $0<\delta\le 1$,
\begin{align*}
-2 F_0 \sqrt{\tau} \ln \tau \le -4 \delta \ln \delta +4.8 \delta
\end{align*}
Then
\begin{align*}
 F_0 (-1.45+\ln F_0) \sqrt{\tau}
-2 F_0 \sqrt{\tau} \ln \tau \le
-4 \delta \ln \delta +5.8 \delta \le 5.8, \quad \text{for any $0<\delta \le 1$}.
\end{align*}

Concluding from all cases, we obtain that it suffices to take
\begin{align*}
 \tau_{\operatorname{max}} =
\begin{cases}
\frac 12,\quad \text{if $F_0\le 1$};\\
\min\{\frac 12,\, \frac 1 { (F_0 (1+\ln F_0))^2}\}, \quad\text{if $F_0>1$},
\end{cases}
\end{align*}
where $F_0= (2\sqrt{2E_0} +3E_0)^2$. Thus \eqref{e500a} holds and this completes
the base step.

Step 2. Induction step. The main induction hypothesis is that for $n\ge 1$, 
\begin{align*}
& E_{n} \le E_{n-1}, \\
&\frac 3 2\|u^{n}\|_{\infty}^2 \le 
\frac 12 + \sqrt {\frac 2 {\tau}}.
\end{align*}
Clearly by using similar estimates as in Step 1 for $u^1$, one can check that
$u^{n+1}$ satisfies the same inequality and $E_{n+1} \le E_n$. This then completes
the induction step.

\end{proof}
\section{Proof for 2D and $\nu=1$: second approach}
Recall 
\begin{align} \label{semi_e2}
\frac{u^{n+1}-u^n}{\tau} = -   \Delta^2 u^{n+1} +\Delta\Pi_N ( f(u^n) ), \quad n\ge 0.
\end{align}
We rewrite it as
\begin{align*}
u^{n+1}= \frac {1+2\tau \Delta}{1+\tau \Delta^2} u^n
+ \frac {\tau \Delta}{1+\tau \Delta^2} \Pi_N ( (u^n)^3- 3u^n ).
\end{align*}

\begin{lem} \label{m400}

\begin{align*}
&\Bigl(\sum_{0\ne k \in \mathbb Z^2}
(\frac {1} {1+ \tau (2\pi |k|)^4 } )^2 \frac 1 {(2\pi |k|)^2}\Bigr)^{\frac 12}
\le
\begin{cases}
 (- \frac 1 {8\pi} \ln \tau +0.52)^{\frac 12}, \quad \text{if $0<\tau\le (4\pi)^{-4}$};\\
0.8803, \quad \text{if $\tau> (4\pi)^{-4}$}.
\end{cases}
\\
&\Bigl(\sum_{0\ne k \in \mathbb Z^2}
(\frac {1-2\tau (2\pi |k|)^2} {1+ \tau (2\pi |k|)^4 } )^2 \frac 1 {(2\pi |k|)^2}
\Bigr)^{\frac 12}
\le\; h(\tau)=
\begin{cases}
 (- \frac 1 {8\pi} \ln \tau +0.52)^{\frac 12}+0.00872, \quad \text{if $0<\tau\le (4\pi)^{-4}$};\\
0.8988, \quad \text{if $\tau> (4\pi)^{-4}$}.
\end{cases}
\end{align*}
\end{lem}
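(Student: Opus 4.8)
Both quantities are weighted $\ell^2$-norms of radial Fourier multipliers, so the plan is to exploit monotonicity in $\tau$ together with the sum-to-integral comparisons already available in Lemma \ref{m005} and the estimate \eqref{eA001}. Writing $\mu=\tau(2\pi)^4$, the first sum equals $\tfrac1{(2\pi)^2}\sum_{0\ne k}(1+\mu|k|^4)^{-2}|k|^{-2}$, and each summand is decreasing in $\tau$. Hence for the regime $\tau>(4\pi)^{-4}$ the whole sum is bounded by its value at the endpoint $\tau=(4\pi)^{-4}$ (where $\mu=\tfrac1{16}$), and the constant $0.8803$ is a safe upper bound obtained by a direct numerical evaluation there.

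For the small-$\tau$ regime I would perform a radial split at $R=(2\pi)^{-1}\tau^{-1/4}=\mu^{-1/4}$; the hypothesis $\tau\le(4\pi)^{-4}$ forces $\mu\le\tfrac1{16}$, i.e. $R\ge 2$, which is exactly the threshold needed to apply \eqref{eA001} and Lemma \ref{m005}. On the low band $0<|k|\le R$ I drop the multiplier, $(1+\mu|k|^4)^{-2}\le 1$, and invoke \eqref{eA001} to get $\sum_{0<|k|\le R}|k|^{-2}\le 4.4+2\pi\ln(R+\tfrac1{\sqrt2})$; since $\ln R=-\tfrac14\ln\mu=-\tfrac14\ln\tau-\ln(2\pi)$, dividing by $(2\pi)^2$ produces precisely the leading term $-\tfrac1{8\pi}\ln\tau$. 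On the high band $|k|>R$ I use the elementary bound $(1+\mu|k|^4)^2\ge 4\mu|k|^4$, so the tail is controlled by $\tfrac1{4\mu}\sum_{|k|>R}|k|^{-6}\le\tfrac{\pi}{8\mu}(R-\tfrac1{\sqrt2})^{-4}$ via Lemma \ref{m005}; with $R=\mu^{-1/4}$ this stays a bounded constant uniformly as $\tau\to0$. Collecting the two bands and the $\mu$-independent leftovers yields the claimed $(-\tfrac1{8\pi}\ln\tau+0.52)^{1/2}$, the value $0.52$ being a generous overestimate for the residual constants.

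For the second sum the key observation is that its numerator differs from the first only through the factor $1-2\tau(2\pi|k|)^2$, and $|1-2\tau(2\pi|k|)^2|\le 1+2\tau(2\pi|k|)^2$. The triangle inequality in $\ell^2$ (Minkowski) then gives
\begin{align*}
\Bigl\|\frac{1-2\tau(2\pi|k|)^2}{1+\tau(2\pi|k|)^4}\cdot\frac{1}{2\pi|k|}\Bigr\|_{\ell^2}
\le \Bigl\|\frac{1}{1+\tau(2\pi|k|)^4}\cdot\frac{1}{2\pi|k|}\Bigr\|_{\ell^2}
+2\tau\Bigl\|\frac{2\pi|k|}{1+\tau(2\pi|k|)^4}\Bigr\|_{\ell^2},
\end{align*}
so $S_2$ is bounded by the first-sum bound plus a correction $C(\tau)=2\tau\|(2\pi|k|)(1+\tau(2\pi|k|)^4)^{-1}\|_{\ell^2}$. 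A termwise computation gives $\frac{d}{d\tau}\frac{2\tau(2\pi|k|)}{1+\tau(2\pi|k|)^4}=\frac{2(2\pi|k|)}{(1+\tau(2\pi|k|)^4)^2}>0$, so $C(\tau)$ is \emph{increasing} in $\tau$. Hence in the small-$\tau$ regime $C(\tau)\le C((4\pi)^{-4})$, numerically $\le 0.00872$ (a split at $|k|\sim\tau^{-1/2}$ shows in fact $C(\tau)=O(\sqrt\tau)$ there), while for $\tau>(4\pi)^{-4}$ monotonicity bounds $C(\tau)$ by its limit $\lim_{\tau\to\infty}C(\tau)=\frac{2}{(2\pi)^3}\bigl(\sum_{0\ne k}|k|^{-6}\bigr)^{1/2}\le 0.0185$, which combined with the constant $0.8803$ for the first sum produces $0.8988$.

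I expect the main obstacle to be the explicit-constant bookkeeping rather than any conceptual difficulty: one must track the $\tfrac1{\sqrt2}$ shifts coming from Lemma \ref{m005}, confirm the two endpoint numerics at $\tau=(4\pi)^{-4}$ and the limiting Epstein-type sum $\sum_{0\ne k}|k|^{-6}$, and verify that all the $\mu$-independent remainders in the small-$\tau$ split indeed fit under $0.52$ and $0.00872$. The conceptually important point — that the leading growth is exactly $(-\tfrac1{8\pi}\ln\tau)^{1/2}$ — is pinned down by the radial split radius $R\sim\tau^{-1/4}$ and the logarithmic divergence in \eqref{eA001}, so the delicate part is purely the verification of the stated numerical constants.
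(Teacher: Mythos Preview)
Your approach is essentially the paper's: the radial split at $R=(2\pi)^{-1}\tau^{-1/4}$, the use of \eqref{eA001} on the low band and Lemma~\ref{m005} on the tail, and the $\ell^2$ triangle inequality to pass from the first sum to the second are exactly what the paper does. The only differences are cosmetic: you invoke monotonicity in $\tau$ to reduce the large-$\tau$ case and the correction $C(\tau)$ to endpoint or limiting values, whereas the paper re-does an explicit band split in each regime (and on the high band uses $(1+x)^{-2}\le x^{-2}$ with a $|k|^{-10}$ tail rather than your $(1+x)^2\ge 4x$ with a $|k|^{-6}$ tail), but the structure and the constant bookkeeping come out the same.
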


\begin{proof}
The natural cut-off is $k_0=k_0(\tau)= \frac 1 {2\pi} \tau^{-\frac 14}$.

Case 1: $k_0\ge 2$. This is equivalent to $0<\tau \le (4\pi)^{-4}<
4.02 \times 10^{-5}$. 
Then by \eqref{eA001}, we have
\begin{align*}
\sum_{0<|k|\le k_0}  &(1+\tau (2\pi |k|)^4)^{-2} |k|^{-2}
 \le \sum_{0<|k|\le k_0} |k|^{-2} 
\le 4.4 +2\pi \ln (k_0 +\frac 1 {\sqrt 2}) \notag \\
&=4.4 + 2\pi \ln k_0 + 2\pi \ln (1+\frac 1 {k_0\sqrt 2}) \le 
-\frac {\pi} 2\ln \tau -2\pi \ln (2\pi)+4.4 +2\pi\ln (1+\frac 1{2 \sqrt 2}) \notag \\
& \le -\frac {\pi} 2\ln \tau-5.24.
\end{align*}
Also by Lemma \ref{m005}, we have
\begin{align*}
\sum_{|k|> k_0} (1+\tau (2\pi |k|)^4)^{-2} |k|^{-2}
& \le \tau^{-2} (2\pi)^{-8} \sum_{|k|>k_0} |k|^{-10}
\le \tau^{-2} (2\pi)^{-7} \cdot \frac 1 8 (k_0-\frac 1 {\sqrt 2})^{-8} 
 \notag \\
 & = 2\pi \cdot \frac 18\cdot  (1- \frac 1 {k_0 \sqrt{2} } )^{-8}
 \le 25.76.
\end{align*}
Thus
\begin{align*}
(\sum_{0\ne k \in \mathbb Z^2} (1+\tau (2\pi |k|)^4)^{-2} (2\pi |k|)^{-2})^{\frac 12}
&\le \; \frac 1 {2\pi} \cdot ( -\frac {\pi} 2\ln \tau +20.52 )^{\frac 12}
\le (- \frac 1 {8\pi} \ln \tau +0.52)^{\frac 12}.
\end{align*}

Now observe (for the first inequality we use $1+\tau(2\pi |k|)^4\ge 2 \sqrt{\tau} (2\pi |k|)^2$)
\begin{align*}
&\sum_{0<|k|\le k_0}\Bigl(
\frac {2\tau (2\pi |k|)^2} {1+ \tau (2\pi |k|)^4 } \Bigr)^2  |k|^{-2}
 \le \tau (-\frac {\pi} 2\ln \tau-5.24)<0.0015, \quad\text{for any $0<\tau<4.02\times 10^{-5}$};\\
&\sum_{|k|> k_0}\Bigl(
\frac {2\tau (2\pi |k|)^2} {1+ \tau (2\pi |k|)^4 } \Bigr)^2  |k|^{-2}
 \le 4 \cdot (2\pi)^{-4} \sum_{|k|>k_0} |k|^{-6} \le
  (2\pi)^{-3}  \cdot (2-\frac 1 {\sqrt 2})^{-4} <0.0015.
\end{align*}
Thus
\begin{align*}
\Bigl( 
&\sum_{0\ne k \in \mathbb Z^2}\Bigl(
\frac {2\tau (2\pi |k|)^2} {1+ \tau (2\pi |k|)^4 } \Bigr)^2  (2\pi |k|)^{-2}
 \Bigr)^{\frac 12} 
 \le \frac 1 {2\pi} (0.0015+0.0015)^{\frac 12} <0.00872.
 \end{align*}

Case 2: $0<k_0<2$. In this regime $\tau>(4\pi)^{-4}$. 

\begin{align*}
\sum_{0<|k|<2}
(1+\tau (2\pi |k|)^4)^{-2} |k|^{-2}
& = 4 (1+ \tau (2\pi)^4)^{-2} +4 \cdot \frac 12 \cdot (1+\tau (2\pi \sqrt 2)^4)^{-2} 
 \notag \\
 &\le 4 (1+\frac 1{16})^{-2} + 2 \cdot (1+\frac 14)^{-2} \le 4.83.
 \end{align*}
 
 \begin{align*}
\sum_{|k|\ge 2}
(1+\tau (2\pi |k|)^4)^{-2} |k|^{-2}
& \le \tau^{-2} (2\pi)^{-8} \sum_{|k|\ge 2} |k|^{-10}
\le (4\pi)^8 (2\pi)^{-7} \cdot \frac 1 8 (2-\frac 1 {\sqrt 2})^{-8} 
 \notag \\
 & = 2\pi \cdot \frac 18\cdot  (1- \frac 1{2 \sqrt{2} } )^{-8}
 \le 25.76.
\end{align*}
Thus
\begin{align*}
\Bigl(
\sum_{0\ne k \in \mathbb Z^2}
(1+\tau (2\pi |k|)^4)^{-2} (2\pi |k|)^{-2}\Bigr)^{\frac 12}
& \le \frac 1 {2\pi} \sqrt{4.83+25.76} < 0.8803.
\end{align*}

Now observe
\begin{align*}
\sum_{0<|k|< 2}\Bigl(
\frac {2\tau (2\pi |k|)^2} {1+ \tau (2\pi |k|)^4 } \Bigr)^2  |k|^{-2}
 &= 4 \tau^2 (2\pi)^4 \Bigl( 4 (1+ \tau (2\pi)^4)^{-2} +8  (1+\tau (2\pi \sqrt 2)^4)^{-2}\Bigr) \\
 &=\pi^{-4} x^2 ( (1+x)^{-2} +2 (1+4x)^{-2})<0.012, \quad \text{for any $x>\frac 1 {16}$},
 \end{align*}
 where we have denoted $x= (2\pi)^4 \tau >\frac 1{16}$. 
 
 On the other hand, we have
 \begin{align*}
&\sum_{|k|\ge  2}\Bigl(
\frac {2\tau (2\pi |k|)^2} {1+ \tau (2\pi |k|)^4 } \Bigr)^2  |k|^{-2}
 \le 4 \cdot (2\pi)^{-4} \sum_{|k|\ge 2} |k|^{-6} \le
  (2\pi)^{-3}  \cdot (2-\frac 1 {\sqrt 2})^{-4} <0.0015.
\end{align*}

Thus
\begin{align*}
\Bigl( 
&\sum_{0\ne k \in \mathbb Z^2}\Bigl(
\frac {2\tau (2\pi |k|)^2} {1+ \tau (2\pi |k|)^4 } \Bigr)^2  (2\pi |k|)^{-2}
 \Bigr)^{\frac 12} 
 \le \frac 1 {2\pi} (0.012+0.0015)^{\frac 12} <0.0185.
 \end{align*}

Finally to estimate $\Bigl(\sum_{0\ne k \in \mathbb Z^2}
(\frac {1-2\tau (2\pi |k|)^2} {1+ \tau (2\pi |k|)^4 } )^2 \frac 1 {(2\pi |k|)^2}
\Bigr)^{\frac 12}$, we just  use the triangle inequality $$\| A+B\|_{l_k^2}
\le \| A\|_{l_k^2} +\|B\|_{l_k^2}.$$

\end{proof}

\begin{thm}[Conditional energy stability for 2D $\nu=1$, second approach] \label{thmnu1b}
Let $d=2$, $\nu=1$ and $N\ge 2$.  Assume $u_0\in H^1(\mathbb T^2)$ and
has zero mean. Recall $u^0=\Pi_N u_0$ and assume $\|u^0\|_{\infty} \le 1.25$.
Take
\begin{align*}
 \tau_{\operatorname{max}} =
\min\Bigl\{ \frac 12, \;   \frac 1 {B_1 (\ln B_1)^2},\;  \frac 89 \frac 1 {B_2} \Bigr\},
\end{align*}
where 
\begin{align*}
&B_1= \frac {3.1}{99.8} (\sqrt {2E_0}  + 0.19676 E_0)^{4}, \\
& B_2=(0.8988 \sqrt {2E_0} + 0.18692 E_0)^4. 
\end{align*}

Then for any $0<\tau\le \tau_{\operatorname{max}}$, the scheme
\eqref{semi_e2} is energy stable, i.e.  
\begin{align*}
E(u^{n+1}) \le E(u^n), \qquad\forall\, n\ge 0.
\end{align*}

\end{thm}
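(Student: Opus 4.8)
The plan is to reuse verbatim the inductive scaffolding of Theorem \ref{thmnu1a}: by Lemma \ref{z2}, energy monotonicity $E_{n+1}\le E_n$ holds as soon as
\begin{align*}
\frac 12 + \sqrt{\frac 2\tau} \ge \frac 32 \max\bigl\{\|u^n\|_\infty^2,\,\|u^{n+1}\|_\infty^2\bigr\},
\end{align*}
so everything reduces to a sharp bound on $\|u^{n+1}\|_\infty$ in terms of $E_n$. The difference from the first approach is that, instead of the Sobolev inequalities of Lemmas \ref{m007} and \ref{m011}, I would feed the resolvent formula
\begin{align*}
u^{n+1}= \frac{1+2\tau\Delta}{1+\tau\Delta^2}\,u^n + \frac{\tau\Delta}{1+\tau\Delta^2}\,\Pi_N\bigl((u^n)^3-3u^n\bigr)
\end{align*}
directly into the two Fourier-multiplier sums estimated in Lemma \ref{m400}.

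For the linear part, Cauchy--Schwarz in frequency with the weight $(2\pi|k|)^{-1}$ converts the multiplier $\frac{1-2\tau(2\pi|k|)^2}{1+\tau(2\pi|k|)^4}$ into precisely the second sum of Lemma \ref{m400}, giving $\|\frac{1+2\tau\Delta}{1+\tau\Delta^2}u^n\|_\infty\le h(\tau)\,\|u^n\|_{\dot H^1}\le h(\tau)\sqrt{2E_n}$ since $\frac12\|u^n\|_{\dot H^1}^2\le E_n$. For the nonlinear part with $w=(u^n)^3-3u^n$, I would instead pair the multiplier $\frac{\tau(2\pi|k|)^2}{1+\tau(2\pi|k|)^4}$ against $(\hat w(k))_{k\ne0}$ with no weight. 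The elementary inequality $\frac{\tau(2\pi|k|)^2}{1+\tau(2\pi|k|)^4}\le(2\pi|k|)^{-2}$, valid for all $\tau>0$, makes this multiplier norm bounded uniformly in $\tau$ by $(\sum_{k\ne0}(2\pi|k|)^{-4})^{1/2}\le\frac{\sqrt{6.05}}{(2\pi)^2}$ (Lemma \ref{m011}), while $\|w-\bar w\|_2\le 3E_n$ is exactly the energy estimate already extracted in the proof of Lemma \ref{m100}. Together these give the clean bound
\begin{align*}
\|u^{n+1}\|_\infty\le h(\tau)\sqrt{2E_n}+0.18692\,E_n, \qquad 0.18692=3\cdot\tfrac{\sqrt{6.05}}{(2\pi)^2}.
\end{align*}

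It remains to insert the two regimes of $h(\tau)$ from Lemma \ref{m400}. When $\tau>(4\pi)^{-4}$ one has $h(\tau)\le0.8988$, so $\|u^{n+1}\|_\infty^2\le\sqrt{B_2}$; dropping the harmless $\frac12$ and solving $\sqrt{2/\tau}\ge\frac32\sqrt{B_2}$ returns the constraint $\tau\le\frac89 B_2^{-1}$. When $\tau\le(4\pi)^{-4}$ one has $h(\tau)\le(-\frac1{8\pi}\ln\tau+0.52)^{1/2}+0.00872$; factoring out $P(\tau)=(-\frac1{8\pi}\ln\tau+0.52)^{1/2}$ and using $P(\tau)\ge P((4\pi)^{-4})$ to absorb the additive $0.00872$ and the nonlinear coefficient produces $\|u^{n+1}\|_\infty\le P(\tau)\bigl(\sqrt{2E_n}+0.19676\,E_n\bigr)$, whereupon the required inequality becomes the transcendental relation $\sqrt{2/\tau}\ge\frac32 P(\tau)^2(\sqrt{2E_n}+0.19676\,E_n)^2$. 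Balancing $\sqrt{1/\tau}$ against $\ln(1/\tau)$ is exactly what the closed form $\tau\le\frac1{B_1(\ln B_1)^2}$ achieves, with the prefactor $\frac{3.1}{99.8}$ in $B_1$ supplying the numerical slack. Taking the minimum of $\frac12$, $\frac89 B_2^{-1}$ and $\frac1{B_1(\ln B_1)^2}$ guarantees the correct inequality in whichever regime $\tau$ happens to fall.

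Finally I would close the induction exactly as in Theorem \ref{thmnu1a}: the base case $n=0$ uses $\|u^0\|_\infty\le1.25$ together with $\tau\le\frac12$ to control $\|u^0\|_\infty$, and then the $L^\infty$ bound above to control $\|u^1\|_\infty$; the inductive step invokes the already-established monotonicity $E_n\le E_0$ so that the bound $\|u^{n+1}\|_\infty\le h(\tau)\sqrt{2E_0}+0.18692\,E_0$ stays uniform in $n$. The genuinely delicate point is the small-$\tau$ analysis: extracting the explicit closed form $\frac1{B_1(\ln B_1)^2}$ from the transcendental inequality while keeping all constants honest is where the real work lies, whereas the large-$\tau$ branch and the (pleasantly $\tau$-uniform) nonlinear bound are routine.
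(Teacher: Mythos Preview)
Your proposal is correct and matches the paper's proof essentially line for line: the same resolvent splitting, the same use of Lemma~\ref{m400} for the linear piece yielding $h(\tau)\sqrt{2E_n}$, the same $\tau$-uniform bound $\frac{\sqrt{6.05}}{(2\pi)^2}\cdot 3E_n=0.18692\,E_n$ on the nonlinear piece via Lemmas~\ref{m011} and~\ref{m100}, and the same two-regime case analysis on $\tau$. The paper carries out the small-$\tau$ branch by factoring out $f_0(\tau)=P(\tau)+0.00872$ (rather than $P(\tau)$ itself), then bounding $f_0(\tau)\le 1.54(-\tfrac1{8\pi}\ln\tau)^{1/2}$ and invoking a short auxiliary lemma (Lemma~\ref{lm4.3a}) together with a further split on whether $E_0\gtrless 4.57916$ to extract the closed form $\tau\le \frac1{B_1(\ln B_1)^2}$; this is exactly the ``real work'' you flagged.
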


\begin{proof}
The proof is similar to that in Theorem \ref{thmnu1a} and we only sketch the needed
modifications.  Note that since by assumption $0<\tau\le \frac 12$, we have
\begin{align*}
\frac 12 + \sqrt{\frac 2 {\tau}} \ge 2.5 \ge \frac 32 \|u^0\|_{\infty}^2.
\end{align*}
We shall focus on the induction step (since the estimate for $u^1$ is the same as the
estimate for $u^{n+1}$ below).
Our inductive hypothesis is  for $n \ge 1$, 
\begin{align*}
&E_n \le E_{n-1}; \\
&\| u^n \|_{\infty}
\le \sqrt{2E_0} h(\tau) + 0.18692 E_0, 
\end{align*}
where $h(\tau)$ is the same as in Lemma \ref{m400}. By Lemma \ref{m400}, we have
\begin{align*}
\| \frac {1+2\tau \Delta}{1+\tau \Delta^2} u^n \|_{\infty}
 \le h(\tau) \| u^n \|_{\dot H^1} \le \sqrt{2E_0} h(\tau).
 \end{align*}
 On the other hand by Lemma \ref{m011} and \ref{m100}, we have
 \begin{align*}
\|\frac {\tau \Delta}{1+\tau \Delta^2} \Pi_N  ( (u^n)^3- 3u^n )\|_{\infty}
\le \frac{\sqrt{6.05}} {(2\pi)^2} \cdot 3E_0 \le 0.18692 E_0.
\end{align*}
Clearly then 
\begin{align*}
\| u^{n+1}\|_{\infty}
\le \sqrt{2E_0} h(\tau) + 0.18692 E_0.
\end{align*}
It remains for us to verify $E_{n+1} \le E_n$. This amounts to checking 
\begin{align} \notag 
\frac 12 + \sqrt {\frac 2 {\tau}} \ge \frac 32 \max\{ \| u^n \|_{\infty}^2, \,
\| u^{n+1}\|_{\infty}^2 \}.
\end{align}
It suffices for us to prove
\begin{align*}
\sqrt{ (\frac 12 +\sqrt{ \frac 2 {\tau}} ) \cdot \frac 23 }
\ge \sqrt {2E_0} h(\tau) + 0.18692 E_0.
\end{align*}

Case 1: $\tau>(4\pi)^{-4}$. We only need to show
\begin{align*}
\sqrt{ (\frac 12 +\sqrt{ \frac 2 {\tau}} ) \cdot \frac 23 }
\ge \; 0.8988 \sqrt {2E_0} + 0.18692 E_0,
\end{align*}
or in a slightly simpler form:
\begin{align*}
\sqrt{\frac 2 {\tau}} \ge 
(0.8988 \sqrt {2E_0} + 0.18692 E_0)^2 \cdot \frac 32 - \frac 12.
\end{align*}
Thus it is sufficient to require
\begin{align*}
\tau \le \frac 89  \frac 1 {A_1},
\end{align*}
where
\begin{align*}
A_1=(0.8988 \sqrt {2E_0} + 0.18692 E_0)^4.
\end{align*}

Case 2: $0<\tau \le (4\pi)^{-4}$.  Then we need to show
\begin{align*}
\sqrt{ (\frac 12 +\sqrt{ \frac 2 {\tau}} ) \cdot \frac 23 }
\ge \sqrt {2E_0} \underbrace{\Bigl((- \frac 1 {8\pi} \ln \tau +0.52)^{\frac 12}+0.00872\Bigr)}_{=:f_0(\tau)} + 0.18692 E_0.
\end{align*}
It is not difficult to check that
\begin{align*}
f_0(\tau) \ge 0.95, \qquad \forall\;0<\tau\le (4\pi)^{-4}.
\end{align*}
Also $0.18692/0.95<0.19676$. Thus it suffices for us to show
\begin{align*}
\sqrt{ (\frac 12 +\sqrt{ \frac 2 {\tau}} ) \cdot \frac 23 } \cdot \frac 1 {f_0(\tau)}
\ge (\sqrt {2E_0}  + 0.19676 E_0).
\end{align*}
We only need to show
\begin{align*}
\frac 8 9 \cdot (\sqrt {2E_0}  + 0.19676 E_0)^{-4} \ge 
\tau f_0(\tau)^4.
\end{align*}
It is not difficult to check that
\begin{align*}
f_0(\tau) \le 1.54 (-\frac 1{8\pi} \ln \tau)^{\frac 12}, \qquad \forall\, 0<\tau\le (4\pi)^{-4}.
\end{align*}
We then only need to prove
\begin{align*}
99.8 (\sqrt {2E_0}  + 0.19676 E_0)^{-4} \ge 
\tau (\ln \tau)^2.
\end{align*}
Now discuss two cases.

Case a): $E_0 \ge 4.57916$. In this case we have
\begin{align*}
99.8 (\sqrt {2E_0}  + 0.19676 E_0)^{-4} \le 0.419538< 3.1 e^{-2}\approx 0.419539,
\end{align*}
By using Lemma \ref{lm4.3a} below, it suffices for us to require
\begin{align*}
0<\tau \le  \frac 1 {B_1 (\ln B_1)^2},
\end{align*}
where 
\begin{align*}
B_1= \frac {3.1}{99.8} (\sqrt {2E_0}  + 0.19676 E_0)^{4}.
\end{align*}

Case b): $E_0< 4.57916$. 
In this case we have
\begin{align*}
99.8 (\sqrt {2E_0}  + 0.19676 E_0)^{-4} \ge  0.418.
\end{align*}
It suffices to require $0<\tau\le 0.04$ since
\begin{align*}
\sup_{0<\tau\le 0.04} \tau (\ln \tau)^2 < 0.41445<0.418.
\end{align*}
Now recall that we are in the sub-case $0<\tau<(2\pi)^{-4} <0.04$, so this condition is
certainly satisfied.

\end{proof}

\begin{lem} \label{lm4.3a}
Consider $h(x)= x (\ln x)^2$ for $0<x \le 1 $.  If $0<x\le e^{-2}$,
then $h^{\prime}(x)\ge 0$.   If $A \ge e^2/3.1\approx 2.38357$ and
\begin{align*}
0<x \le \frac 1 {B (\ln B)^2},\quad B=3.1A,
\end{align*}
then 
\begin{align*}
h(x) \le \frac {1} A.
\end{align*} 
\end{lem}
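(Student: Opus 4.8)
The plan is to treat the two assertions separately, establishing the monotonicity claim first and then leveraging it to derive the quantitative bound.

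First I would compute the derivative directly:
\begin{align*}
h'(x) = (\ln x)^2 + 2\ln x = \ln x\,(\ln x + 2).
\end{align*}
For $0 < x \le e^{-2}$ one has $\ln x \le -2$, so both factors $\ln x$ and $\ln x + 2$ are nonpositive; hence their product $h'(x)$ is nonnegative. This settles the first assertion and, more usefully, shows that $h$ is nondecreasing on $(0,e^{-2}]$, with $h(0^+)=0$.

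Next I would verify that the endpoint $x^{*}:=\frac{1}{B(\ln B)^2}$ lies inside the interval of monotonicity. Since $A \ge e^2/3.1$ forces $B = 3.1A \ge e^2$, we get $\ln B \ge 2$ and therefore $B(\ln B)^2 \ge 4e^2 > e^2$, i.e. $x^{*} < e^{-2}$. Consequently, for every $x \in (0,x^{*}]$ monotonicity gives $h(x) \le h(x^{*})$, so it suffices to bound $h(x^{*})$. The core of the argument is to show $h(x^{*}) \le 1/A = 3.1/B$. Writing $\ln x^{*} = -\ln B - 2\ln\ln B$ and substituting into $h(x^{*}) = x^{*}(\ln x^{*})^2$, the desired inequality becomes, after multiplying through by $B$,
\begin{align*}
\left(1 + \frac{2\ln\ln B}{\ln B}\right)^2 \le 3.1,
\end{align*}
equivalently $\frac{\ln\ln B}{\ln B} \le \frac{\sqrt{3.1}-1}{2} \approx 0.3803$. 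Setting $t = \ln B \ge 2$, the left side equals $\frac{\ln t}{t}$, and the elementary calculus fact $\sup_{t>0}\frac{\ln t}{t} = 1/e \approx 0.3679$ (critical point at $t=e$) closes the estimate with a small margin to spare.

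The only step requiring genuine care is this final numerical comparison: the whole lemma hinges on the constant $3.1$ being large enough that $\frac{\sqrt{3.1}-1}{2}$ strictly exceeds the global maximum $1/e$ of $\frac{\ln t}{t}$. The algebraic reductions and the maximization of $\frac{\ln t}{t}$ are routine, so I expect no serious obstacle here—merely the bookkeeping of confirming that the chosen constant leaves the required slack, and that the factor of $3.1$ is precisely what propagates back (via $B=3.1A$) to yield the clean bound $1/A$.
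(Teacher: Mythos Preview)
Your proposal is correct and follows essentially the same route as the paper: establish monotonicity on $(0,e^{-2}]$, note that $B\ge e^2$ places the endpoint $x^*=1/(B(\ln B)^2)$ in this interval, and then reduce to the inequality $(1+2\ln\ln B/\ln B)^2\le 3.1$. The paper merely asserts this last inequality is ``not difficult to check,'' whereas you supply the clean verification via $\sup_{t>0}(\ln t)/t=1/e<(\sqrt{3.1}-1)/2$; otherwise the arguments coincide.
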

\begin{proof}
The monotonicity of $h$ is easy to check.
It is also not difficult to check that if $B\ge e^2$, then
\begin{align*}
(\frac { \ln (B (\ln B)^2)} { \ln B} )^2= (1+\frac{2\ln\ln B}{\ln B})^2 \le 3.1.
\end{align*}
By monotonicity of $h$, we then have for any $0<x\le  \frac 1 {B(\ln B)^2}$,
\begin{align*}
h(x) \le h( \frac 1 {B(\ln B)^2})= \frac 1 B (1+\frac{2\ln\ln B}{\ln B})^2 \le
 \frac {3.1} B.
\end{align*}
\end{proof}

\section{Proof for general $\nu>0$: 2D case}
In this section we consider the general case $\nu>0$ in 2D. 
Recall 
\begin{align} \label{semi_e3}
\frac{u^{n+1}-u^n}{\tau} = - \nu  \Delta^2 u^{n+1} +\Delta\Pi_N ( f(u^n) ), \quad n\ge 0.
\end{align}
We rewrite it as
\begin{align*}
(1+ \nu\tau \Delta^2) u^{n+1} = u^n + \tau \Delta \Pi_N  ( f(u^n) ).
\end{align*}

\begin{lem} \label{lem2Dtau1}
Let $N\ge 2$, $d=2$ and $\nu>0$. Let $\tau>0$. Then for any $g\in L^4(\mathbb T^2)$
 with zero mean,
we have
\begin{align*}
&\| (1+\nu \tau \Delta^2)^{-1}  g \|_{\infty} \le C_1 (\nu \tau)^{-\frac 18}
\|g\|_4; 
\end{align*}
For any $g_1\in L^{\frac 43}(\mathbb T^2)$, we have
\begin{align*}
& \| \tau \Delta (1+\nu \tau \Delta^2)^{-1} \Pi_N g_1\|_{\infty}
\le C_2  \tau (\nu \tau)^{-\frac 78} \| g_1\|_{\frac 43}.
\end{align*}
In the above $C_1>0$, $C_2>0$ are absolute constants.

\end{lem}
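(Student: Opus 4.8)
The plan is to read both bounds as convolution estimates. Each operator is convolution against a Fourier-multiplier kernel depending only on $\beta:=\nu\tau$, so the strategy is uniform: apply Young's inequality and reduce to an $L^p$ bound on the kernel. The first estimate will follow almost verbatim from Lemma \ref{leKbeta}, while the second forces a genuinely new $L^4$ bound on a Laplacian-modified kernel, which is the real content of the lemma.

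For the first inequality, note that $(1+\nu\tau\Delta^2)^{-1}$ is convolution against the kernel $K$ of Lemma \ref{leKbeta} with $\beta=\nu\tau$. Since $g$ has mean zero its constant Fourier mode is absent, so we may replace $K$ by its zero-mean version $\widetilde K$, giving $(1+\nu\tau\Delta^2)^{-1}g=\widetilde K\ast g$. Young's inequality $\|\widetilde K\ast g\|_\infty\le\|\widetilde K\|_{4/3}\|g\|_4$ then reduces matters to Lemma \ref{leKbeta} with $d=2$, $p=\tfrac43$, which yields
\begin{align*}
\|\widetilde K\|_{L^{4/3}(\mathbb T^2)}\lesssim \beta^{-2\left(\frac14-\frac1{4\cdot 4/3}\right)}=\beta^{-\frac18}=(\nu\tau)^{-\frac18}.
\end{align*}
This is exactly the claimed bound with $C_1=\tilde c_{2,4/3}$.

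For the second inequality, write the operator as convolution against $\tau G$, where $G$ is the kernel with Fourier multiplier $\dfrac{-(2\pi|k|)^2}{1+\beta(2\pi|k|)^4}$, $\beta=\nu\tau$; note $G$ automatically has mean zero because the multiplier vanishes at $k=0$. Since $\Pi_N$ is a product of one-dimensional Hilbert-type transforms, it is bounded on $L^{4/3}$ uniformly in $N$ (as used in the proof of Proposition \ref{prop_Eu}), so Young's inequality $\|(\tau G)\ast(\Pi_N g_1)\|_\infty\le\|\tau G\|_4\|\Pi_N g_1\|_{4/3}$ reduces everything to the kernel bound $\|G\|_{L^4(\mathbb T^2)}\lesssim \beta^{-7/8}$. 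Here I would \emph{not} interpolate between $L^2$ and $L^\infty$, because $G$ has a logarithmic singularity at the origin and is not bounded. Instead I would invoke the Hausdorff--Young inequality $\|G\|_{L^4}\le\|\widehat G\|_{\ell^{4/3}}$ recorded in the remark after Lemma \ref{leKbeta}, reducing to
\begin{align*}
\|\widehat G\|_{\ell^{4/3}}^{4/3}=\sum_{k\ne 0}\Bigl(\frac{(2\pi|k|)^2}{1+\beta(2\pi|k|)^4}\Bigr)^{4/3}.
\end{align*}
Viewed as a function of a continuous variable, the summand peaks at $|k|\sim\beta^{-1/4}$ with height $\sim\beta^{-2/3}$ and decays like $|k|^{-8/3}$; the substitution $\eta=\beta^{1/4}k$ shows the corresponding integral scales as $\beta^{-7/6}$, and a routine sum-to-integral comparison (splitting into $|k|\lesssim\beta^{-1/4}$ and the convergent tail $|k|\gtrsim\beta^{-1/4}$, the latter handled as in Lemma \ref{m005}, with the large-$\beta$ regime being trivial) gives $\|\widehat G\|_{\ell^{4/3}}\lesssim\beta^{-7/8}$. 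Combining the pieces yields $\|\tau G\|_4\lesssim\tau(\nu\tau)^{-7/8}$ and hence the claim with an absolute constant $C_2$.

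The routine parts are the two applications of Young's inequality and the convergence and scaling of the integrals. The main obstacle is precisely the $L^4$ bound on $G$: the logarithmic singularity at the origin rules out the naive $L^2$--$L^\infty$ interpolation, so one is forced onto the frequency side via Hausdorff--Young and must control a non-monotone multiplier sum by a careful comparison with its integral. This same mechanism, namely the scaling $\eta=\beta^{1/4}k$ combined with Hausdorff--Young, is what produces the sharp exponent $-\tfrac78$, matching the heuristic scaling $\|\Psi_\beta\|_{L^4(\mathbb R^2)}\sim\beta^{-1+\frac18}=\beta^{-7/8}$ of the underlying full-space kernel.
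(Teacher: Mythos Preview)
Your proof is correct and follows essentially the same route as the paper: Young's inequality together with the $\widetilde K$ bound from Lemma \ref{leKbeta} for the first estimate, and Hausdorff--Young $\|K_\beta\|_4\le\|\widehat{K_\beta}\|_{\ell^{4/3}}\lesssim\beta^{-3/8}$ for the second. The only cosmetic difference is that the paper absorbs the projection $\Pi_N$ into the kernel (so the $\ell^{4/3}$ sum is automatically uniform in $N$), whereas you leave $\Pi_N$ on the data side and invoke its uniform $L^{4/3}$-boundedness separately; both are valid.
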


\begin{rem*}
For the first estimate a similar estimate holds if the spectral projection $\Pi_N$ is  present. 
In our application later we do not need it since $u^n$ is already spectrally localized.
The operator $\Pi_N$ can  also be replaced by more general projection operators.
\end{rem*}
\begin{rem*}
Remarkably if we use the $\dot H^1$-norm which seems to be stronger, it will
incur a logarithmic loss for $\nu$.  The adoption of $L^4$ (for the homogeneous term)
and $L^{\frac 43}$ (for the inhomogeneous term) removes this divergence. 
A further refinement is possible by using $\dot H^1$ for the high frequency piece.  
This will lower some of the exponents on the constants such as $\alpha_1$, $\alpha_2$ 
in the proof of Theorem \ref{thm2dnu} below. However for simplicity of presentation we shall
not dwell on this issue here.
\end{rem*}

\begin{proof}
Denote $\beta=\nu \tau$.  The first inequality follows from Lemma
\ref{leKbeta} (see the bound for $\tilde K$ therein). For the second inequality denote 
\begin{align*}
K_{\beta}=\mathcal F^{-1}(
\frac {\beta^{\frac 12} (2\pi |k|)^2} {1+ \beta (2\pi |k|)^4} 1_{|k|_{\infty} \le N} ).
\end{align*}
We then have
\begin{align*}
\| K_{\beta} \|_4 \le \| \widehat{K_{\beta} } \|_{l_k^{\frac 43}} \lesssim
\; \beta^{-\frac 38}.
\end{align*}

\end{proof}

\begin{lem} \label{l_pot1}
Let $d\ge 1$.
If $E_p= \int_{\mathbb T^d} \frac 14 (v^2-1)^2 dx$, then
\begin{align*}
&\| v\|_{L^4(\mathbb T^d)} \le \sqrt{1+2 \sqrt{E_p}};\\
&\| v^3 -v \|_{L^{\frac 43}(\mathbb T^d)} \le  2\sqrt {E_p} \sqrt{1+2 \sqrt{E_p}}.
\end{align*}

\end{lem}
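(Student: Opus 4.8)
The plan is to extract both estimates directly from the definition of $E_p$, using only the triangle inequality in $L^2$ together with a single application of H\"older's inequality, and exploiting the fact that $\mathbb{T}^d$ has unit Lebesgue measure so that $\|1\|_{L^2(\mathbb{T}^d)}=1$. The one identity driving everything is that $E_p=\frac14\|v^2-1\|_{L^2}^2$, equivalently $\|v^2-1\|_{L^2(\mathbb{T}^d)}=2\sqrt{E_p}$, which I would record first.

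For the first bound I would observe that $\|v\|_{L^4}^2=\|v^2\|_{L^2}$ and then write $v^2=(v^2-1)+1$. The triangle inequality gives $\|v^2\|_{L^2}\le\|v^2-1\|_{L^2}+\|1\|_{L^2}=2\sqrt{E_p}+1$, where the unit-measure normalization of the torus is exactly what makes the last term equal to $1$ rather than a volume factor. Taking square roots yields $\|v\|_{L^4}\le\sqrt{1+2\sqrt{E_p}}$ as claimed.

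For the second bound I would factor the nonlinearity as $v^3-v=v(v^2-1)$ and apply H\"older's inequality with the exponent split $\tfrac34=\tfrac14+\tfrac12$, i.e. $\|v(v^2-1)\|_{L^{4/3}}\le\|v\|_{L^4}\,\|v^2-1\|_{L^2}$. Substituting the two facts already established, namely $\|v\|_{L^4}\le\sqrt{1+2\sqrt{E_p}}$ and $\|v^2-1\|_{L^2}=2\sqrt{E_p}$, immediately gives $\|v^3-v\|_{L^{4/3}}\le 2\sqrt{E_p}\,\sqrt{1+2\sqrt{E_p}}$.

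Since each step is elementary, there is no genuine analytic obstacle here; the only points requiring care are the bookkeeping of the conjugate exponents in H\"older (verifying $\tfrac{1}{4/3}=\tfrac14+\tfrac12$) and remembering to use the unit normalization of $\mathbb{T}^d$ so that the constant term contributes exactly $1$. This clean, dimension-independent packaging is precisely what makes the lemma convenient for feeding the $L^4$ and $L^{4/3}$ quantities into the smoothing estimates of Lemma \ref{lem2Dtau1}.
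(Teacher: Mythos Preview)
Your proposal is correct and follows essentially the same route as the paper: the paper's proof is the single line ``Obvious. For the second inequality, note that $\|(v^2-1)v\|_{4/3}\le\|v^2-1\|_2\|v\|_4$,'' and your argument simply unpacks the ``obvious'' first part via $\|v^2\|_2\le\|v^2-1\|_2+1$ and then applies the identical H\"older split for the second.
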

\begin{proof}
Obvious.
For the second inequality, note that 
$\| (v^2-1)v \|_{\frac 43}\le \| v^2-1\|_2 \|v\|_4$.
\end{proof}

\begin{thm}[Conditional energy stability for 2D $\nu>0$] \label{thm2dnu}
Let $d=2$, $\nu>0$, $N\ge 2$. Assume $u_0 \in H^1(\mathbb T^2)$ and
has zero mean. Assume $\| u^0\|_{\infty} =L_0<\infty$. Take
\begin{align*}
\tau_{\operatorname{max}}
= \min\Bigl\{ \frac {8 \nu} {9 L_0^4}, \, 
\tau_{\operatorname{max}}^{(1)} \Bigr\},
\end{align*}
where
\begin{align*}
 \tau_{\operatorname{max}}^{(1)} 
= 0.04\nu^3 \min\{\alpha_1^{-8},\, \alpha_2^{-\frac 83} \},
\end{align*}
and 
\begin{align*}
&\alpha_1= C_1 \sqrt{1+2\sqrt{E_0}};\\
& \alpha_{2}= 2C_2 \sqrt{E_0} \cdot  \sqrt{1+2 \sqrt{E_0}}.
\end{align*}
In the above $C_1$, $C_2$ are the same constants in Lemma \ref{lem2Dtau1}.
Then for any $0<\tau\le \tau_{\operatorname{max}}$, the scheme
\eqref{semi_e3} is energy stable, i.e.  
\begin{align*}
E(u^{n+1}) \le E(u^n), \qquad\forall\, n\ge 0.
\end{align*}

\end{thm}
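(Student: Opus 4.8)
The plan is to run the three-step TEFL scheme on an induction in $n$, carrying the inductive hypotheses $E_n\le E_0$ together with the $L^\infty$ bound on $u^n$ produced in Step~2 below. The role of the two thresholds in $\tau_{\operatorname{max}}$ is different: the constraint $\tau\le \frac{8\nu}{9L_0^4}$ is designed solely to start the induction, since $\|u^0\|_\infty=L_0$ forces
\[
\sqrt{\tfrac{2\nu}{\tau}}\ge\sqrt{\tfrac{2\nu\cdot 9L_0^4}{8\nu}}=\tfrac32 L_0^2=\tfrac32\|u^0\|_\infty^2,
\]
so the $u^0$-contribution to the right-hand side of Lemma~\ref{z2} is already under control at the base step. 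The threshold $\tau_{\operatorname{max}}^{(1)}$ will uniformly control the $u^{n+1}$-contribution.

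For the core inductive step I rewrite the scheme as
\[
u^{n+1}=(1+\nu\tau\Delta^2)^{-1}u^n+\tau\Delta(1+\nu\tau\Delta^2)^{-1}\Pi_N f(u^n)
\]
and estimate the two pieces separately. The first bound of Lemma~\ref{lem2Dtau1} applied with $g=u^n$, combined with the $L^4$ bound of Lemma~\ref{l_pot1} (using that the double-well part of the energy is $\le E_n\le E_0$), gives $\|(1+\nu\tau\Delta^2)^{-1}u^n\|_\infty\le \alpha_1(\nu\tau)^{-1/8}$. The second bound of Lemma~\ref{lem2Dtau1} applied with $g_1=f(u^n)=(u^n)^3-u^n$, together with the $L^{4/3}$ bound of Lemma~\ref{l_pot1}, gives $\|\tau\Delta(1+\nu\tau\Delta^2)^{-1}\Pi_N f(u^n)\|_\infty\le \alpha_2\,\nu^{-7/8}\tau^{1/8}$, where I have used $\tau(\nu\tau)^{-7/8}=\nu^{-7/8}\tau^{1/8}$ and the constants $\alpha_1,\alpha_2$ are exactly those in the statement. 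Summing yields
\[
\|u^{n+1}\|_\infty\le \alpha_1(\nu\tau)^{-1/8}+\alpha_2\,\nu^{-7/8}\tau^{1/8}.
\]

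For Step~3, by Lemma~\ref{z2} it suffices to force $\tfrac32\|u^{n+1}\|_\infty^2\le \sqrt{2\nu/\tau}$ (the spare $\tfrac12$ only helps). Squaring the displayed bound and matching powers of $\nu,\tau$, the homogeneous term demands $\tau\lesssim \nu^3\alpha_1^{-8}$ (since $(\nu\tau)^{-1/4}\lesssim\sqrt{\nu/\tau}$ reduces to $\tau^{1/4}\lesssim\nu^{3/4}$), while the nonlinear term demands $\tau\lesssim\nu^3\alpha_2^{-8/3}$ (since $\nu^{-7/4}\tau^{1/4}\lesssim\sqrt{\nu/\tau}$ reduces to $\tau^{3/4}\lesssim\nu^{9/4}\alpha_2^{-2}$). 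Tracking the explicit numerical constants through this matching produces the form $\tau_{\operatorname{max}}^{(1)}=0.04\,\nu^3\min\{\alpha_1^{-8},\alpha_2^{-8/3}\}$. Once $\tfrac32\|u^{n+1}\|_\infty^2\le\sqrt{2\nu/\tau}$ is secured, the same bound holds for $u^n$ as well (for $n\ge1$ by the inductive hypothesis, and for $n=0$ by the base step), so $\tfrac32\max\{\|u^n\|_\infty^2,\|u^{n+1}\|_\infty^2\}\le\tfrac12+\sqrt{2\nu/\tau}$ and Lemma~\ref{z2} gives $E_{n+1}\le E_n\le E_0$, closing the induction.

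The main obstacle is Step~2: obtaining the $L^\infty$ estimate with the \emph{sharp} exponents $(\nu\tau)^{-1/8}$ and $\nu^{-7/8}\tau^{1/8}$, which is precisely what makes $\tau_{\operatorname{max}}^{(1)}$ scale like $\nu^3$ with no logarithmic correction. The decisive point is to measure the homogeneous data in $L^4$ and the nonlinearity in $L^{4/3}$ (the scaling-critical pairing for the quartic potential in two dimensions) rather than in $\dot H^1$; as the remark after Lemma~\ref{lem2Dtau1} indicates, the $\dot H^1$ route introduces a spurious $\log\nu$ that would spoil the clean power law. Everything else is bookkeeping: propagating $E_n\le E_0$ along the energy decay and verifying the numerology that converts the two scaling constraints into the stated $\tau_{\operatorname{max}}^{(1)}$.
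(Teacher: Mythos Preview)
Your proposal is correct and follows essentially the same approach as the paper: induction, the base step handled by $\tau\le 8\nu/(9L_0^4)$, the $L^\infty$ bound on $u^{n+1}$ via Lemma~\ref{lem2Dtau1} combined with Lemma~\ref{l_pot1}, and the scaling substitution $\tau=\nu^3 r$ to extract the $\nu^3$ threshold. The one place where the paper is more explicit than your outline is the derivation of the constant $0.04$: rather than squaring and splitting, the paper works at the level of $\|u^{n+1}\|_\infty$ and allocates the bound $2^{3/4}3^{-1/2}$ between the two terms via a parameter $\eta\in(0,1)$, optimized at $\eta\approx 0.690119$ so that $(\eta\,2^{3/4}3^{-1/2})^8\approx((1-\eta)\,2^{3/4}3^{-1/2})^{8/3}\approx 0.04$; a cruder split (e.g.\ $(a+b)^2\le 2a^2+2b^2$) would yield a smaller constant.
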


\begin{proof}
We shall use induction.

Step 1. The base step $n=0$. 
Thanks to our choice of $\tau$, we clearly have for $n=0$,
\begin{align*}
\sqrt{\frac {2\nu}{\tau}} \ge \frac 32 \| u^0\|_{\infty}^2.
\end{align*}
To ensure $E_1\le E_0$, we need to check 
\begin{align*}
\sqrt{\frac {2\nu}{\tau}} \ge \frac 32 \| u^1\|_{\infty}^2.
\end{align*}

By Lemma \ref{lem2Dtau1} and Lemma \ref{l_pot1}, we have
\begin{align*}
\|u^{1}\|_{\infty}
\le \alpha_1 (\nu \tau)^{-\frac 18} + 
\alpha_2  \tau (\nu \tau)^{-\frac 78} ,
\end{align*}
where 
\begin{align*}
&\alpha_1= C_1 \sqrt{1+2\sqrt{E_0}};\\
& \alpha_{2}= 2C_2 \sqrt{E_0} \cdot  \sqrt{1+2 \sqrt{E_0}}.
\end{align*}

By Lemma \ref{z2}, to show $E_1\le E_0$, it suffices for us to check the inequality
\begin{align*}
 \Bigl(\frac 2 3\sqrt{\frac {2\nu} {\tau}}\Bigr)^{\frac 12}
 \ge \alpha_1 (\nu \tau)^{-\frac 18} + 
\alpha_2  \tau (\nu \tau)^{-\frac 78}.
\end{align*}
Now set $\tau = \nu^3 r$ and for $r$ we need to check
\begin{align*}
2^{\frac 34} 3^{-\frac 12} \ge \alpha_1 r^{\frac 18} +\alpha_2 r^{\frac 38}.
\end{align*}
Let $r=\alpha_1^4 \alpha_2^{-4} z$. Then we need
\begin{align*}
2^{\frac 34} 3^{-\frac 12} \ge \alpha_1^{\frac 32} \alpha_2^{-\frac 12}
(z^{\frac 18} +z^{\frac 38}).
\end{align*}
Now we choose $\eta\in (0,1)$, such that
\begin{align*}
&z^{\frac 18} \le \eta 2^{\frac 34} 3^{-\frac 12} \alpha_1^{-\frac 32} \alpha_2^{\frac 12};\\
&z^{\frac 38} \le (1-\eta) 2^{\frac 34} 3^{-\frac 12} \alpha_1^{-\frac 32} \alpha_2^{\frac 12}.
\end{align*}
A nearly optimal choice is $\eta=0.690119$ for which
\begin{align*}
&(\eta 2^{\frac 34} 3^{-\frac 12})^8 \approx 0.0406525>0.04;\\
&((1-\eta) 2^{\frac 34} 3^{-\frac 12})^{\frac 83}
\approx 0.0406523>0.04.
\end{align*}
Thus it suffices to require
\begin{align*}
z \le 0.04 \min\{ \alpha_1^{-12}\alpha_2^4,\, \alpha_1^{-4} \alpha_2^{\frac 43}
\}.
\end{align*}
Thus for $\tau$ we need
\begin{align*}
0<\tau\le \tau_{\operatorname{max}}^{(1)}
= 0.04\nu^3 \min\{\alpha_1^{-8},\, \alpha_2^{-\frac 83} \}.
\end{align*}
This in turn guarantees that $E_1 \le E_0$.

Step 2. Induction.  For $n\ge 1$, the induction hypothesis is that 
\begin{align*}
& E_{n} \le E_{n-1}, \\
&\|u^{n}\|_{\infty}
\le \alpha_1 (\nu \tau)^{-\frac 18} + 
\alpha_2  \tau (\nu \tau)^{-\frac 78}.
\end{align*}
Clearly by using similar estimates as in Step 1 for $u^1$, one can check that
\begin{align*}
&\|u^{n+1}\|_{\infty}
\le \alpha_1 (\nu \tau)^{-\frac 18} + 
\alpha_2  \tau (\nu \tau)^{-\frac 78},
\end{align*}
and 
\begin{align*}
\sqrt{\frac {2\nu}{\tau}} \ge \frac 32 \max\{ \|u^n\|_{\infty}^2, \,
\|u^{n+1}\|_{\infty}^2 \}.
\end{align*}
Then by Lemma \ref{z2} we obtain $E_{n+1} \le E_n$ which completes the induction
step. 

\end{proof}

\section{Proof for general $\nu>0$: 1D and 3D  case}
In this section we sketch the needed modifications for the 1D and 3D cases. The 1D
case will be similar to the 2D case. On the other hand the analysis for the 3D case
will be slightly different since $\dot H^1$ is no longer a critical case.

We first consider the 1D case.

 \begin{lem}[1D case] \label{lem700a}
Let $N\ge 2$, $d=1$ and $\nu>0$. Let $\tau>0$. Then for any $g\in L^4(\mathbb T)$ with mean zero,
we have
\begin{align*}
&\| (1+\nu \tau \Delta^2)^{-1}  g \|_{L^{\infty}(\mathbb T)}  \le B_1 (\nu \tau)^{-\frac 1{16}}
\|g\|_{L^4(\mathbb T)}.
\end{align*}
For any $g_1 \in L^{\frac 43}(\mathbb T)$, we have
\begin{align*}
& \| \tau \Delta (1+\nu \tau \Delta^2)^{-1} \Pi_N g_1\|_{L^{\infty}(\mathbb T)} 
\le B_2  \tau (\nu \tau)^{-\frac {11} {16} } \| g_1\|_{L^{\frac 43}(\mathbb T)}.
\end{align*}
In the above $B_1>0$, $B_2>0$ are absolute constants.

\end{lem}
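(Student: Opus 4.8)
The plan is to mirror the two-dimensional argument of Lemma \ref{lem2Dtau1}, changing only the numerology of the exponents to reflect $d=1$; the whole proof then reduces to two applications of Young's convolution inequality together with the kernel bounds already recorded in Lemma \ref{leKbeta}. Throughout I write $\beta=\nu\tau$.

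For the first estimate I would start from the fact that, since $g$ has mean zero, the $k=0$ Fourier mode plays no role, so $(1+\beta\Delta^2)^{-1}g=\widetilde K * g$ with $\widetilde K$ the kernel from Lemma \ref{leKbeta}. Young's inequality in the form $\|\widetilde K * g\|_\infty\le\|\widetilde K\|_{4/3}\|g\|_4$ then reduces everything to an $L^{4/3}$ bound for $\widetilde K$. Specializing the estimate $\|\widetilde K\|_{L^p}\lesssim\beta^{-d(\frac14-\frac1{4p})}$ from Lemma \ref{leKbeta} to $d=1$ and $p=\tfrac43$ produces the exponent $-\tfrac1{16}$, which is exactly the claimed $(\nu\tau)^{-1/16}$.

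For the second estimate I would introduce, exactly as in the 2D proof,
\begin{align*}
K_\beta=\mathcal F^{-1}\Bigl(\frac{\beta^{1/2}(2\pi|k|)^2}{1+\beta(2\pi|k|)^4}\,1_{|k|_\infty\le N}\Bigr),
\end{align*}
and note the multiplier identity $\tau\Delta(1+\beta\Delta^2)^{-1}\Pi_N=-\tau\beta^{-1/2}K_\beta*(\cdot)$. Young's inequality $\|K_\beta*g_1\|_\infty\le\|K_\beta\|_4\|g_1\|_{4/3}$ then leaves only the bound for $\|K_\beta\|_4$. Here I would invoke Hausdorff--Young, $\|K_\beta\|_4\le\|\widehat{K_\beta}\|_{l^{4/3}}$, and estimate the $l^{4/3}$ norm by comparison with the integral $\int_{\mathbb R}\bigl(\tfrac{|\eta|^2}{1+|\eta|^4}\bigr)^{4/3}d\eta$ after the rescaling $\eta=\beta^{1/4}(2\pi k)$, which extracts a factor $\beta^{-3/16}$. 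Collecting $\tau\beta^{-1/2}$ with $\beta^{-3/16}$ gives $\tau\beta^{-11/16}=\tau(\nu\tau)^{-11/16}$, as claimed.

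The only genuinely dimension-sensitive point, and hence where I expect the real work to lie, is the convergence of the rescaled integral: its integrand decays like $|\eta|^{-8/3}$ at infinity, which is integrable precisely when $d<8/3$, so $d=1$ (like $d=2$) is comfortably admissible. This is the same borderline phenomenon that forces the 3D case to be treated separately, since there $\dot H^1$ is no longer critical and a different, non-critical norm must be used for the high-frequency piece. Once convergence is secured the change of variables delivers the $\beta$-power automatically, and the two Young estimates close the argument with absolute constants $B_1,B_2$.
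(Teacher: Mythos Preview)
Your proposal is correct and follows essentially the same route as the paper: the paper's proof is a terse two-line sketch that invokes Lemma~\ref{leKbeta} for the first inequality and the Hausdorff--Young bound $\|K_\beta\|_4\le\|\widehat{K_\beta}\|_{l^{4/3}}\lesssim\beta^{-3/16}$ for the second, and you have supplied exactly the missing details (Young's inequality with the right exponents, the rescaling heuristic for the $l^{4/3}$ norm, and the convergence check). Your observation that the integrand decays like $|\eta|^{-8/3}$, making $d<8/3$ the admissible range, correctly identifies why the same $L^4$--$L^{4/3}$ pairing works in $d=1,2$ but fails in $d=3$.
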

\begin{proof}
Denote $\beta=\nu \tau$.  The first inequality follows from the bound of $\tilde K$ in 
Lemma \ref{leKbeta}. For the second inequality denote (since we are in 1D, $|k|=|k|_{\infty}$)
\begin{align*}
K_{\beta}=\mathcal F^{-1}(
\frac {\beta^{\frac 12} (2\pi |k|)^2} {1+ \beta (2\pi |k|)^4} 1_{|k| \le N} ).
\end{align*}
We then have
\begin{align*}
\| K_{\beta} \|_4 \le \| \widehat{K_{\beta} } \|_{l_k^{\frac 43}} \lesssim
\; \beta^{-\frac 3{16} }.
\end{align*}

\end{proof}
 
 \begin{thm}[Conditional energy stability for 1D $\nu>0$] \label{thm1dnu}
Let $d=1$, $\nu>0$, $N\ge 2$.
Assume $u_0 \in H^1(\mathbb T)$ and
has zero mean. Assume $\| u^0\|_{\infty} =L_0<\infty$. Take
\begin{align*}
\tau_{\operatorname{max}}
= \min\Bigl\{ \frac {8 \nu} {9 L_0^4}, \, 
\tau_{\operatorname{max}}^{(1)} \Bigr\},
\end{align*}
where
\begin{align*}
 \tau_{\operatorname{max}}^{(1)}
= 0.118\nu^{\frac 53}  \min \{ \beta_1^{-\frac {16}3},\, \beta_2^{-\frac{16}9} \},
\end{align*}
and
\begin{align*}
&\beta_1= B_1 \sqrt{1+2\sqrt{E_0}};\\
& \beta_{2}= 2B_2 \sqrt{E_0} \cdot  \sqrt{1+2 \sqrt{E_0}}.
\end{align*}
Here $B_1$, $B_2$ are the same constants in Lemma \ref{lem700a}.
Then for any $0<\tau\le \tau_{\operatorname{max}}$, the scheme
\eqref{semi_e3} is energy stable, i.e.  
\begin{align*}
E(u^{n+1}) \le E(u^n), \qquad\forall\, n\ge 0.
\end{align*}

\end{thm}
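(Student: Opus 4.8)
The plan is to transcribe the two-dimensional argument of Theorem \ref{thm2dnu} line for line, substituting the one-dimensional smoothing estimates of Lemma \ref{lem700a} for those of Lemma \ref{lem2Dtau1} and re-running the scaling bookkeeping with the new fractional exponents. I would induct on $n$, carrying the two hypotheses $E_n\le E_{n-1}$ and
\[
\|u^n\|_{\infty}\le \beta_1(\nu\tau)^{-\frac{1}{16}}+\beta_2\,\tau(\nu\tau)^{-\frac{11}{16}}.
\]
For the base case the choice $\tau\le 8\nu/(9L_0^4)$ forces $\sqrt{2\nu/\tau}\ge \tfrac32 L_0^2=\tfrac32\|u^0\|_{\infty}^2$, so by the discrete energy estimate Lemma \ref{z2} the inequality $E_1\le E_0$ reduces to the single condition $\sqrt{2\nu/\tau}\ge \tfrac32\|u^1\|_{\infty}^2$.

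For the trade-energy-for-$L^\infty$ step I would write \eqref{semi_e3} in resolvent form
\[
u^{n+1}=(1+\nu\tau\Delta^2)^{-1}u^n+\tau\Delta(1+\nu\tau\Delta^2)^{-1}\Pi_N f(u^n),
\]
apply the homogeneous bound of Lemma \ref{lem700a} to the first term (with $g=u^n$, which has mean zero) and the inhomogeneous bound to the second (with $g_1=(u^n)^3-u^n$), and control $\|u^n\|_4$ and $\|(u^n)^3-u^n\|_{4/3}$ through Lemma \ref{l_pot1}, using $\int\frac14((u^n)^2-1)^2\,dx\le E_n\le E_0$. This produces exactly the bound in the induction hypothesis with $\beta_1=B_1\sqrt{1+2\sqrt{E_0}}$ and $\beta_2=2B_2\sqrt{E_0}\sqrt{1+2\sqrt{E_0}}$. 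To close, I would feed this into $\sqrt{2\nu/\tau}\ge \tfrac32\|u^{n+1}\|_{\infty}^2$, rewritten as $(\tfrac23\sqrt{2\nu/\tau})^{1/2}\ge\|u^{n+1}\|_{\infty}$, and set $\tau=\nu^{5/3}r$. A direct check shows every power of $\nu$ collapses to $\nu^{-1/6}$, so the inequality becomes the $\nu$-free statement
\[
2^{\frac34}3^{-\frac12}\ge \beta_1 r^{\frac{3}{16}}+\beta_2 r^{\frac{9}{16}}.
\]
Splitting the budget with a parameter $\eta\in(0,1)$, i.e. demanding $\beta_1 r^{3/16}\le\eta\,2^{3/4}3^{-1/2}$ and $\beta_2 r^{9/16}\le(1-\eta)2^{3/4}3^{-1/2}$, and choosing $\eta\approx 0.69$ so that both resulting prefactors exceed $0.118$, yields $r\le 0.118\min\{\beta_1^{-16/3},\beta_2^{-16/9}\}$ and hence the claimed $\tau_{\operatorname{max}}^{(1)}$.

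The genuinely new work is only the exponent bookkeeping, since the sharp analytic input is already isolated in Lemma \ref{lem700a}; the delicate points are twofold. First, one must verify that $5/3$ is the \emph{forced} scaling: it is the unique $a$ in $\tau=\nu^a r$ for which both right-hand terms carry the same $\nu$-power as the left-hand $(\nu/\tau)^{1/4}$, and any other choice leaves a residual power of $\nu$ that destroys the scale-free reduction. Second, obtaining the explicit constant $0.118$ requires the two-parameter split in $\eta$ to simultaneously absorb the mismatched exponents $3/16$ and $9/16$; this is the same balancing performed in the 2D proof but with a narrower admissible window for $\eta$, so the numerology should be confirmed rather than merely quoted.
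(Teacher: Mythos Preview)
Your proposal is correct and follows essentially the same approach as the paper: the same induction scheme, the same resolvent splitting with Lemma~\ref{lem700a} and Lemma~\ref{l_pot1}, the same substitution $\tau=\nu^{5/3}r$ reducing to $2^{3/4}3^{-1/2}\ge \beta_1 r^{3/16}+\beta_2 r^{9/16}$, and the same $\eta$-splitting (the paper in fact uses the identical $\eta\approx 0.690119$ as in the 2D proof, so the window is not narrower). The only cosmetic difference is that the paper routes the final optimization through an auxiliary variable $z$ with $r\le z\min\{\beta_1^{-16/3},\beta_2^{-16/9}\}$ before imposing the two constraints on $z$, which is equivalent to your direct split.
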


 \begin{proof}
 The induction procedure is similar to that in the proof of Theorem \ref{thm2dnu} and
 therefore we shall only sketch the needed modifications. The main inequality to
 verify is 
 \begin{align*}
 \Bigl(\frac 2 3\sqrt{\frac {2\nu} {\tau}}\Bigr)^{\frac 12}
 \ge \; \max\{ \|u^n\|_{\infty}, \, \|u^{n+1}\|_{\infty} \}.
 \end{align*}
The estimate of $\|u^n\|_{\infty}$ uses induction hypothesis. For $u^{n+1}$ we use
Lemma \ref{lem700a} and this gives
\begin{align*}
\|u^{n+1}\|_{\infty} & \le B_1 (\nu \tau)^{-\frac 1{16}}
\|u^n\|_{L^4(\mathbb T)} +B_2  \tau (\nu \tau)^{-\frac {11} {16} } \| f(u^n)\|_{L^{\frac 43}(\mathbb T)} \notag \\
& \le \beta_1 (\nu \tau)^{-\frac 1{16}}+\beta_2  \tau (\nu \tau)^{-\frac {11} {16} },
\end{align*}
 where in the second inequality we used  Lemma \ref{l_pot1}, and
 \begin{align*}
&\beta_1= B_1 \sqrt{1+2\sqrt{E_0}};\\
& \beta_{2}= 2B_2 \sqrt{E_0} \cdot  \sqrt{1+2 \sqrt{E_0}}.
\end{align*}

Set $\tau= \nu^{\frac 53} r$, and for $r$ we need to check the inequality
\begin{align*}
2^{\frac 34} 3^{-\frac 12} \ge \beta_1 r^{\frac 3{16}} + \beta_2 r^{\frac 9 {16}}.
\end{align*}
 We shall choose $r$ such that 
 \begin{align*}
 r \le z\cdot \min \{ \beta_1^{-\frac {16}3},\, \beta_2^{-\frac{16}9} \},
 \end{align*}
 and 
 \begin{align*}
 &z^{\frac 3 {16}} \le \, \eta 2^{\frac 34} 3^{-\frac 12}, \\
 & z^{\frac 9 {16}} \le \, (1-\eta) 2^{\frac 34} 3^{-\frac 12}.
 \end{align*}
 A nearly optimal choice is $\eta=0.690119$ for which
\begin{align*}
&(\eta 2^{\frac 34} 3^{-\frac 12})^{\frac {16}3} \approx 0.118229>0.118;\\
&((1-\eta) 2^{\frac 34} 3^{-\frac 12})^{\frac {16} 9}
\approx 0.118229>0.118.
\end{align*}
Thus it suffices to require
\begin{align*}
r \le 0.118 \min \{ \beta_1^{-\frac {16}3},\, \beta_2^{-\frac{16}9} \}.
\end{align*}
 
 \end{proof}

 The next lemma is for the 3D case. Note that the argument is slightly different
  from 2D and in some sense simpler.
 \begin{lem}[3D case] \label{l800b}
 Let $N\ge 2$, $d=3$, $\nu>0$ and $\tau>0$. 
Assume $g\in H^1(\mathbb T^3)$ has mean zero on $\mathbb T^3$. Then
\begin{align*}
&\| (1+\nu \tau \Delta^2)^{-1} g\|_{L^{\infty}(\mathbb T^3)}
 \le B_3 (\nu \tau)^{-\frac 18}
\|g\|_{\dot H^1(\mathbb T^3)}. 
\end{align*}
Let $N\ge 2$.  For any $g_1\in L^2(\mathbb T^3) $, we have
\begin{align*}
& \| \tau \Delta (1+\nu \tau \Delta^2)^{-1} \Pi_N g_1\|_{L^{\infty}(\mathbb T^3)}\le B_4  \tau (\nu \tau)^{-\frac 78} \| g_1\|_{L^2(\mathbb T^3)}.
\end{align*}
In the above $B_3>0$, $B_4>0$ are absolute constants. Also we have
\begin{align*}
& \| \tau \Delta (1+\nu \tau \Delta^2)^{-1} \Pi_N g_1\|_{L^{\infty}(\mathbb T^3)}\le B_4  \tau (\nu \tau)^{-\frac 78} \| g_1 -\overline {g_1}\|_{L^2(\mathbb T^3)},
\end{align*}
where $\overline{g_1}$ denotes the average of $g_1$ on $\mathbb T^3$.
\end{lem}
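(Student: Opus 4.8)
The plan is to treat all three inequalities as $L^\infty$ bounds for Fourier multiplier operators, converted into convolution estimates via Young's inequality, exactly in the spirit of the $1$D and $2$D proofs (Lemmas \ref{lem700a} and \ref{lem2Dtau1}). Throughout I write $\beta=\nu\tau$. The structural point, already flagged before the lemma, is that in dimension three $\dot H^1$ is strictly subcritical, so the homogeneous term can be paired against $\|g\|_{\dot H^1}$ through the critical Sobolev embedding $\dot H^1(\mathbb T^3)\hookrightarrow L^6(\mathbb T^3)$ (for mean-zero functions, $\|g\|_6\lesssim \|g\|_{\dot H^1}$) with no logarithmic loss in $\nu$, unlike the $2$D situation.

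For the first inequality I would note that, since $g$ has mean zero, $(1+\nu\tau\Delta^2)^{-1}g=\widetilde K * g$ with $\widetilde K$ the kernel of Lemma \ref{leKbeta}. Young's inequality $\|\widetilde K * g\|_{L^\infty}\le \|\widetilde K\|_{L^{6/5}}\|g\|_{L^6}$ then reduces matters to two known facts: the Sobolev embedding above, and the bound from Lemma \ref{leKbeta} applied with $d=3$ and $p=\tfrac65$, which gives $\|\widetilde K\|_{L^{6/5}}\lesssim \beta^{-3(\frac14-\frac{5}{24})}=\beta^{-\frac18}=(\nu\tau)^{-\frac18}$. Combining the two yields the claimed $(\nu\tau)^{-\frac18}$ factor. (Equivalently, one may bound the $L^2\to L^\infty$ operator norm of $|\nabla|^{-1}(1+\nu\tau\Delta^2)^{-1}$ by the $l^2_k$ norm of its symbol $\tfrac{1}{2\pi|k|(1+\beta(2\pi|k|)^4)}$, whose square scales like $\beta^{-1/4}$; I would use whichever presentation the earlier sections prefer.)

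For the second inequality I would reuse the kernel $K_\beta=\mathcal F^{-1}\big(\tfrac{\beta^{1/2}(2\pi|k|)^2}{1+\beta(2\pi|k|)^4}1_{|k|_\infty\le N}\big)$ introduced in the lower-dimensional proofs, so that $\tau\Delta(1+\nu\tau\Delta^2)^{-1}\Pi_N=\mp\,\tau\beta^{-1/2}K_\beta *$. Since $g_1\in L^2$, I would now invoke the endpoint Young inequality $\|K_\beta * g_1\|_{L^\infty}\le \|K_\beta\|_{L^2}\|g_1\|_{L^2}$, and compute $\|K_\beta\|_{L^2}$ by Plancherel as the $l^2_k$ norm of its symbol, i.e. $\|K_\beta\|_{L^2}^2=\beta\sum_{k\ne0}\tfrac{(2\pi|k|)^4}{(1+\beta(2\pi|k|)^4)^2}\lesssim \beta\cdot\beta^{-7/4}=\beta^{-3/4}$. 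This gives $\|K_\beta\|_{L^2}\lesssim\beta^{-3/8}$, and multiplying by the prefactor $\tau\beta^{-1/2}$ produces exactly $\tau\beta^{-7/8}=\tau(\nu\tau)^{-7/8}$. The third inequality is then immediate: because $\Delta$ kills constants, $\Delta\Pi_N g_1=\Delta\Pi_N(g_1-\overline{g_1})$, so the $k=0$ mode never enters and $\|g_1\|_{L^2}$ may be replaced by $\|g_1-\overline{g_1}\|_{L^2}$.

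The main obstacle, and the only genuinely computational step, is the uniform-in-$\beta$ estimate $\sum_{k\ne0}\tfrac{(2\pi|k|)^4}{(1+\beta(2\pi|k|)^4)^2}\lesssim \beta^{-7/4}$ (and the analogous $\beta^{-1/4}$ bound behind the first inequality), with absolute constants independent of $N$. I would establish this by the same two-regime argument used in Lemma \ref{m400}, now with the three-dimensional lattice-to-integral comparison. When the cutoff $\rho_*\sim\beta^{-1/4}$ exceeds the lattice spacing (i.e. $\beta\lesssim1$), the low frequencies $0<|k|\lesssim\rho_*$ contribute $\sum |k|^4\sim\rho_*^{7}\sim\beta^{-7/4}$ while the high frequencies, where $\beta(2\pi|k|)^4\ge1$ forces the extra decay $(1+\beta(2\pi|k|)^4)^{-2}\le\beta^{-2}(2\pi|k|)^{-8}$, contribute $\beta^{-2}\sum_{|k|>\rho_*}|k|^{-4}\sim\beta^{-2}\rho_*^{-1}\sim\beta^{-7/4}$; the clean $\rho^6/(1+\beta\rho^4)^2$ radial integral in $\mathbb R^3$ confirms the exponent by scaling $s=\beta^{1/4}\rho$. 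For $\beta\gtrsim1$ every nonzero mode already satisfies $\beta(2\pi|k|)^4\ge1$, so the quartic denominator renders the whole sum $\lesssim\beta^{-2}\lesssim\beta^{-7/4}$. Matching these regimes, together with the elementary convergence of $\int_0^\infty s^6(1+s^4)^{-2}\,ds$ and $\int_0^\infty(1+s^4)^{-2}\,ds$, is where all the care lies; everything else is a direct application of Young's inequality, Plancherel, and the already-established Lemma \ref{leKbeta}.
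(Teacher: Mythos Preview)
Your proposal is correct and matches the paper's proof almost exactly. The paper proceeds directly with the $l^2$ symbol bounds you mention parenthetically: for the first inequality it checks $\bigl\|\tfrac{1}{(1+\beta(2\pi|k|)^4)\,2\pi|k|}\bigr\|_{l^2_k(k\ne 0)}\lesssim\beta^{-1/8}$, and for the second it checks $\bigl\|\tfrac{\beta^{1/2}(2\pi|k|)^2}{1+\beta(2\pi|k|)^4}\bigr\|_{l^2_k}\lesssim\beta^{-3/8}$, each via the same two-regime split $|k|\lessgtr\beta^{-1/4}$ you describe; your primary route for the first inequality through $\|\widetilde K\|_{L^{6/5}}$ and the Sobolev embedding $\dot H^1\hookrightarrow L^6$ is a valid equivalent, but the paper skips it in favor of the pure Fourier-side computation.
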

\begin{rem*}
The second estimate also holds when $\Pi_N$ is not present.
\end{rem*}
\begin{rem*}
Compared with 2D, here the argument is slightly simpler since $\dot H^1$ is no longer
a critical space (for $L^{\infty}$) we can make use of $L^2$ techniques.
\end{rem*}
\begin{rem*}
For the inhomogeneous estimate, one should note that 
\begin{align*}
\| \Delta (1+\Delta^2)^{-1} \delta_0 \|_{L^4 (\mathbb T^3)}=\infty
\end{align*}
and this is why we have to proceed differently from the 2D case.
\end{rem*}

\begin{rem*}
In the proof below we do not consider the refined bounds for $\beta=\nu \tau>1$ since one
is primarily interested in the case $0<\nu \ll 1$ and $0<\tau \lesssim 1$.
\end{rem*}
\begin{proof}
Denote $\beta=\nu \tau$. For the first inequality it suffices to check that
\begin{align*}
\bigl( \frac 1 {1+\beta (2\pi |k|)^4} \cdot \frac 1 {2\pi |k|} \bigr)_{l_k^2(0\ne k 
\in \mathbb Z^3)} \lesssim \; \beta^{-\frac 18}.
\end{align*}
If $\beta\ge 1$ the inequality is obvious since we have the stronger bound $\beta^{-1}$ in this case.
If $0<\beta<1$, one can then split into regimes $|k| \le \beta^{-\frac 14}$ and $|k|>\beta^{-\frac 14}$ and estimate separately the contributions. The bound $\beta^{-\frac 18}$ is then
immediate. Note that we can even calculate explicit constants here but we shall not dwell on this
issue here.

The proof of the second inequality is similar. We use
\begin{align*}
\Bigl( \frac {\beta^{\frac 12} (2\pi |k|)^2} { 1+ \beta (2\pi |k|)^4} \Bigr)_{l_k^2
(\mathbb Z^3)} \lesssim\; \beta^{-\frac 38}.
\end{align*}

\end{proof}

\begin{thm}[Conditional energy stability for 3D $\nu>0$] \label{thm3dnu}
Let $d=3$, $\nu>0$, $N\ge 2$.
 Assume $u_0 \in H^1(\mathbb T^3)$ and
has zero mean. Assume $\| u^0\|_{\infty} =L_0<\infty$. Take
\begin{align*}
\tau_{\operatorname{max}}
= \min\Bigl\{ \frac {8 \nu} {9 L_0^4}, \, 
\tau_{\operatorname{max}}^{(1)} \Bigr\}.
\end{align*}
 Here
 \begin{align*}
\tau_{\operatorname{max}}^{(1)}
=0.0007 \nu^7 \min \{ \beta_3^{-8},\, \beta_4^{-\frac{8}3},\, \beta_5^{-8} \},
\end{align*}
where 
\begin{align*}
&\beta_3= B_3^{\prime} \sqrt{E_0};\quad \beta_4 = B_4^{\prime} E_0^{\frac 32}; \\
& \beta_{5}= B_5^{\prime} (1+E_0)^{\frac 14},
\end{align*}
and $B_3^{\prime}>0$, $B_4^{\prime}>0$, $B_5^{\prime}>0$ are some
absolute constants.
Then for any $0<\tau\le \tau_{\operatorname{max}}$, the scheme
\eqref{semi_e3} is energy stable, i.e.  
\begin{align*}
E(u^{n+1}) \le E(u^n), \qquad\forall\, n\ge 0.
\end{align*}

\end{thm}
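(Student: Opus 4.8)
The plan is to run the same induction used in Theorems \ref{thm2dnu} and \ref{thm1dnu}, substituting the 3D smoothing bounds of Lemma \ref{l800b} for the 2D ones. By the discrete energy estimate Lemma \ref{z2}, to guarantee $E_{n+1}\le E_n$ it suffices to verify $\sqrt{2\nu/\tau}\ge \frac32\max\{\|u^n\|_\infty^2,\|u^{n+1}\|_\infty^2\}$, equivalently $(\frac23\sqrt{2\nu/\tau})^{1/2}\ge \max\{\|u^n\|_\infty,\|u^{n+1}\|_\infty\}$. The base case is immediate from the constraint $\tau\le 8\nu/(9L_0^4)$, which forces $\sqrt{2\nu/\tau}\ge \frac32 L_0^2\ge \frac32\|u^0\|_\infty^2$. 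The induction hypothesis will be $E_n\le E_{n-1}$ together with an $L^\infty$ bound on $u^n$ of the same shape as the one I now derive for $u^{n+1}$.

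The core is a direct $L^\infty$ estimate on $u^{n+1}$. Writing the scheme as $u^{n+1}=(1+\nu\tau\Delta^2)^{-1}u^n+\tau\Delta(1+\nu\tau\Delta^2)^{-1}\Pi_N f(u^n)$ with $f(u^n)=(u^n)^3-u^n$, I bound the homogeneous piece by the first (the $\dot H^1$) estimate of Lemma \ref{l800b} and the nonlinear piece by the second (the $L^2$) estimate of Lemma \ref{l800b}. For the input norms I use energy control $\|u^n\|_{\dot H^1}\le \sqrt{2E_n/\nu}\le \sqrt{2E_0/\nu}$; the 3D Sobolev embedding $\dot H^1(\mathbb T^3)\hookrightarrow L^6(\mathbb T^3)$ to get $\|(u^n)^3\|_2=\|u^n\|_6^3\lesssim \|u^n\|_{\dot H^1}^3\lesssim (E_0/\nu)^{3/2}$; and Lemma \ref{l_pot1} for $\|u^n\|_2\le \|u^n\|_4\lesssim (1+E_0)^{1/4}$. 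Collecting these, and absorbing the $\Delta$-annihilated mean into the $L^2$ norms, yields a three-term bound $\|u^{n+1}\|_\infty\lesssim \beta_3(\nu\tau)^{-1/8}\nu^{-1/2}+\beta_4\,\tau(\nu\tau)^{-7/8}\nu^{-3/2}+\beta_5\,\tau(\nu\tau)^{-7/8}$, with $\beta_3,\beta_4,\beta_5$ as in the statement.

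It remains to solve the resulting scalar inequality $(\frac23\sqrt{2\nu/\tau})^{1/2}\ge \|u^{n+1}\|_\infty$. Following the device of Theorems \ref{thm2dnu} and \ref{thm1dnu}, I substitute $\tau=\nu^7 r$, which renders everything $\nu$-free except for a harmless factor $\nu^{3/2}\le 1$ on the linear term (using $0<\nu\le 1$; the regime $\nu>1$ is not of interest, cf.\ the remark after Lemma \ref{l800b}). The inequality reduces to $C_0\ge \beta_3' r^{1/8}+\beta_4' r^{3/8}+\beta_5' r^{3/8}$, which I split with a balancing parameter $\eta$ exactly as before: requiring each summand to be at most a fixed fraction of $C_0$ produces the three constraints $r\lesssim \beta_3^{-8}$, $r\lesssim \beta_4^{-8/3}$, and (a sufficient bound for the $\nu^{3/2}$-damped linear term) $r\lesssim \beta_5^{-8}$, hence $\tau_{\operatorname{max}}^{(1)}=0.0007\,\nu^7\min\{\beta_3^{-8},\beta_4^{-8/3},\beta_5^{-8}\}$. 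A routine check that $u^{n+1}$ then satisfies the same $L^\infty$ bound as $u^n$ closes the induction.

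The main obstacle is the $L^\infty$ estimate of the second paragraph, and in particular the cubic term. In 3D the space $\dot H^1$ is no longer critical for $L^\infty$, so unlike the 2D argument one cannot use the $L^4/L^{4/3}$ duality; indeed $\|\Delta(1+\Delta^2)^{-1}\delta_0\|_{L^4(\mathbb T^3)}=\infty$, which forces routing the nonlinearity through the $L^2$-smoothing bound of Lemma \ref{l800b} and paying for $\|(u^n)^3\|_2$ via $\dot H^1\hookrightarrow L^6$. This Sobolev step is exactly what injects the worst power $\nu^{-3/2}$ into the cubic contribution and pins down the $\nu^7$ scaling; the homogeneous term carries the same scaling, while the linear term is subdominant for small $\nu$ and is only mildly overestimated by the $\beta_5^{-8}$ bound.
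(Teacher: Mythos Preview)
Your strategy matches the paper's almost exactly: induction via Lemma \ref{z2}, Lemma \ref{l800b} for $\|u^{n+1}\|_\infty$, the $\dot H^1\hookrightarrow L^6$ embedding for the cubic, and the substitution $\tau=\nu^7 r$ to isolate the $\nu$-dependence. The one substantive discrepancy is how you dispatch the $\beta_5$ term, and there it leaves a gap relative to the theorem as stated.

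You discard the additive $\tfrac12$ from Lemma \ref{z2} and work with $\bigl(\tfrac23\sqrt{2\nu/\tau}\bigr)^{1/2}\ge\|u^{n+1}\|_\infty$. After substituting $\tau=\nu^7 r$ and clearing $\nu^{3/2} r^{1/4}$, the LHS becomes the constant $(2\sqrt2/3)^{1/2}$ while the RHS retains the term $\beta_5\, r^{3/8}\nu^{3/2}$. You then invoke $\nu\le 1$ to bound this by $\beta_5 r^{3/8}$. But the theorem is stated for all $\nu>0$, and the remark you cite after Lemma \ref{l800b} concerns $\beta=\nu\tau$, not $\nu$ itself; so as written your argument does not cover $\nu>1$. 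Moreover, even granting $\nu\le 1$, a three-way split naturally produces the constraint $r\lesssim \beta_5^{-8/3}$ (from $r^{3/8}$), not $\beta_5^{-8}$; calling the latter ``a sufficient bound'' is correct only if $\beta_5\ge 1$, which is not ensured for arbitrary absolute $B_5'$ and arbitrary $E_0$.

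The paper's fix is short and worth absorbing: it \emph{keeps} the $\tfrac12$ from Lemma \ref{z2}, so that after the same substitution and rescaling the inequality to verify reads
\[
\Bigl(\tfrac13\,\nu^3 r^{1/2}+\tfrac23\sqrt2\Bigr)^{1/2}\ \ge\ \beta_3\, r^{1/8}+\beta_4\, r^{3/8}+\beta_5\, r^{3/8}\nu^{3/2}.
\]
Using $\sqrt{a+b}\ge(\sqrt a+\sqrt b)/\sqrt2$, the LHS is split and the piece $(\tfrac16\nu^3 r^{1/2})^{1/2}=6^{-1/2}\nu^{3/2}r^{1/4}$ is matched against $\beta_5\, r^{3/8}\nu^{3/2}$. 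The $\nu^{3/2}$ cancels exactly, yielding the $\nu$-free constraint $r\le 6^{-4}\beta_5^{-8}$ (this is where both $0.0007$ and the exponent $-8$ on $\beta_5$ come from). The remaining constant piece $(\sqrt2/3)^{1/2}$ handles $\beta_3 r^{1/8}+\beta_4 r^{3/8}$ by the two-parameter $\eta$-optimization you describe. In short, the $\tfrac12$ in Lemma \ref{z2} is not disposable in 3D: it is exactly what delivers the $\beta_5^{-8}$ constraint uniformly in $\nu>0$.
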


 \begin{proof}
 The induction procedure is similar to that in the proof of Theorem \ref{thm2dnu} and
 therefore we shall only sketch the needed modifications. By Lemma \ref{z2}, the main inequality to
 verify is 
 \begin{align*}
 \Bigl(\frac 13+\frac 2 3\sqrt{\frac {2\nu} {\tau}}\Bigr)^{\frac 12}
 \ge \; \max\{ \|u^n\|_{\infty}, \, \|u^{n+1}\|_{\infty} \}.
 \end{align*}
The estimate of $\|u^n\|_{\infty}$ uses induction hypothesis. For $u^{n+1}$ we use
Lemma \ref{l800b} and this gives
\begin{align*}
\|u^{n+1}\|_{\infty} & \le B_3 (\nu \tau)^{-\frac 1{8}}
\| \nabla u^n\|_{L^2(\mathbb T^3)} +B_2  \tau (\nu \tau)^{-\frac {7} {8} } \| f(u^n)
\|_{L^{2}(\mathbb T^3)} \notag \\
& \le \beta_3  \nu^{-\frac 12} (\nu \tau)^{-\frac 1{8}}+\beta_4  
\tau (\nu \tau)^{-\frac {7} {8} } \nu^{-\frac 32} +\beta_5 
\tau (\nu \tau)^{-\frac {7} {8} },
\end{align*}
 where in the second inequality we used the fact that
 \begin{align*}
 &\| (u^n)^3 \|_{L^2(\mathbb T^3)} \lesssim \| \nabla u^n \|_{L^2(\mathbb T^3)}^3
 \lesssim \nu^{-\frac 32} E_0^{\frac 32}; \\
 &\|u^n \|_{L^2(\mathbb T^3)} \lesssim 
 (1+E_0)^{\frac 14},
 \end{align*}
and for some absolute constant $\tilde B_4>0$, $B_5>0$, 
 \begin{align*}
&\beta_3= B_3 \sqrt{2 E_0};\quad \beta_4 = \tilde B_4 E_0^{\frac 32}; \\
& \beta_{5}= B_5 (1+E_0)^{\frac 14}.
\end{align*}

Set $\tau= \nu^{7} r$, and for $r$ we need to check the inequality
\begin{align*}
(\frac 13 \nu^3 r^{\frac 12}+ \frac 23 \sqrt{ 2} )^{\frac 12} \ge \beta_3 r^{\frac 1{8}} + \beta_4 r^{\frac 3 {8}}
+\beta_5 r^{\frac 38} \nu^{\frac 32}.
\end{align*}

Now by using the inequality
\begin{align*}
\sqrt{a+b} \ge \frac {\sqrt a +\sqrt b} {\sqrt 2}, \qquad \forall\, a,\, b\ge 0,
\end{align*}
it suffices for us to choose $r$ such that
\begin{align*}
&(  \frac {\sqrt 2} 3 )^{\frac 12} \ge \beta_3 r^{\frac 1{8}} + \beta_4 r^{\frac 3 {8}}; 
\\
&(\frac 16 \nu^3 r^{\frac 12} )^{\frac 12} \ge \beta_5 r^{\frac 38} \nu^{\frac 32}.
\end{align*}
The second inequality requires that
\begin{align*}
r \le 6^{-4} \beta_5^{-8}.
\end{align*}
Note that $6^{-4} \approx 0.000771605>0.0007$.
For the first inequality we shall choose $r$ such that 
 \begin{align*}
 r \le z\cdot \min \{ \beta_3^{-8},\, \beta_4^{-\frac{8}3} \},
 \end{align*}
 and 
 \begin{align*}
 &z^{\frac 1 {8}} \le \, \eta 2^{\frac 14} 3^{-\frac 12}, \\
 & z^{\frac 3 {8}} \le \, (1-\eta) 2^{\frac 14} 3^{-\frac 12}.
 \end{align*}
 A nearly optimal choice is $\eta=0.778006$ for which
\begin{align*}
&(\eta 2^{\frac 14} 3^{-\frac 12})^{8} \approx 0.0066288>0.006;\\
&((1-\eta) 2^{\frac 14} 3^{-\frac 12})^{\frac 83}
\approx 0.0066288>0.006.
\end{align*}
Thus it suffices to require
\begin{align*}
r \le 0.0007 \min \{ \beta_3^{-8},\, \beta_4^{-\frac{8}3},\, \beta_5^{-8} \}.
\end{align*}
 
 \end{proof}

\section{Concluding remarks}
Implicit-Explicit (IMEX) methods can simulate efficiently many phase field models such
such as the Cahn-Hilliard equation or thin film type equations. 
Compared with pure 
explicit methods, IMEX is more stable with larger allowable time steps
 whilst being efficient and accurate. In contrast with implicit methods and
partially implicit methods, 
IMEX  does not require solving a nonlinear system at each time step and is much more
efficient. In numerical experiments  IMEX is often observed to be energy stable provided
the time step is not taken too large. Due to the difficulties caused by the lack of maximum
principle and stiffness caused by the effect of small viscosity,
 the rigorous stability analysis of IMEX methods was a long standing open 
problem.
In this work we analyzed a model IMEX scheme introduced by Chen and Shen
\cite{CS98} for the Cahn-Hilliard equation and gave 
a first rigorous proof of conditional energy stability with mild time step constraints.
Our analysis does not rely on adding additional stabilization terms, truncating the nonlinearity or introducing auxiliary variables. 
To deal with the aforementioned difficulties caused by the lack of maximum
principle and stiffness  of small viscosity, we introduce
a  Trade-Energy-For-$L^{\infty}$ (TEFL) method  which is a refinement of our earlier work
\cite{LQT17, LQ17, LQ173d}.  In the course of the proof we  computed explicitly (and nearly optimal
in terms of energy scaling) time step constraints 
in several model cases which seem to be the first done
in the literature. All these developments are pivotal for future refined analysis on these algorithms.
Our theoretical analysis  shows that IMEX is a robust algorithm  for large scale
and long-time simulations, 
due to its simplicity and guaranteed conditional
energy stability with affordable time step constraints. 
It is expected that this new streamlined TEFL proof can be further refined and 
adapted to higher order cases
and generalized to many other phase field models and settings. 

\frenchspacing
\bibliographystyle{plain}

\end{document}